\numberwithin{equation}{section}
\newcommand{\qed}{\hfill \ensuremath{\Box}}
\def\XXint#1#2#3{{\setbox0=\hbox{$#1{#2#3}{\int}$}
\vcenter{\hbox{$#2#3$}}\kern-.5\wd0}}
\newcommand{\tr}[2]{\textrm{tr}_{#1} \, {#2}}
\newcommand{\ve}{\varepsilon}
\newcommand{\Oorb}{\Omega_{\textrm{orb}}}
\newcommand{\oorb}{\omega_{\textrm{orb}}}
\newcommand{\ofs}{\omega_{\textrm{FS}}}
\newcommand{\osm}{\omega_{\infty}}
\newcommand{\dbar}{\overline{\partial}}
\newcommand{\ddt}[1]{\frac{\partial #1}{\partial t}}
\newcommand{\ov}[1]{\overline{#1}}
\newcommand{\un}[1]{\underline{#1}}
\newcommand{\Xc}{X_{\textrm{can}}}
\newcommand{\geu}{g_{\textrm{Eucl}}}
\newcommand{\Cone}{\mathcal{C}}
\newcommand{\oX}{\omega_X}
\newcommand{\dds}{\frac{d}{ds}}
\newcommand{\emax}{\eta_{\textrm{max}}}
\newcommand{\diam}{\mathrm{diam}}
\newcommand{\ddbar}{\frac{\sqrt{-1}}{2\pi} \partial\dbar}
\newcommand{\oeuc}{\omega_{\textrm{Eucl}}}
\begin{document}
\newcounter{remark}
\newcounter{theor}
\setcounter{remark}{0} \setcounter{theor}{1}
\newtheorem{claim}{Claim}
\newtheorem{theorem}{Theorem}[section]
\newtheorem{proposition}{Proposition}[section]
\newtheorem{lemma}{Lemma}[section]
\newtheorem{definition}{Definition}[section]
\newtheorem{conjecture}{Conjecture}[section]
\newtheorem{corollary}{Corollary}[section]
\newenvironment{proof}[1][Proof]{\begin{trivlist}
\item[\hskip \labelsep {\bfseries #1}]}{\end{trivlist}}
\newenvironment{remark}[1][Remark]{\addtocounter{remark}{1} \begin{trivlist}
\item[\hskip \labelsep {\bfseries #1
\thesection.\theremark}]}{\end{trivlist}}
\newenvironment{example}[1][Example]{\addtocounter{remark}{1} \begin{trivlist}
\item[\hskip \labelsep {\bfseries #1
\thesection.\theremark}]}{\end{trivlist}}
~

\centerline{\bf \Large Contracting exceptional divisors by the K\"ahler-Ricci flow II\footnote{The first-named author is supported in part by an NSF CAREER grant 
  DMS-08-47524  and the second-named author by the grant DMS-08-48193.  Both authors are also supported in part by Sloan Research Fellowships. }}

\bigskip
\bigskip

\centerline{\large \bf Jian Song$^{*}$ and
Ben Weinkove$^\dagger$}


\bigskip
\bigskip
\noindent
{\bf Abstract} \  
We investigate the case of the K\"ahler-Ricci flow blowing down disjoint
 exceptional divisors with normal bundle $\mathcal{O}(-k)$ to orbifold points.  We prove smooth convergence outside the exceptional divisors and global Gromov-Hausdorff convergence.   In addition, we establish the result that the Gromov-Hausdorff limit coincides with the metric completion of the limiting metric under the flow.  This improves and extends the previous work of the authors.   We apply this to $\mathbb{P}^1$-bundles which are higher-dimensional analogues of the Hirzebruch surfaces.  We also consider the case of a minimal surface of general type with only distinct irreducible $(-2)$-curves and show that solutions to  the normalized K\"ahler-Ricci flow converge in the  Gromov-Hausdorff sense to a K\"ahler-Einstein orbifold.



\bigskip
\bigskip


\section{Introduction}

The K\"ahler-Ricci flow, suitably normalized, converges to a K\"ahler-Einstein metric on a compact manifold $X$ with negative or zero first Chern class \cite{Y1, A, Cao}.  It was shown in \cite{P2, TZhu} that on a manifold with positive first Chern class the flow converges to a K\"ahler-Einstein metric when one exists.  Convergence is also known under certain weaker assumptions (see \cite{CW, MS, PS, PSSW, Sz, To, Zhu} and the discussions therein).  

There has been considerable interest in investigating the behavior of the K\"ahler-Ricci flow on more general algebraic varieties where it is expected that singularities will form \cite{FIK, LT, Ts, So, SoT1, SoT2, SoT3, SW1, SW2, T, TZha, Zha1, Zha2}.  One motivation is to understand how the Ricci flow can pass through a singularity and continue on a new manifold.  Such a procedure is sometimes referred to as \emph{canonical surgery}  \cite{H, P1}.  The rigid framework of K\"ahler geometry provides a good model for the study of singularity formation for the general Ricci flow.  Another motivation comes from algebraic geometry and the minimal model program.  It has been conjectured that the K\"ahler-Ricci flow will give a new analytic analogue of the Mori program of classifying algebraic varieties up to birational equivalence \cite{SoT3, T}. 

Indeed, it was conjectured in \cite{SoT3} that the K\"ahler-Ricci flow will either deform a projective algebraic variety $X$ to its minimal model via finitely many divisorial contractions and flips in the Gromov-Hausdorff sense, and then converge (after normalization) to a generalized K\"ahler-Einstein metric on the canonical model of $X$, or collapse in finite time.   The existence of weak solutions of the K\"ahler-Ricci flow through divisorial contractions and flips was proved in \cite{SoT3}.  The Gromov-Hausdorff convergence of the flow is still open in general.

In \cite{SW1}, the authors investigated the Gromov-Hausdorff convergence of the K\"ahler-Ricci flow in the case of 
 Hirzebruch surfaces, and analogous higher dimensional $\mathbb{P}^1$-bundles, with $\textrm{U}(n)$-symmetric initial data.  It was shown that the K\"ahler-Ricci flow will, in the sense of Gromov-Hausdorff,  either contract a divisor, collapse a dimension or shrink to a point.  This confirmed conjectures of Feldman-Ilmanen-Knopf \cite{FIK}. 

 In the prequel to this paper \cite{SW2}, the authors considered the case of the K\"ahler-Ricci flow contracting exceptional divisors (in the usual sense of `blow-down').  More precisely, we assumed that   there exists a surjective holomorphic map $\pi$ from $X$ to a manifold $Y$ blowing down disjoint exceptional divisors $E_1, \ldots, E_k$, which are 
 codimension 1 submanifolds of $X$ biholomorphic to $\mathbb{P}^{n-1}$ and with normal bundle $\mathcal{O}(-1)$.  The map $\pi$ is a blow-down map contracting the $E_i$ to distinct points $y_1, \ldots, y_k$.  In \cite{SW2} we established  that, under a necessary cohomological condition, the K\"ahler-Ricci flow will blow down the exceptional divisors in the Gromov-Hausdorff sense, and smoothly away from the divisors, and continue on the new manifold.   Note that, in this non-collapsing case, the Ricci curvature is not bounded from below along the K\"ahler-Ricci flow \cite{Zha3}.  Thus, since one cannot directly apply  Cheeger-Colding type results (for example \cite{CC}) from Riemannian geometry, we use other methods.
 
In the current paper, we build on results of \cite{SW1, SW2}.   We improve the result of \cite{SW2} by showing that the Gromov-Hausdorff limit on $Y$ coincides with the metric completion of the limiting metric $g_T$ on $Y \setminus \{y_1, \ldots, y_k \}$ (as conjectured in \cite{SW2}).  Moreover, we
 extend the results of \cite{SW2} to  deal with the case of submanifolds $\mathbb{P}^{n-1}$ with normal bundle $\mathcal{O}(-k)$ blowing down to orbifold points.  We then apply these results to  the $\mathbb{P}^1$-bundles considered in \cite{SW1} (without assuming any symmetry of the initial data). We also investigate  minimal surfaces of general type with only distinct irreducible $(-2)$-curves and show that the normalized K\"ahler-Ricci flow converges in the Gromov-Hausdorff sense to a K\"ahler-Einstein orbifold.


We now state our results more precisely. 
Let $(X, \omega_0)$ be a compact K\"ahler manifold of complex dimension $n \ge 2$.  We consider the K\"ahler-Ricci flow,
\begin{equation} \label{krf0}
\frac{\partial}{\partial t} \omega = - \textrm{Ric}(\omega), \quad \omega|_{t=0} = \omega_0.
\end{equation}
As long as the flow exists, the K\"ahler class of $\omega(t)$ is given by
\begin{equation}
[\omega(t)] = [\omega_0] + t  c_1(K_X) >0.
\end{equation}
The first singular time  $T \in (0, \infty]$ is characterized by
\begin{equation} \label{T}
T = \sup \{ t \in \mathbb{R} \ | \ [\omega_0] + t  c_1(K_X) >0 \}.
\end{equation}
Indeed, it was shown by Tian-Zhang \cite{TZha}, extending a result in \cite{Cao}, that there exists a smooth  solution of the K\"ahler-Ricci flow (\ref{krf0}) for $t$ in $[0,T)$.  If $T$ is finite then the flow must develop a singularity as $t \rightarrow T^-$.

We introduce some terminology.  We want to consider a type of \emph{exceptional divisor} which generalizes the one dealt with in \cite{SW2}.
Suppose that $X$ contains a submanifold $E \cong \mathbb{P}^{n-1}$ with normal bundle $\mathcal{O}(-k)$ for some $k\ge 1$.  Suppose that $\pi: X \rightarrow Y$ is the map blowing down $E$, where 
$Y$ is a K\"ahler orbifold of complex dimension $n$ with an orbifold point $y_0$ with a neighborhood corresponding to a neighborhood of  $0 \in \mathbb{C}^n/\mathbb{Z}_k$.  Here $j \in \mathbb{Z}_k$ acts on $\mathbb{C}^n$ by 
\begin{equation}
j \cdot (z^1, \ldots, z^n) = (e^{2\pi \sqrt{-1}j/k} z^1, \ldots, e^{2\pi \sqrt{-1}j/k} z^n).
\end{equation}
  We say that $E$ is the \emph{$(-k)$ exceptional divisor of  $y_0$} and that $y_0$ is a \emph{$\mathbb{Z}_k$-orbifold point} of $Y$.  For more details on this construction, see Section \ref{sectlocalmodel}.

The case  of the usual blow-down of an exceptional divisor to a point, as in \cite{SW2}, coincides with  $k=1$.  Note that we can of course localize the definition so that $X$ may have a number of disjoint exceptional divisors  which may be blown-down by a map $\pi$ to an orbifold $Y$.

Our main theorem is as follows.

\pagebreak[3]
\begin{theorem} \label{mainthm}
Suppose there exists a map $\pi: X \rightarrow Y$ blowing down the  $(-k_i)$ exceptional divisors $E_{i}$ of the $\mathbb{Z}_{k_i}$-orbifold points $y_{i} \in Y$ for $i=1, \ldots, p$.  Here $Y$ is a compact K\"ahler orbifold  whose only orbifold points are  $y_{1}, \ldots, y_p$.  We assume that the $E_1, \ldots, E_p$ are all disjoint and
\begin{equation} \label{condition}
[\omega_0] + T c_1(K_X) = [\pi^*\omega_{\emph{orb}}],
\end{equation}
for $\omega_{\emph{orb}}$ a smooth orbifold K\"ahler metric on $Y$.    Then:
\begin{enumerate}
\item[(i)]  As $t \rightarrow T^-$,   the metrics $g(t)$ converge to a smooth K\"ahler metric $g_T$ on $X \setminus \bigcup_{i=1}^p E_i$ in $C^{\infty}$ on compact subsets of $X \setminus \bigcup_{i=1}^p E_i.$
Using the map $\pi$ we may also regard $g_T$ as a K\"ahler metric on $Y':= Y \setminus \{ y_1, \ldots, y_p \}$.
\item[(ii)]  Let $d_{g_T}$ be the distance function on $Y'$  given by $g_T$.  Then there exists a unique metric $d_T$ on $Y$ extending $d_{g_T}$ such that $(Y, d_T)$ is a compact metric space homeomorphic to the orbifold $Y$ and  $(Y, d_T)$ is  the metric completion of $(Y', d_{g_T})$.
\item[(iii)] $(X, g(t))$ converges to $(Y, d_T)$ in the Gromov-Hausdorff sense as $t\rightarrow T^-$.
\item[(iv)] There exists a  smooth maximal solution $g(t)$, in the orbifold sense, of the K\"ahler-Ricci flow on $Y$ for $t\in (T, T_Y)$, with $T< T_Y \le \infty$, such that $g(t)$ converges to $g_T$ as $t  \rightarrow T^+$ in $C^{\infty}$ on compact subsets of $Y'$.  Moreover $g(t)$ is uniquely determined by $g_0$.
\item[(v)] $(Y, g(t))$ converges to $(Y, d_T)$ in the Gromov-Hausdorff sense as $t\rightarrow T^+$.
\end{enumerate}
\end{theorem}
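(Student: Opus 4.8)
The plan is to combine the parabolic estimates for the K\"ahler-Ricci flow near a contracting divisor with a local model analysis adapted to the $\mathbb{Z}_k$-orbifold singularity, and then bootstrap to the global Gromov-Hausdorff and orbifold-flow statements. First I would reduce to the cohomological setup: writing $\omega(t) = \chi_t + \ddbar \varphi$ where $\chi_t$ is a fixed reference path of forms with $\chi_T = \pi^*\omega_{\textrm{orb}}$, the flow becomes a parabolic complex Monge-Amp\`ere equation for $\varphi$. The key a priori input is a uniform $C^0$ bound on $\varphi$ (via a maximum principle / Ko\l odziej-type argument, using that $[\omega_0]+Tc_1(K_X)$ is semi-ample and big) together with the local curvature control. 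From here, part (i) follows from the standard local higher-order estimates of Tian-Zhang / Song-Tian away from the exceptional locus, exactly as in \cite{SW2}; the only new point is that the target is an orbifold, which is handled by passing to local uniformizing charts $\mathbb{C}^n \to \mathbb{C}^n/\mathbb{Z}_k$ where everything is smooth and $\mathbb{Z}_k$-invariant.

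The heart of the matter, and what I expect to be the main obstacle, is part (ii): showing that the metric completion of $(Y', d_{g_T})$ is homeomorphic to $Y$ and that the added points are precisely the orbifold points $y_i$ with the correct local structure. The strategy is a quantitative diameter/volume estimate near each $E_i$: one shows that the $g(t)$-diameter of a shrinking tubular neighborhood of $E_i$ tends to $0$ as $t \to T^-$, uniformly, by constructing a local barrier using the local K\"ahler potential on $\mathcal{O}(-k) \to \mathbb{P}^{n-1}$ (whose total space, after contracting the zero section, is a neighborhood of $0 \in \mathbb{C}^n/\mathbb{Z}_k$). Concretely, on the uniformizing chart one compares $g_T$ with the flat cone metric $\frac{\sqrt{-1}}{2}\sum dz^i \wedge d\bar z^i$: the potential estimates force $g_T$ to be uniformly equivalent to the Euclidean metric on punctured balls in the chart, so distances to the puncture are finite and the completion just fills in the single point $y_i$, giving a metric homeomorphic to $Y$ near $y_i$. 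This requires carefully transplanting the estimates of \cite{SW2} through the $k$-fold cover and checking the constants do not degenerate; the genuinely new difficulty versus $k=1$ is that the map $\pi$ is no longer an isomorphism near $y_i$ on the nose, so one must work equivariantly and track how the $\mathbb{Z}_k$-action interacts with the distance function.

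Part (iii) then follows by a now-standard argument: combine the $C^\infty_{loc}$ convergence of (i) on $X \setminus \bigcup E_i$ with the diameter-collapse of the tubular neighborhoods of the $E_i$ established in the proof of (ii), to build explicit $\epsilon$-Gromov-Hausdorff approximations between $(X, g(t))$ and $(Y, d_T)$ — one maps $X \setminus (\text{nbhd of } E_i)$ via $\pi$ and sends the small neighborhoods to the points $y_i$. For part (iv), one runs the K\"ahler-Ricci flow on the orbifold $Y$ starting from $g_T$ (interpreted as an orbifold metric, which it is by (ii) and the chart analysis), invoking the orbifold version of the Tian-Zhang existence theorem — the orbifold Monge-Amp\`ere theory works verbatim in uniformizing charts — and uniqueness follows from the uniqueness of solutions to the parabolic Monge-Amp\`ere equation with given (orbifold-) continuous initial data; the $C^\infty_{loc}$ convergence $g(t) \to g_T$ as $t \to T^+$ on $Y'$ is again local regularity theory. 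Finally part (v) is proved by the same Gromov-Hausdorff approximation scheme as (iii), using that for $t$ slightly larger than $T$ the orbifold metrics $g(t)$ are $C^\infty_{loc}$-close to $g_T$ on $Y'$ and have small diameter near the $y_i$ (by the orbifold local estimates, since $g(t)$ is bounded in terms of $g_T$ and the Euclidean cone metric). The upshot is that all five parts rest on two pillars: the local potential-theoretic estimates near the exceptional/orbifold locus, transplanted to $\mathbb{Z}_k$-covers, and the metric-completion argument of (ii), which is where the real work lies.
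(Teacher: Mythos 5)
Your outline of parts (i), (iii), and (iv) is broadly consistent with the paper's strategy, but your sketch of part (ii) — which you correctly identify as the heart of the matter — rests on a claim that is false and would make the proof collapse.

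You assert that ``the potential estimates force $g_T$ to be uniformly equivalent to the Euclidean metric on punctured balls in the chart.'' This is not what the estimates give, and it cannot be true. The upper bound one actually gets near the orbifold point (Lemma \ref{lemmaest2}(i) in the paper) is $\omega \le \frac{C}{r^2}\,\omega_{\textrm{Eucl}}$, which blows up as $r \to 0$; and the lower bound $\omega \ge c\,\pi^*\omega_Y$ from Lemma \ref{basicestimates}(ii) degenerates like $r^{2(k-1)}\omega_{\textrm{Eucl}}$ as $r\to 0$. So $g_T$ is \emph{not} bi-Lipschitz to the Euclidean cone metric near $y_i$. If it were, the completion argument would indeed be trivial, but it is precisely the failure of two-sided equivalence that makes part (ii) the hard part: the available upper bound $C/r^2$ is too weak on its own to control lengths of paths in small spheres $S_r$ (it only gives $O(1)$, not $o(1)$), and the lower bound gives no useful length information at all.

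What the paper actually does is prove a radial estimate $|W|^2_g \le C r^{-2(1-\delta)}$ with $\delta = k/(k+1) > 0$, which controls lengths of radial segments (order $r^\delta$), and then — this is the genuinely new ingredient missing from your proposal — proves that any two points of an annulus $A_{r,4r}$ can be joined inside that annulus by a path of $g(t)$-length $O(r^{\delta/3})$ (Theorem \ref{diamA}). That step does not come from a pointwise comparison with a model metric at all. It comes from an \emph{integral} estimate: one produces, after unitary normalization, a small complex disk $\tilde L$ through the two given points, exhibits $\tilde L$ together with a cone $F$ to the origin and a region $W$ inside the exceptional $\mathbb P^{n-1}$ as the boundary of a $3$-chain, and applies Stokes' theorem to bound $\int_{\tilde L}\omega(t)$ by the contributions from $F$ (controlled by the $C/r^2$ upper bound and a cone-metric volume computation) and from $W \subset E$ (controlled by the cohomological decay $\int_N\omega(t) \le C(T-t)$, which in turn uses Lemma \ref{oXE} bounding $\omega(t)|_E$). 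A Fubini/mean-value argument then turns the area bound $\int_{\tilde L}\omega(t)\le Cr^\delta$ into a short path between the two points. None of this is captured by ``uniform equivalence to Euclidean,'' and without some substitute for Theorem \ref{diamA} your argument for the metric completion being $Y$ has a real gap — you cannot conclude that $\textrm{diam}_{g_T}(B_c\setminus\{0\})\to 0$ as $c\to 0$. (A secondary point: for part (v) the paper does not simply re-run local regularity; it constructs a second solution on $Y$ by a family of degenerating flows on $X$, proves estimates analogous to Lemma \ref{lemmaest2} for $t>T$, and then invokes a uniqueness lemma to identify the two constructions. Your sketch elides this, but that is a milder issue than the error in (ii).)
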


Clarifying the uniqueness statement in (iv): we obtain uniqueness at the level of potential functions, in the same sense as in \cite{SoT3, SW2}.  For details, see Section \ref{sectiv} and in particular Lemma \ref{lphi}.

In the case when the  $k_j$ are all equal to 1, the setting of the above theorem coincides with that of \cite{SW2}.  The improvement we make here is the addition of the statement in (ii) that $(Y, d_T)$ is the metric completion of $(Y', d_{g_T})$.  

 In \cite{SW2}, we called the behavior of the K\"ahler-Ricci flow as proved there a  \emph{canonical surgical contraction}.  Modifying this terminology slightly, we will say that the K\"ahler-Ricci flow performs  a canonical surgical contraction if (i)-(v) of Theorem \ref{mainthm} hold.  We also remark that by the Adjunction Formula, (\ref{condition}) can hold only if $k_j <n$.  So in particular if $X$ is a K\"ahler surface then we are necessarily in the situation of \cite{SW2}.

In Section \ref{sectlocalmodel} we give more explanation of the condition (\ref{condition}).  We remark here that although $\pi^*\oorb$ is not a smooth form on $X$ in general, the cohomology class $[\pi^* \oorb]$ is well-defined using the cohomology of currents.  Moreover, the cohomology class $[\oorb]$ on $Y$ contains a smooth nonnegative closed $(1,1)$ form $\omega_{Y}$ which is positive definite away from the orbifold points and pulls-back to a smooth $(1,1)$-form on $X$.

We apply Theorem \ref{mainthm} to  the case of the family of $n$-folds $M_{n,k}$, considered by Calabi \cite{C}, which generalize the Hirzebruch surfaces.  $M_{n,k}$ is a compactification of the blow up of a $\mathbb{Z}_k$-orbifold point, as discussed above, and is a $\mathbb{P}^1$-bundle over $\mathbb{P}^{n-1}$.   We describe the construction now in more detail.
We define $M_{n,k}$ to be the $\mathbb{P}^1$-bundle
\begin{equation}
M_{n,k} = \mathbb{P} (\mathcal{O}(-k) \oplus \mathcal{O}) \label{Mnk}
\end{equation}
over $\mathbb{P}^{n-1}$.  We will assume in this paper that $k \ge 1$ and $n \ge 2$. The case $k=0$ corresponds to the product manifold $\mathbb{P}^1 \times \mathbb{P}^{n-1}$ which will not be dealt with here.    Denote by $D_0$ and $D_{\infty}$ the divisors in $M_{n,k}$ corresponding to sections  of $\mathcal{O}(-k) \oplus \mathcal{O}$ with zero $\mathcal{O}(-k)$ and $\mathcal{O}$ component respectively (see Section \ref{sectbundles}).   $D_0$ is an exceptional divisor with normal bundle $\mathcal{O}(-k)$ of the type discussed above.

There is a map $\pi$ from $M_{n,k}$ to an orbifold $Y_{n,k}$ which blows down the exceptional divisor $D_0$.  We describe this in detail in Section \ref{sectbundles}.
The orbifold $Y_{n,k}$ is the weighted projective space
\begin{align}
Y_{n,k} = \{ (Z_0, \ldots, Z_n) \in \mathbb{C}^{n+1} \}/ \sim,
\end{align}
where $(Z'_0, \ldots, Z'_n) \sim (Z_0, \ldots, Z_n)$ if there exists $\lambda \in \mathbb{C}^*$ such that
\begin{align}
(Z'_0, Z'_1, \ldots, Z'_n) = (\lambda^k Z_0, \lambda Z_1, \ldots, \lambda Z_n).
\end{align}
We write elements of $Y_{n,k}$ as $[Z_0, \ldots, Z_n]$.  Then $Y_{n,k}$ has a single $\mathbb{Z}_k$-orbifold point at $[1,0,\ldots, 0]$.  The map $\pi$ restricted to $M_{n,k} \setminus D_0$ is a biholomorphism onto $Y_{n,k} \setminus \{ [1,0, \ldots, 0 ]\}$ and $\pi(D_0) = [1,0, \ldots, 0]$.

All of the manifolds $M_{n,k}$ admit K\"ahler metrics.  Indeed, the cohomology classes of the line bundles $[D_0]$ and $[D_{\infty}]$ span $H^{1,1}(M_{n,k}; \mathbb{R})$ and every K\"ahler class $\alpha$ can be  written uniquely as
\begin{equation} \label{alpha}
\alpha = \frac{b}{k} [D_{\infty}] - \frac{a}{k} [D_0]
\end{equation}
for constants $a$, $b$ with $0< a < b$. 
We consider the special case when $1 \le k \le n-1$ and the initial K\"ahler metric $\omega_0$ lies in the class
\begin{equation}
\alpha_0 =  \frac{b_0}{k} [D_{\infty}] - \frac{a_0}{k} [D_0], \quad \textrm{with} \quad 
(n+k) a_0 < (n-k) b_0.
\end{equation}
In this case  it is known (see \cite{SW1} and the references therein) that a solution to (\ref{krf0}) on $M_{n,k}$ starting at $\omega_0 \in \alpha_0$ exists on $[0,T)$ with $T=a_0/(n-k)$.  As $t \rightarrow T$, the coefficient $a_t$ tends to zero.  Moreover, the metrics $g(t)$ converge smoothly on compact subsets of $M_{n,k} \setminus D_0$ to a K\"ahler metric $g_T$. It was shown in \cite{SW1} that the K\"ahler-Ricci flow contracts the divisor $D_0$ in the sense of Gromov-Hausdorff 
under the assumption that the initial metric satisfies a $\textrm{U}(n)$ symmetry.   This result was conjectured by Feldman-Ilmanen-Knopf in their detailed analysis of self-similar solutions of the K\"ahler-Ricci flow \cite{FIK}.  In the following theorem we make no symmetry assumption.

\pagebreak[3]
\begin{theorem} \label{thmhirz}  On $M_{n,k}$,
let $\omega(t)$ be a solution of the K\"ahler-Ricci flow (\ref{krf0}) for $t \in [0,T)$.  Assume that  the initial K\"ahler metric $\omega_0$ lies in the K\"ahler class $\alpha_0$ given by $a_0, b_0$ satisfying $0< a_0< b_0$.  Assume
\begin{enumerate}
\item[(a)] $1 \le k \le n-1$;  \ and
\item[(b)] $(n+k) a_0 < (n-k) b_0$.
\end{enumerate}
Then the K\"ahler-Ricci flow $g(t)$ on  $M_{n,k}$ performs a canonical surgical contraction with respect to $\pi: M_{n,k} \rightarrow Y_{n,k}$, the divisor $D_0$ and $y_0 = [1,0,\ldots, 0] \in Y_{n.k}$ in the sense of Theorem \ref{mainthm}.   
\end{theorem}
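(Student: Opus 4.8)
The plan is to deduce Theorem \ref{thmhirz} from Theorem \ref{mainthm} by verifying its hypotheses for the specific family $M_{n,k}$. First I would record the relevant intersection-theoretic data on $M_{n,k}$: the canonical bundle formula $K_{M_{n,k}} = -2D_\infty + (k - n)D_0 /\, k \cdot(\dots)$ — more precisely, using $D_\infty - D_0 = \pi^* \mathcal{O}_{\mathbb{P}^{n-1}}(k)$ (the pullback of the hyperplane class scaled by $k$) and the relative Euler sequence, one computes $c_1(K_{M_{n,k}})$ in terms of $[D_0]$ and $[D_\infty]$. This lets me express $[\omega_0] + t\, c_1(K_{M_{n,k}})$ in the basis $\frac{1}{k}[D_\infty], \frac{1}{k}[D_0]$, obtaining the evolving coefficients $a_t, b_t$; a direct check gives $a_t = a_0 - (n-k)t$ and $b_t = b_0 - (n-k)t$ (up to the normalization conventions of \cite{SW1}), so that $a_t \to 0$ exactly at $T = a_0/(n-k)$, which is the finite singular time, and at $t = T$ one has $[\omega_0] + T c_1(K_X) = \frac{b_T}{k}[D_\infty]$ with $b_T = b_0 - a_0 > 0$ by hypothesis (b) (which forces $b_0 > a_0$ when $k \le n-1$).

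The key step is then to identify this limiting class as $[\pi^* \omega_{\mathrm{orb}}]$ for a smooth orbifold Kähler metric $\omega_{\mathrm{orb}}$ on $Y_{n,k}$, thereby verifying the cohomological condition (\ref{condition}). For this I would use the description of $\pi: M_{n,k} \to Y_{n,k}$ from Section \ref{sectbundles}: the divisor $D_\infty$ is disjoint from $D_0$, the map $\pi$ contracts $D_0$ to the $\mathbb{Z}_k$-orbifold point, and $D_\infty$ descends to an ample orbifold divisor on the weighted projective space $Y_{n,k}$ whose associated orbifold line bundle carries the Fubini–Study-type orbifold metric. Concretely, $\frac1k[D_\infty]$ should be $\pi^*$ of the positive generator of the orbifold Picard group of $Y_{n,k}$, so $\frac{b_T}{k}[D_\infty] = [\pi^*(b_T \,\omega_{\mathrm{FS},\mathrm{orb}})]$; one then sets $\omega_{\mathrm{orb}} = b_T\, \omega_{\mathrm{FS},\mathrm{orb}}$. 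I would also need to check that $D_0$ really is a $(-k)$ exceptional divisor in the sense defined before Theorem \ref{mainthm}, i.e. that $D_0 \cong \mathbb{P}^{n-1}$ with normal bundle $\mathcal{O}(-k)$ and that a neighborhood of $D_0$ in $M_{n,k}$ is biholomorphic to a neighborhood of the zero section in the total space of $\mathcal{O}_{\mathbb{P}^{n-1}}(-k)$, which blows down to a neighborhood of $0 \in \mathbb{C}^n/\mathbb{Z}_k$ — this is immediate from the $\mathbb{P}^1$-bundle construction \eqref{Mnk}, since $D_0$ is the section with zero $\mathcal{O}(-k)$-component and its normal bundle is $\mathcal{O}(-k)$ by the splitting.

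Once the hypotheses of Theorem \ref{mainthm} are checked — $X = M_{n,k}$ with the single disjoint exceptional divisor $E_1 = D_0$, $Y = Y_{n,k}$ with sole orbifold point $y_0 = [1,0,\dots,0]$, and condition \eqref{condition} holding with $\omega_{\mathrm{orb}} = b_T\,\omega_{\mathrm{FS},\mathrm{orb}}$ — the conclusion (i)–(v) of Theorem \ref{mainthm} applies verbatim and yields precisely the statement that the flow performs a canonical surgical contraction with respect to $\pi, D_0, y_0$. I expect the main obstacle to be the clean verification of the cohomological identification $\frac1k[D_\infty] = \pi^*(\text{generator})$ together with the claim that the resulting class is represented by a genuinely smooth orbifold Kähler metric (rather than merely a semipositive current): one must be careful with the weighted-projective-space orbifold structure and the fact that $\pi^*\omega_{\mathrm{orb}}$ is only a closed positive current on $M_{n,k}$, degenerate along $D_0$, not a smooth Kähler form. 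The remaining bundle computations (canonical class, normal bundle, disjointness of $D_0$ and $D_\infty$, the precise singular time $T$) are routine given the setup of Section \ref{sectbundles} and the references in \cite{SW1}. I should also double check that hypothesis (b), $(n+k)a_0 < (n-k)b_0$, is exactly what is needed to guarantee both that the flow has the expected limiting behavior on compact subsets of $M_{n,k}\setminus D_0$ (as recalled before the theorem) and that $b_T = b_0 - a_0 > 0$, so that the limiting class is Kähler on $Y_{n,k}$.
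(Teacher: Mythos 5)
Your overall strategy coincides with the paper's: verify condition (\ref{condition}) for $M_{n,k}$ and then invoke Theorem \ref{mainthm}. Two points deserve comment.

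First, your computation of the evolving K\"ahler class contains an error. You state $b_t = b_0 - (n-k)t$, giving $b_T = b_0 - a_0$; the correct evolution (as in the paper and \cite{SW1}) is $b_t = b_0 - (k+n)t$ and $a_t = a_0 - (n-k)t$, so $T = a_0/(n-k)$ and $b_T = b_0 - (k+n)a_0/(n-k)$. Hypothesis (b), $(n+k)a_0 < (n-k)b_0$, is then \emph{exactly} the condition $b_T > 0$ (not merely something that ``forces $b_0 > a_0$''). Your conclusion that (b) ensures the limiting class is K\"ahler on $Y_{n,k}$ is still correct, but the intermediate formula you wrote for $b_t$ is wrong and would not survive a careful reader; the caveat ``up to normalization conventions'' does not account for the discrepancy, since $n-k \neq n+k$ when $k \ge 1$.

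Second, where you invoke the weighted-projective Fubini--Study orbifold metric as $\omega_{\mathrm{orb}}$, the paper instead constructs the degenerate reference form $\omega_Y$ directly from the Calabi-type potential $u = \log(1 + (|z^1|^2 + \cdots + |z^n|^2)^k)$, which extends to a closed nonnegative orbifold $(1,1)$-form on $Y_{n,k}$, positive definite away from the orbifold point, with $\pi^*\omega_Y$ smooth on $M_{n,k}$ in the class $[D_\infty]$. This lets the paper bypass the issue you flag — that $\pi^*\omega_{\mathrm{orb}}$ is only a closed positive current — by using the remark at the end of Section \ref{sectlocalmodel}, namely that condition (\ref{condition}) may be replaced by $[\omega_0] + Tc_1(X) = [\pi^*\omega_Y]$ with $\omega_Y$ as in Lemma \ref{lemmaoy}. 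Your route via a genuine orbifold K\"ahler metric $\omega_{\mathrm{FS,orb}}$ also works (since Lemma \ref{lemmaoy} then produces the needed $\omega_Y$ from $\omega_{\mathrm{orb}}$), but you would still need to verify that the orbifold FS metric lifts to a smooth $\mathbb{Z}_k$-invariant K\"ahler metric near the origin, which you assert but do not check. The paper's explicit construction is more self-contained and makes that verification automatic.
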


If $n=2$ the result of Theorem \ref{thmhirz}  is  contained in \cite{SW2}, except for the assertion about the metric completion.  In that case $k=1$ and the manifold $M_{2,1}$ coincides with $\mathbb{P}^2$ blown-up at one point, $D_0$ with the exceptional curve. 


Next we consider the case
of the  K\"ahler-Ricci flow on a minimal surface of general type.  Recall that that a projective surface $X$ of general type (Kodaira dimension 2)
 is called a \emph{minimal surface} if  $X$ contains no curves $C$ with $C\cdot C = -1$.  It is well-known that every surface of general type is birational to a minimal surface (its  `minimal model').

For $m$ sufficiently large, the holomorphic sections of $K_X^m$ induce a holomorphic map $\Phi: X \rightarrow \mathbb{P}^N$  for some $N$.  The image of $\Phi$ is the  \emph{canonical model} $\Xc$.  The map $\Phi$ contracts the $(-2)$-curves on $X$.  The canonical model $\Xc$ may not be a smooth projective variety in general, but it is at worst  an orbifold with finitely many orbifold singularities.  Moreover,  $\Xc$ has negative first Chern class and admits an orbifold K\"ahler-Einstein metric $\omega_{\textrm{KE}}$ (see \cite{Y1, A, Kob}).    

For convenience we consider the \emph{normalized} K\"ahler-Ricci flow on $X$: 
\begin{equation} \label{krf1}
\frac{\partial}{\partial t} \omega = - \textrm{Ric}(\omega) - \omega, \quad \omega|_{t=0} = \omega_0,
\end{equation}
for $\omega_0$ a smooth K\"ahler metric on $X$.  Denote by $C_1, \ldots, C_k$ the $(-2)$-curves on $X$.  It was shown by Tsuji \cite{Ts} and Tian-Zhang \cite{TZha} that a solution to (\ref{krf1}) exists for all time and converges smoothly on compact subsets of $X \setminus \bigcup_{i=1, \ldots k} C_i$ to $\Phi^* \omega_{\textrm{KE}}$.  We make the assumption that $X$ contains only distinct irreducible $(-2)$ curves (so, in particular, not intersecting), and we prove the following:

\begin{theorem} \label{thmmin}  Let $X$ be a minimal surface of general type.  Assume  that $X$ contains only distinct irreducible $(-2)$-curves.  Let $\omega(t)$ be a solution of (\ref{krf1}) for $t$ in $[0, \infty)$.  Then $(X, \omega(t))$ converges in the Gromov-Hausdorff sense to $(X_{\emph{can}}, \omega_{\emph{KE}})$ as $t \rightarrow \infty$.  
\end{theorem}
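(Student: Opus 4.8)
The strategy is to run the argument behind Theorem \ref{mainthm} and its prequel \cite{SW2}, adapted to the feature that here the curves are contracted only in the limit $t\to\infty$, not at a finite singular time. First fix the geometric picture. Since by hypothesis the $(-2)$-curves $C_1,\dots,C_k$ are distinct and irreducible --- hence, as noted above, pairwise disjoint --- each $C_i\cong\mathbb{P}^1$ has normal bundle $\mathcal{O}(-2)$, and $\Phi$ contracts $C_i$ to an $A_1$ rational double point of $\Xc$, which is exactly a $\mathbb{Z}_2$-orbifold point in the sense of Section \ref{sectlocalmodel} (local model $\mathbb{C}^2/\mathbb{Z}_2$, i.e.\ $n=k=2$). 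Hence $\Xc$ is a compact K\"ahler orbifold whose only singularities are these finitely many $\mathbb{Z}_2$-points, and it carries a smooth orbifold K\"ahler--Einstein metric $\omega_{\textrm{KE}}$ with $\ric(\omega_{\textrm{KE}})=-\omega_{\textrm{KE}}$ and $[\omega_{\textrm{KE}}]=c_1(K_{\Xc})$. Because $\omega_{\textrm{KE}}$ is a genuine smooth orbifold metric, $(\Xc,d_{\omega_{\textrm{KE}}})$ is a compact metric space homeomorphic to $\Xc$ and equals the metric completion of $(\Xc\setminus\Phi(\bigcup_i C_i),\,d_{\omega_{\textrm{KE}}})$; this is the analogue of part (ii) of Theorem \ref{mainthm}, but here it is immediate. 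Finally, by Tsuji \cite{Ts} and Tian--Zhang \cite{TZha} the solution $\omega(t)$ of (\ref{krf1}) exists for all $t\ge0$ and converges in $C^\infty$ on compact subsets of $X\setminus\bigcup_i C_i$ to $\Phi^*\omega_{\textrm{KE}}$. So the one remaining point is the global Gromov--Hausdorff convergence $(X,g(t))\to(\Xc,d_{\omega_{\textrm{KE}}})$ as $t\to\infty$.

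The heart of the matter is a single uniform estimate near the curves: for every $\ve>0$ there are neighbourhoods $U_i\supset C_i$ and a time $t_\ve$ such that $\diam_{g(t)}(U_i)<\ve$ for all $t\ge t_\ve$. To prove it I would transcribe the estimates of \cite{SW1,SW2}: write the normalized flow as a parabolic complex Monge--Amp\`ere equation for a K\"ahler potential $\varphi(t)$ with respect to reference forms in the classes $[\omega(t)]=e^{-t}[\omega_0]+(1-e^{-t})c_1(K_X)$ --- which converge to the boundary class $c_1(K_X)=[\Phi^*\omega_{\textrm{KE}}]$ --- and obtain a uniform $C^0$ bound on $\varphi$; then, using the parabolic Schwarz lemma together with a comparison to the rescaled local model metric on the total space of $\mathcal{O}(-2)$ over $\mathbb{P}^1$ from Section \ref{sectlocalmodel}, control the geometry of a small neighbourhood of $C_i$ uniformly in $t$ by that of the (fixed) local model, whose small neighbourhoods of the zero section have small diameter. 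The quantitative driver is that $K_X\cdot C_i=0$ by adjunction, so $\int_{C_i}\omega(t)=e^{-t}\int_{C_i}\omega_0\to0$: the area of each curve collapses, pinning the local geometry to the limiting $\mathbb{C}^2/\mathbb{Z}_2$ cone picture. From this estimate, together with the known smooth convergence on $X\setminus\bigcup_i U_i$, one also gets that $\diam_{g(t)}(X)$ is uniformly bounded.

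Granting the estimate, the Gromov--Hausdorff convergence is assembled exactly as for part (iii) of Theorem \ref{mainthm}. Given $\ve>0$, take $U_i$ as above; on the compact set $X\setminus\bigcup_i U_i$ the map $\Phi$ is a biholomorphism onto a fixed compact subset of $\Xc$ away from the orbifold points on which $g(t)\to\Phi^*\omega_{\textrm{KE}}$ smoothly, so there $\Phi$ distorts $d_{g(t)}$-distances by an error tending to $0$, while each $U_i$ maps into a fixed small neighbourhood of $\Phi(C_i)$ and has $g(t)$-diameter $<\ve$. Using the uniform diameter bound to control geodesics meeting several of the $U_i$, one concludes that $\Phi$ is an $\ve_t$-Gromov--Hausdorff approximation $(X,g(t))\to(\Xc,d_{\omega_{\textrm{KE}}})$ with $\ve_t\to0$, which is the assertion of the theorem.

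The main obstacle is the one already present in Theorem \ref{mainthm}: along this non-collapsed contraction the Ricci curvature has no uniform lower bound (cf.\ \cite{Zha3}), so Cheeger--Colding theory cannot be invoked and the sharp control of $d_{g(t)}$ near each $C_i$ must be extracted by hand from the rescaled local model --- which is where essentially all the work of \cite{SW1,SW2} is needed. Beyond transcribing that analysis, the only genuinely new points are the bookkeeping of the $t\to\infty$ limit (the classes $[\omega(t)]$ only approach, but never reach, the boundary class $c_1(K_X)$, so the flow must be shown to behave asymptotically like a contraction) and checking that the local analysis still applies in the borderline case $k=n=2$, where the cohomological condition (\ref{condition}) of Theorem \ref{mainthm} --- which would otherwise force a finite singular time --- necessarily fails.
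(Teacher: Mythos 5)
Your proposal is correct and takes essentially the same approach as the paper: reduce to the $\mathbb{C}^2/\mathbb{Z}_2$ local model, use the local maximum-principle estimates (in particular the radial bound on $|W|^2_{g(t)}$, Lemma \ref{lemmaest3}(iii)) to show that small Euclidean balls around each $C_i$ have $g(t)$-diameter $\to 0$, combine with the smooth convergence of Tsuji and Tian--Zhang away from the curves, and assemble the Gromov--Hausdorff approximation as in part (iii) of Theorem \ref{mainthm}. The only technical detail worth noting that you do not make explicit is that, in place of Lemma \ref{lemmaoy}, the paper invokes Kodaira's lemma ($\chi - \ve R(h) > 0$ since $[\chi]=[K_X]$ is big and nef) to get the needed uniform-in-$t$ lower bound $\hat\omega_t - \ve' R(h) > c\,\omega_0$ driving the trace estimates.
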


To prove the result in this paper  we make extensive use of methods from \cite{SW2}.  To avoid repetition we focus here only on the new arguments that are needed in these cases.

Finally, we remark that the authors were informed by G. Tian that he and G. La Nave are currently writing up some results on the V-soliton equation, introduced in \cite{LT}, which may treat some of the cases considered  in \cite{SW2}.

\pagebreak[3]
\section{The local model} \label{sectlocalmodel}

In this section we describe the local model of a $(-k)$ exceptional divisor blowing down to an orbifold point.  Most of this material is  well-known, but for the convenience of the reader we collect together here some useful facts which will be needed later.

Let $L$ be the $\mathcal{O}(-k)$ line bundle over $\mathbb{P}^{n-1}$, for $k \ge 1$.  We give a description of the total space of $L$ as follows.   Writing $[Z_1, \ldots, Z_n]$ for the homogeneous coordinates on $\mathbb{P}^{n-1}$, we   define 
\begin{equation}
L = \{ ([Z_1, \ldots, Z_n], \sigma) \in \mathbb{P}^{n-1} \times \mathbb{C}^n \ | \ \sigma \textrm{ is in the line  } \lambda \mapsto (\lambda(Z_1)^k, \ldots, \lambda(Z_n)^k) \},
\end{equation}
and let  $p: L \rightarrow \mathbb{P}^{n-1}$  be the projection onto the first factor.  Each fiber  $p^{-1}([Z_1, \ldots, Z_n])$ is a line in $\mathbb{C}^n$.  $L$ can be given  $n$ complex coordinate charts
\begin{equation}
U_i = \{ ([Z_1, \ldots, Z_n], \sigma) \in L \ | \ Z_i \neq 0 \}, \quad \textrm{for } i=1, \ldots, n.
\end{equation}
On $U_i$ we have coordinates $w_{(i)}^j$ for $j=1, \ldots, n$ with $j \neq i$ and $y_{(i)}$.  The $w_{(i)}^j$ are defined by
\begin{equation}
w^{j}_{(i)} = Z_j/Z_i, \qquad \textrm{for } j \neq i,
\end{equation}
and $y_{(i)}$ by
\begin{equation}
\sigma = \frac{y_{(i)}}{(Z_i)^k}  ((Z_1)^k, \ldots, (Z_n)^k).
\end{equation}
On $U_i \cap U_{\ell}$ with $i \neq \ell$ we have
\begin{equation}
w_{(i)}^j = \frac{w^j_{(\ell)}}{w_{(\ell)}^i} \textrm{ for } j \neq i,\ell, \quad w_{(i)}^{\ell} = \frac{1}{w^i_{(\ell)}} \quad \textrm{and} \quad y_{(i)} = y_{(\ell)} \left( \frac{Z_{i}}{Z_{\ell}}   \right)^k = y_{(\ell)} (w^i_{(\ell)})^k.
\end{equation}

Now let $E$ be the submanifold of $L$ defined by the zero section of $L$ over $\mathbb{P}^{n-1}$.  Denote by $[E]$ the pull-back line bundle $p^*L$ over $L$, which corresponds to the hypersurface $E$.  Writing the transition functions of $[E]$  in $U_i \cap U_{\ell}$ as 
$
t_{i \ell} = \left( \frac{Z_i}{Z_{\ell}} \right)^k = \frac{y_{(i)}}{y_{(\ell)}}$,
 we have a section $s$ over $[E]$ given by
\begin{equation} \label{se}
s_i: U_i \rightarrow \mathbb{C}, \qquad s_i = y_{(i)}.
\end{equation}
We can define a Hermitian metric $h$ on the fibers of $[E]$ by
\begin{equation} \label{he}
h_i = \frac{\left( \sum_{j=1}^n |Z_j|^{2}\right)^k}{|Z_i|^{2k}} \quad \textrm{on} \quad U_i.
\end{equation}
Namely, $h$ is the pull-back of $h_{\textrm{FS}}^{-k}$ where $h_{\textrm{FS}}$ is the Fubini-Study metric on $\mathcal{O}(1)$.
We have
\begin{equation} \label{s}
|s|^2_h = |y_{(i)}|^2 \frac{\left(\sum_{j=1}^n |Z_j|^{2}\right)^k}{|Z_i|^{2k}} \quad \textrm{on} \quad U_i.
\end{equation}

Note that there is a map $\pi: L \rightarrow \mathbb{C}^n/\mathbb{Z}_k$ given by
\begin{equation}
\pi( [Z_1, \ldots, Z_n], \sigma) = (\lambda Z_1, \ldots, \lambda Z_n) \in \mathbb{C}^n/\mathbb{Z}_k,
\end{equation}
where $\lambda$ is a complex number  satisfying $\lambda^k (Z_1^k, \ldots, Z_n^k) = \sigma$.  The map $\pi$ is a biholomorphism away from $E$ and $\pi^{-1}(0)=E$.

The map $\pi$ is not a smooth map $L \rightarrow \mathbb{C}^n/\mathbb{Z}_k$ in the orbifold sense.  In general, a smooth orbifold function $f$ on $\mathbb{C}^n/\mathbb{Z}_k$ (that is, an $f$ which lifts to a smooth $\mathbb{Z}_k$-invariant function $\tilde{f}$ on $\mathbb{C}^n$) may not pull-back via $\pi$ to a smooth function on $L$.  However any smooth orbifold function $f$ on $\mathbb{C}^n/\mathbb{Z}_k$ whose lift $\tilde{f}: \mathbb{C}^n \rightarrow \mathbb{R}$ is of the form $\tilde{f}(z) = \mu(r^{2k})$, where $r^2 = |z^1|^2+ \cdots + |z^n|^2$ and $\mu: [0, \infty) \rightarrow \mathbb{R}$ is a smooth function, has the property that $\pi^*f$ is smooth on $L$.  In particular, note that, by (\ref{s}), the pull-back via $\pi$ of $r^{2k}$ is the smooth function
\begin{equation} \label{r2k}
\pi^* r^{2k} = |s|^2_h.
\end{equation}
on $L$.

Let $\oeuc$ be the standard orbifold metric on $\mathbb{C}^n/\mathbb{Z}_k$, which lifts to the Euclidean metric on $\mathbb{C}^n$.  Using coordinates $z^i$ on $\mathbb{C}^n$ we write $\oeuc$ as 
\begin{equation} \label{oeucdef}
\oeuc = \frac{\sqrt{-1}}{2\pi} \sum_i dz^i \wedge d\ov{z^i}.
\end{equation}
The metric $\oeuc$ does not pull-back via $\pi$ to a smooth metric on $L$.  However we can consider the nonnegative orbifold $(1,1)$-form
\begin{equation} \label{omegainfty}
\osm = \ddbar( |\underline{z} |^{2k}),
\end{equation}
on $\mathbb{C}^n/\mathbb{Z}_k$, where 
$|\underline{z}|^2 := r^2 := |z^1|^2 + \cdots  + |z^n|^2.$  Note that $\osm$  is positive definite away from $0$.
By the discussion above $\pi^*\osm$ is a smooth closed $(1,1)$-form on $L$.

\begin{lemma} \label{lemmakahler}
If $c>0$ is any constant then
\begin{equation} \label{claim}
\omega := \pi^* \osm - c R(h) \qquad \textrm{is K\"ahler on } L,
\end{equation}
where $R(h)$ is the curvature of the Hermitian metric $h$ on $[E]$ as described above. 
\end{lemma}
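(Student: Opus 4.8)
The plan is to exhibit $\omega$ explicitly as $\ddbar$ of a smooth function on $L$ and then check positivity in local coordinates, exploiting the $\mathrm{U}(n)$-symmetry of the construction to reduce to a one-variable computation. First I would record that by (\ref{r2k}) we have $\pi^*\osm = \pi^* \ddbar(r^{2k}) = \ddbar(\pi^* r^{2k}) = \ddbar |s|^2_h$, which is smooth and closed on $L$ by the discussion preceding the lemma. On the other hand, the Chern curvature $R(h)$ of the Hermitian metric $h$ on $[E]$ satisfies $R(h) = -\ddbar \log h_i$ on each chart $U_i$, so $-cR(h) = c\,\ddbar \log h_i$. Since $h_i = |s_i|^2_h / |s_i|^2 = |s|^2_h/|y_{(i)}|^2$ and $\log|y_{(i)}|^2$ is pluriharmonic where $y_{(i)} \neq 0$, one gets $-cR(h) = c\,\ddbar \log |s|^2_h$ away from $E$; but of course $R(h)$ itself is globally smooth on $L$. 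Thus on $L \setminus E$,
\begin{equation*}
\omega = \ddbar\bigl( |s|^2_h + c \log |s|^2_h \bigr),
\end{equation*}
and the statement reduces to: the function $u := |s|^2_h + c\log|s|^2_h$ has positive-definite complex Hessian, with the degenerating $\log$ term being compensated near $E$ by the curvature form $R(h)$ being smooth (and its negative contributing positively in the fiber direction).

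Next I would do the local computation on a fixed chart $U_i$. Using coordinates $w = w_{(i)}^j$ ($j \neq i$) and $y = y_{(i)}$, write $|s|^2_h = |y|^2 e^{\psi(w)}$ where $e^{\psi(w)} = (\sum_j |Z_j|^2)^k/|Z_i|^2k$ is the pulled-back Fubini-Study weight, so that $\ddbar \psi = -R(h)|_{U_i}$ (up to sign, this is $k\,\ofs$ pulled back, hence nonnegative on the base directions and positive there). Introduce the fiber radial variable $\rho := |s|^2_h = |y|^2 e^\psi \ge 0$. Then $\pi^*\osm = \ddbar \rho$ and $\omega = \ddbar(\rho + c\log\rho)$. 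Computing the $(1,1)$-form: $\partial(\rho + c\log\rho) = (1 + c/\rho)\partial\rho$, and
\begin{equation*}
\ddbar(\rho + c\log\rho) = \Bigl(1 + \frac{c}{\rho}\Bigr)\ddbar \rho - \frac{c}{\rho^2}\,\partial\rho \wedge \dbar\rho.
\end{equation*}
Now $\ddbar\rho = \rho\bigl(\ddbar\psi\bigr) + \rho^{-1}\partial\rho\wedge\dbar\rho$ after substituting $\rho = |y|^2e^\psi$ and expanding (the cross terms organize into the $\partial\rho\wedge\dbar\rho$ piece since $\partial\log\rho = \partial\log|y|^2 + \partial\psi$). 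Plugging in, the $\partial\rho\wedge\dbar\rho$ terms combine to $\bigl[(1+c/\rho)\rho^{-1} - c\rho^{-2}\bigr]\partial\rho\wedge\dbar\rho = \rho^{-1}\partial\rho\wedge\dbar\rho \ge 0$, while the remaining term is $(1 + c/\rho)\rho\,\ddbar\psi = (\rho + c)\ddbar\psi$. Since $\ddbar\psi = -R(h)$ is the pullback of $k$ times the Fubini-Study form on the base — strictly positive in the $w$-directions — and $\rho + c \ge c > 0$, this term is bounded below by $c\,\ddbar\psi > 0$ in the base directions. The form $\rho^{-1}\partial\rho\wedge\dbar\rho$ is a nonnegative rank-one form that is strictly positive in the fiber ($y$) direction wherever $y\neq 0$. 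Adding, $\omega$ is positive definite on $L\setminus E$.

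The main obstacle is the behavior along $E = \{y_{(i)} = 0\}$, where $\rho = 0$ and the above formulas are singular: I need to show $\omega$ extends to a smooth positive-definite form there, not merely that it is positive off $E$. Here I would argue more carefully by writing $\omega = \pi^*\osm - cR(h)$ directly — both terms are manifestly smooth on all of $L$ (the first by the criterion quoted before the lemma, since $r^{2k}$ pulls back to the smooth function $|s|^2_h$; the second because $h$ is a smooth Hermitian metric on the line bundle $[E]$). So smoothness is free; only positivity at $y = 0$ needs checking. Along $E$, $\pi^*\osm = \ddbar(|y|^2 e^\psi) = e^\psi\,\frac{\sqrt{-1}}{2\pi}dy\wedge d\bar y$ in the fiber direction (the base-direction contributions vanish to order $|y|^2$), which is strictly positive in $dy$; and $-cR(h) = c\,\ddbar\psi$ is strictly positive in the base $w$-directions and vanishes in the fiber direction. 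Hence at points of $E$ the sum is block-diagonal with a positive fiber block from $\pi^*\osm$ and a positive base block from $-cR(h)$, giving positive-definiteness. By continuity and the off-$E$ computation, $\omega$ is K\"ahler on all of $L$. Finally I would note the form is closed since both $\pi^*\osm$ (pullback of a closed form) and $R(h)$ (a Chern curvature) are closed, completing the proof. \qed
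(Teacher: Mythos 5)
Your proof is correct, and the decisive step --- positive definiteness at points of $E$ via the block structure (fiber direction coming from $\pi^*\osm$, base directions from $-cR(h)$) --- is exactly the paper's argument, carried out in the chart $U_n$. Where you differ is off $E$: the paper dispenses with that case in one line, observing that $\pi$ is a biholomorphism from $L\setminus E$ onto $(\mathbb{C}^n/\mathbb{Z}_k)\setminus\{0\}$ so $\pi^*\osm$ is already positive definite there, and $-cR(h)\ge 0$, hence $\omega\ge\pi^*\osm>0$ on $L\setminus E$. Your route instead derives the closed formula $\omega=(\rho+c)\,\ddbar\psi+\rho^{-1}\,\frac{\sqrt{-1}}{2\pi}\partial\rho\wedge\dbar\rho$; this is a valid and rather attractive identity (it makes the source of positivity transparent and matches the shape of Lemma \ref{lemmahato}), but it is more than the lemma needs, and to conclude positive-definiteness from it you should say explicitly that the two nonnegative summands have kernels meeting only in $\{0\}$ --- $\ker\ddbar\psi$ is precisely the fiber line, and $\partial\rho(\partial/\partial y)=e^{\psi}\bar y\neq 0$ off $E$ --- rather than only that each is ``positive in a complementary direction,'' which by itself does not force a sum of degenerate forms to be definite.
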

\begin{proof} In $U_i$ we have
\begin{equation}
-R(h) = k \ddbar \log (1+  \sum_{j \neq i} |w^j_{(i)}|^{2}) \ge 0.
\end{equation}
Then since $\pi^*\omega_{\infty}$ is positive definite on $L \setminus E$,
we only need to check that $\omega$ is positive definite at points of  $E$. For simplicity of notation, we compute in the chart $U_n$, and write $\zeta_j = w^j_{(n)}$ for $j=1, \ldots, n-1$ and $\zeta_n = y_{(n)}$.  Then
\begin{equation}
\omega = \ddbar \left( |\zeta_n |^{2}  (1+ \sum_{j=1}^{n-1} |\zeta_j|^{2})^k \right) + c k \ddbar \log (1+  \sum_{j= 1}^{n-1} |\zeta_j|^{2}),
\end{equation}
and at a point of $E$ we have $\zeta_n=0$ and hence
\begin{equation}
\omega \ge \frac{\sqrt{-1}}{2\pi}  d \zeta_n \wedge d \ov{\zeta_n} +  c k \ddbar \log (1+  \sum_{j= 1}^{n-1} |\zeta_j|^{2}),
\end{equation}
which is clearly positive definite.\qed
\end{proof}

It will be useful to write down this reference metric in local coordinates on the orbifold $\mathbb{C}^n/\mathbb{Z}_k$.  Abusing notation somewhat, we write $\omega$ for the metric $(\pi|_{L\setminus E}^{-1})^*\omega$ on $(\mathbb{C}^n/\mathbb{Z}_k) \setminus \{0 \}$.  We write $B$ for the unit ball in $\mathbb{C}^n/\mathbb{Z}_k$.

\begin{lemma}  \label{lemmahato} For $\omega$ as defined in Lemma \ref{lemmakahler} we have, on $(\mathbb{C}^n/\mathbb{Z}_k) \setminus \{ 0 \}$,
\begin{equation} \label{hatg}
\omega = \frac{\sqrt{-1}}{2\pi} \sum_{i,j} \left( k r^{2(k-1)} (\delta_{ij} + \frac{(k-1)}{r^2} \ov{z^i}{z^j})  + \frac{ck}{r^2}( \delta_{ij} - \frac{\ov{z^i} z^j}{r^2} ) \right) dz^i \wedge d\ov{z^j},
\end{equation}
and, in particular, on $B \setminus \{0 \}$, 
\begin{equation}\label{orbineq}
k r^{2(k-1)} \omega_{\emph{Eucl}} \le \omega \le \frac{C}{r^2} \omega_{\emph{Eucl}},
\end{equation}
for $C = 2k-1+ck$.
\end{lemma}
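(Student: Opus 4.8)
The plan is to transport $\omega$ to $(\mathbb{C}^n/\mathbb{Z}_k)\setminus\{0\}$ via $\pi$, recognize it there as a sum of two elementary $\partial\dbar$-Hessians, compute those Hessians in the Euclidean coordinates to obtain (\ref{hatg}), and then diagonalize the resulting Hermitian matrix to read off (\ref{orbineq}).

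First I would push $\omega = \pi^*\osm - cR(h)$ forward under the biholomorphism $\pi|_{L\setminus E}\colon L\setminus E \to (\mathbb{C}^n/\mathbb{Z}_k)\setminus\{0\}$. The piece $\pi^*\osm$ transports to $\osm = \ddbar(r^{2k})$ by (\ref{omegainfty}). For $-cR(h)$ I would argue chart-by-chart: on $U_i$ the formula $-R(h) = k\ddbar\log(1+\sum_{j\neq i}|w^j_{(i)}|^2)$ from the proof of Lemma \ref{lemmakahler} combines with the coordinate identities (which give $w^j_{(i)} = z^j/z^i$ on $\{z^i\neq 0\}$) to give $-R(h) = k\ddbar\log(r^2/|z^i|^2) = k\ddbar\log r^2$, the term $-k\ddbar\log|z^i|^2$ dropping because it is pluriharmonic off $\{z^i=0\}$. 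Since the $U_i$ cover the punctured orbifold and these local expressions patch, on $(\mathbb{C}^n/\mathbb{Z}_k)\setminus\{0\}$ one gets $\omega = \ddbar(r^{2k}) + ck\,\ddbar\log r^2$.

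Then I would just differentiate: $\frac{\partial^2 r^{2k}}{\partial z^i\partial\ov{z^j}} = kr^{2(k-1)}\big(\delta_{ij} + \tfrac{k-1}{r^2}\ov{z^i}z^j\big)$ and $\frac{\partial^2 \log r^2}{\partial z^i\partial\ov{z^j}} = \tfrac{1}{r^2}\big(\delta_{ij} - \tfrac{\ov{z^i}z^j}{r^2}\big)$, and substituting these into $\omega = \ddbar(r^{2k}) + ck\,\ddbar\log r^2$ yields precisely (\ref{hatg}). For (\ref{orbineq}) I would observe that $P_{ij} := \ov{z^i}z^j/r^2$ is the orthogonal projection onto the complex line through $(\ov{z^1},\dots,\ov{z^n})$, so it has eigenvalue $1$ once and $0$ with multiplicity $n-1$; writing the matrix in (\ref{hatg}) as $kr^{2(k-1)}\big(I + (k-1)P\big) + \tfrac{ck}{r^2}(I-P)$, its eigenvalues are $k^2 r^{2(k-1)}$ (multiplicity $1$) and $kr^{2(k-1)} + \tfrac{ck}{r^2}$ (multiplicity $n-1$). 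Both are $\ge kr^{2(k-1)}$ because $k\ge 1$ and $c>0$, which gives the lower bound; and on $B$, where $r\le 1$, multiplying each eigenvalue by $r^2$ and using $r^{2k}\le 1$ bounds both by $\tfrac{C}{r^2}$ for a constant $C$ depending only on $k$ and $c$, which gives the upper bound.

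The individual computations are routine; the only step demanding care is the first one, namely verifying that the curvature term $-cR(h)$ — which is singular along $E$ — pushes forward to the globally defined smooth form $ck\,\ddbar\log r^2$, with the local potentials on the various $U_i$ matching up consistently.
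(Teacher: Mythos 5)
Your proof is correct and follows the same route as the paper: the paper simply records that, away from the origin, $\omega = \ddbar(r^{2k}) + ck\,\ddbar\log r^2$ and then invokes a ``routine computation,'' which is exactly your Hessian calculation and eigenvalue decomposition via $P = \ov{z}\,z^{\ast}/r^2$. One small caveat: your eigenvalue analysis gives eigenvalues $k^2 r^{2(k-1)}$ and $kr^{2(k-1)} + ck/r^2$, so on the unit ball the sharp constant in the upper bound is $\max(k^2,\,k+ck)$, which for $k\ge 2$ and $c$ small exceeds the paper's stated value $2k-1+ck$; the paper's explicit constant appears to be a slip, though this is harmless since only the qualitative bound $\omega \le Cr^{-2}\oeuc$ is used later.
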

\begin{proof}
From the definition of $\omega$ and (\ref{oeucdef}) we have, away from $0$,
\begin{equation}
\omega = \ddbar (|\underline{z}|^{2k}) + ck \ddbar \log |\underline{z}|^2,
\end{equation}
and the lemma follows from a routine computation. \qed
\end{proof}

Note that we can rewrite (\ref{orbineq}) on $\pi^{-1}(B \setminus \{ 0 \})$ as follows:
\begin{equation} \label{orbineqs}
k |s|_h^{2(k-1)/k} \pi^* \oeuc \le \omega \le \frac{C}{|s|_h^{2/k}} \pi^* \oeuc.
\end{equation}

Next we discuss the condition (\ref{condition}) in the statement of Theorem \ref{mainthm}.   We have the following lemma.
 
\begin{lemma} \label{lemmaoy}
Under the conditions of Theorem \ref{mainthm}, there exists a smooth orbifold function $f$ on $Y$ such that
\begin{enumerate}
\item[(i)] $\omega_Y : = \omega_{\emph{orb}} + \ddbar f$
is a smooth closed nonnegative orbifold $(1,1)$ form on $Y$ which is positive away the orbifold points $y_1, \ldots, y_p$;
\item[(ii)]  $\pi^* \omega_Y$ is a smooth closed $(1,1)$-form on $X$.
\item[(iii)]  We can choose an Hermitian metric $h$ on the line bundle associated to the divisor $E_1 + \cdots + E_p$ so that for $c>0$ sufficiently small,
\begin{equation}
\omega_X := \pi^*\omega_{Y} - c R(h)
\end{equation}
is K\"ahler on $X$.
\end{enumerate}
\end{lemma}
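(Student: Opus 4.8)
The plan is to reduce the global statement on $Y$ to the local model studied in Lemmas \ref{lemmakahler} and \ref{lemmahato}, using a partition of unity and the freedom to add a smooth orbifold potential. First I would fix, for each orbifold point $y_i$, a local uniformizing chart biholomorphic to a neighborhood of $0 \in \mathbb{C}^n/\mathbb{Z}_{k_i}$, and note that on this chart we have the two candidate nonnegative forms: $\omega_{\textrm{orb}}$ restricted there, and the model form $\osm = \ddbar(|\underline{z}|^{2k_i})$ from (\ref{omegainfty}). Since both represent (locally) the trivial class — any closed $(1,1)$-form on a ball is $\ddbar$ of a local potential — on a slightly smaller neighborhood $\osm$ and $\omega_{\textrm{orb}}$ differ by $\ddbar$ of a smooth orbifold function (here I use that $\omega_{\textrm{orb}}$ is smooth in the orbifold sense, hence its local potential is a smooth $\mathbb{Z}_{k_i}$-invariant function, and $\osm$ has the special rotationally-symmetric form $\mu(r^{2k_i})$ whose pull-back is smooth on $L$ by the discussion around (\ref{r2k})).

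Next I would patch these local potentials together. Let $\rho_i$ be a smooth orbifold cutoff function supported near $y_i$, equal to $1$ near $y_i$, with the $\rho_i$ having disjoint supports (possible since the $E_i$, hence the $y_i$, are disjoint). Define $f = \sum_i \rho_i \varphi_i$, where $\varphi_i$ is the local potential with $\omega_{\textrm{orb}} + \ddbar \varphi_i = \osm^{(i)}$ near $y_i$ (the model form centered at $y_i$). Then $\omega_Y := \omega_{\textrm{orb}} + \ddbar f$ is a smooth closed orbifold $(1,1)$-form on $Y$; near each $y_i$ it coincides with $\osm^{(i)}$, which is nonnegative and vanishes only at $y_i$; and away from $\bigcup_i \textrm{supp}\,\rho_i$ it equals $\omega_{\textrm{orb}}$, which is a genuine K\"ahler metric. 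On the transition annuli, $\omega_Y = \omega_{\textrm{orb}} + \ddbar(\rho_i \varphi_i)$; since $\rho_i \varphi_i$ is a fixed smooth function and $\omega_{\textrm{orb}}$ is strictly positive there, after rescaling $\omega_{\textrm{orb}}$ (equivalently, choosing the original $\omega_{\textrm{orb}}$ large, which we may since only its class is prescribed — or more carefully, shrinking the supports of the $\rho_i$ so the $C^2$-norm of $\rho_i\varphi_i$ is controlled) we get $\omega_Y > 0$ there as well. This gives (i). Part (ii) is then immediate: near $y_i$, $\pi^*\omega_Y = \pi^*\osm^{(i)}$, which is smooth on $X$ by Lemma \ref{lemmakahler}'s input (the discussion after (\ref{r2k})), and away from the $y_i$, $\pi$ is a biholomorphism so $\pi^*\omega_Y$ is automatically smooth.

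For (iii), take $h$ to be a Hermitian metric on the line bundle of $E_1+\cdots+E_p$ which, in the neighborhood of each $E_i$ identified with the local model $L$, restricts to the metric $h$ of (\ref{he}) (for the $\mathcal{O}(-k_i)$ bundle); this is possible by patching with a partition of unity, and $-R(h) \ge 0$ near each $E_i$. Away from the $E_i$, $\pi^*\omega_Y = \omega_Y$ is strictly positive, so for $c$ small $\omega_X = \pi^*\omega_Y - cR(h)$ stays positive there by compactness. Near $E_i$, $\pi^*\omega_Y - cR(h) = \pi^*\osm^{(i)} - cR(h)$, which is exactly the form shown to be K\"ahler in Lemma \ref{lemmakahler}. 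Choosing $c$ smaller than the threshold coming from the (finitely many) neighborhoods and from the complement completes the proof. The main obstacle is the bookkeeping in (i): ensuring positivity of $\omega_Y$ on the transition regions where we have only a perturbation of $\omega_{\textrm{orb}}$ by a fixed $\ddbar$ of a cutoff-times-potential — this is handled by shrinking cutoff supports (so the perturbation is $C^2$-small relative to $\omega_{\textrm{orb}}$), but it must be done before fixing $c$ in (iii), so the order of choices matters. \qed
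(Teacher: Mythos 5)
The overall skeleton you describe (work locally near each $y_i$, interpolate between $\omega_{\textrm{orb}}$ and the model form $\omega_\infty = \ddbar|\underline{z}|^{2k_i}$ via a cutoff, then verify (ii) and (iii) from the local model) is the right idea and matches the paper's strategy. However, the step you yourself flag as "the main obstacle" -- positivity of $\omega_Y$ on the transition annulus -- is where your argument genuinely fails, and the device the paper uses to fix it is not a bookkeeping refinement but the crucial new idea of the proof.

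Concretely, near $y_i$ the orbifold metric has a local potential $\varphi = |\underline{z}|^2 + O(|\underline{z}|^3)$, while the model potential is $|\underline{z}|^{2k}$. For $k \ge 2$ (which is the case at issue; $k=1$ is already in \cite{SW2}) the difference is $\varphi_i := |\underline{z}|^{2k} - \varphi = -|\underline{z}|^2 + O(|\underline{z}|^{3})$, so on the annulus $B_\delta \setminus B_{\delta/2}$ one has $|\varphi_i| \sim \delta^2$, $|\nabla\varphi_i| \sim \delta$, $|\nabla^2\varphi_i| \sim 1$. A cutoff $\rho_i$ supported in $B_\delta$ and equal to $1$ on $B_{\delta/2}$ necessarily satisfies $|\nabla\rho_i| \sim 1/\delta$, $|\nabla^2\rho_i| \sim 1/\delta^2$. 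Hence in $\ddbar(\rho_i\varphi_i)$ the cross terms $\partial\rho_i\wedge\dbar\varphi_i + \partial\varphi_i\wedge\dbar\rho_i$ are $O((1/\delta)\cdot\delta) = O(1)$ and the term $\varphi_i\,\ddbar\rho_i$ is $O(\delta^2\cdot 1/\delta^2) = O(1)$. Shrinking $\delta$ does \emph{not} make these small; the perturbation stays $O(1)$ in $C^0$ with no sign control, and there is no reason for $\omega_{\textrm{orb}} + \ddbar(\rho_i\varphi_i)$ to remain nonnegative. Your fallback of enlarging $\omega_{\textrm{orb}}$ is also unavailable, since its cohomology class is pinned down by the hypothesis $[\omega_0]+Tc_1(K_X) = [\pi^*\omega_{\textrm{orb}}]$.

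What the paper does instead is interpolate not between $\varphi$ and $|\underline{z}|^{2k}$ but between $\varphi$ and the regularized psh function $(\ve + |\underline{z}|^{2k})^{1/k}$, with the parameter choice $\ve = \delta^{2k}/A$, $A$ large. This function behaves like $|\underline{z}|^{2k}$ (so its $\ddbar$ is a small multiple of $\omega_\infty$) near the origin, but on the annulus $B_\delta\setminus B_{\delta/2}$ it is within $O(1/A) + O(\delta)$ of $|\underline{z}|^2 \approx \varphi$ \emph{together with two derivatives in the rescaled sense}: the paper shows $|(\ve+|\underline{z}|^{2k})^{1/k} - \varphi| \le \delta^2/C'$, $|\nabla(\cdots)| \le \delta/C'$, $|\nabla^2(\cdots)| \le 1/C'$ there, with $C'$ as large as one likes. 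These scalings exactly cancel the $1/\delta$, $1/\delta^2$ blow-up of the cutoff, so the total perturbation is $O(1/C')$ and can be absorbed by the positivity of $\omega_{\textrm{orb}}$. A further small term $\eta\,\psi\,|\underline{z}|^{2k}$ is then added to guarantee $\omega_Y \ge \eta\,\omega_\infty$ near $y_i$, which is what feeds into Lemma \ref{lemmakahler} for part (iii). Without this $\ve$-regularization step your proof does not close.
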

\begin{proof}
Assume for simplicity that $\pi$ blows down only a single $(-k)$ exceptional divisor $E$ to a point $y \in Y$.  By the results of \cite{SW2} we may suppose $k \ge 2$.   We can choose coordinates so that, working  in an orbifold uniformization chart  of the orbifold point $y$, which we identify with an open set $U$ in $\mathbb{C}^n$, 
\begin{equation}
\oorb = \ddbar \varphi, \ \textrm{with} \  \varphi = |\underline{z}|^2 + \textrm{O}(|\underline{z}|^3).
\end{equation}
For a small constant $\delta>0$ we choose a smooth radially symmetric cut-off function $\psi$  such that $\psi \equiv 1$ on $B_{\delta/2}$ and $\psi \equiv 0$ outside $B_{\delta}$, where $B_R$ denotes the ball of radius $R$ in $\mathbb{C}^n$.   Extend $\psi$ to be a smooth function  $Y$ which vanishes outside $B_{\delta}$.
 We may assume that $ \| \psi \| \le 1$,  $\| \psi\|_{C^1(Y)} \le C/\delta$ and $\| \psi \|_{C^2(Y)}  \le C/\delta^2$ for some uniform constant $C$.  Define for $\varepsilon>0$ to be determined later,
\begin{equation}
\omega_{\textrm{orb}, \ve} = \oorb + \ddbar (\psi \cdot (\ve + |\underline{z}|^{2k})^{1/k} - \psi \cdot \varphi).
\end{equation}
Since $\pi^*(\ve + |\underline{z}|^{2k})^{1/k}$ is a smooth function on $X$, we see that $\pi^* \omega_{\textrm{orb}, \ve}$ is a smooth $(1,1)$ form on $X$.  Moreover, on $B_{\delta/2}$ we have
\begin{equation}
\omega_{\textrm{orb}, \ve} =  \ddbar  (\ve + |\underline{z}|^{2k})^{1/k},
\end{equation}
which is nonnegative on $B_{\delta/2}$ and positive on $B_{\delta/2} \setminus \{ 0 \}$.  
Compute
\begin{align} \nonumber
\partial_i \partial_{\ov{j}} \left( ( \ve + |\underline{z}|^{2k})^{1/k}  - |\underline{z}|^2 \right)  = & \ (\ve + |\underline{z}|^{2k})^{(1-2k)/k} |\underline{z}|^{2(k-1)} \left( \ve (\delta_{ij} + (k-1) \frac{\ov{z^i} z^j}{|\underline{z}|^2}) \right. \\
 & \left.   + \   |\underline{z}|^{2k} \delta_{ij} - (\ve + |\underline{z}|^{2k})^{(2k-1)/k} |\underline{z}|^{-2(k-1)} \delta_{ij} \right), \\
 \partial_{\ov{j}} \left( (\ve + |\un{z}|^{2k})^{1/k} - |\un{z}|^2 \right) = & \ z_j \left( (\ve + |\un{z}|^{2k})^{(1-k)/k} | \un{z}|^{2(k-1)} -1 \right)
\end{align}
We choose $\ve = \delta^{2k}/A$ for some large constant $A$.  Then in $B_{\delta} \setminus B_{\delta/2}$ we have
\begin{equation}
\frac{|\un{z}|^{2k}}{A} \le \ve \le \frac{ |\un{z}|^{2k}}{2^{-2k}A},
\end{equation}
and a straightforward computation gives, in $B_{\delta} \setminus B_{\delta/2}$,
\begin{align}
\left| \partial_i \partial_{\ov{j}} \left( ( \ve + |\underline{z}|^{2k})^{1/k}  - \varphi \right) \right| & \le \frac{1}{C'} \\
| \partial_{\ov{j}} \left( (\ve + |\un{z}|^{2k})^{1/k} - \varphi \right) | & \le \frac{\delta}{C'} \\
| (\ve + |\un{z}|^{2k})^{1/k} - \varphi | & \le \frac{\delta^2}{C'},
\end{align}
where we can make $C'$ as large as we like by choosing $\delta$ sufficiently small and $A$ sufficiently large.

Thus we can make the norm
\begin{equation}
\| \ddbar (\psi \cdot (\ve + |\underline{z}|^{2k})^{1/k} - \psi \cdot \varphi) \|_{C^0(Y\setminus B_{\delta/2})}
\end{equation}
small enough so that $\omega_{\textrm{orb}, \ve}$ is positive definite on $Y \setminus B_{\delta/2}$.    Hence $\omega_{\textrm{orb}, \ve}$ is a smooth closed nonnegative $(1,1)$ form on $Y$ which is positive away from $0$ and which pulls back via $\pi$ to a smooth closed $(1,1)$ form on $X$.  To ensure that we have (iii) we make one final modification.

For a small constant $\eta>0$, define 
\begin{equation}
f = \psi \cdot (\ve + |\underline{z}|^{2k})^{1/k} - \psi \cdot \varphi + \eta \, \psi \cdot |\un{z}|^{2k}
\end{equation}
and
\begin{equation}
\omega_Y = \oorb + \ddbar f = \omega_{\textrm{orb}, \ve} + \ddbar ( \eta \, \psi \cdot |\un{z}|^{2k}).
\end{equation}
Then if $\eta$ is sufficiently small we see that $\omega_Y$ satisfies (i) and (ii).  Moreover, we see that $\omega_Y \ge \eta \ddbar |\un{z}|^{2k}$ in a neighborhood of $y$.  Applying Lemma \ref{lemmakahler} we can choose a Hermitian metric $h$ on  $[E]$ in a neighborhood of $E$ so that
$\pi^*\omega_Y - c R(h)$ is K\"ahler in that neighborhood  for any $c>0$.  A simple patching argument gives (iii).
\qed 
\end{proof}

Observe then that $\pi^*\oorb = \pi^* \omega_Y - \ddbar (\pi^* f)$ is a closed positive (1,1) current and thus the cohomology class  $[\pi^* \oorb]$ is well-defined and equal to 
 $[\pi^* \omega_Y]$.  Hence we can replace Condition (\ref{condition}) in the statement of Theorem \ref{mainthm} by
 \begin{equation}
[\omega_0] + T c_1(X) = [\pi^* \omega_Y], 
 \end{equation}
 where $\omega_Y$ satisfies (i)-(iii) of Lemma \ref{lemmaoy}.

\pagebreak[3]
\section{Estimates away from the exceptional divisors} \label{sectestimates}

In this section we begin the proof of Theorem \ref{mainthm} by recalling the setup and basic estimates as in \cite{SW2}.  First, we rewrite the K\"ahler-Ricci flow as a flow of potentials.  Let $\omega_Y$ be the $(1,1)$-form on $Y$ constructed in Section \ref{sectlocalmodel}.
Define reference $(1,1)$-forms $\hat{\omega}_t$ on $X$ for $t\in [0,T]$ by
\begin{equation} \label{referencemetric}
\hat{\omega}_t = \frac{1}{T} ((T-t) \omega_0 + t\pi^* \omega_Y).
\end{equation}
Then for $t \in [0,T)$, $\hat{\omega}_t$ is a K\"ahler form in the cohomology class $[\omega(t)]$.
Let $\Omega$ be the unique volume form on $X$ with $\ddbar \log \Omega = \ddt{} \hat{\omega}_t$ and $\int_X \Omega=1$.  Let $\varphi=\varphi(t)$ be the solution of
\begin{equation}
\ddt{\varphi} = \log \frac{ (\hat{\omega}_t + \ddbar \varphi)^n}{\Omega}, \quad \varphi|_{t=0} =0.
\end{equation}
Then $\omega = \hat{\omega}_t + \ddbar \varphi$ solves the K\"ahler-Ricci flow (\ref{krf0}).  Moreover:

\begin{lemma} \label{basicestimates} We have
\begin{enumerate}
\item[(i)]  There exists a uniform $C>0$ such that $\| \varphi\|_{L^{\infty}} + \dot{\varphi} \le C$.
\item[(ii)]  There exists a uniform $c>0$ such that $\omega(t) \ge c \pi^*\omega_Y$.
\item[(iii)]  For every compact $K \subset X \setminus \bigcup_{i=1}^p E_i$ there exist constants $C_{K,j}$ for $j=0,1,2, \ldots$ such that
$\| \omega(t) \|_{C^j(K)} \le C_{K,j}$. 
\item[(iv)] As $t \rightarrow T^-$, $\varphi(t)$ converges pointwise on $X$ to a bounded function $\varphi_T$ satisfying 
\begin{equation}
\omega_T:= \pi^* \omega_Y + \ddbar \varphi_T \ge 0,
\end{equation}
which is a smooth K\"ahler form outside $E_1, \ldots, E_p$.
\item[(v)] $\omega(t)$ converges weakly in the sense of currents, and in $C^{\infty}$ on compact subsets of $X \setminus  \bigcup_{i=1}^p E_i$,  to the closed positive $(1,1)$-current $\omega_T$.
\end{enumerate}
\end{lemma}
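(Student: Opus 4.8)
The plan is to follow the corresponding estimates of \cite{SW2} almost verbatim; the guiding observation is that all of (i)--(v) take place upstairs on the smooth manifold $X$, and that the only properties of $\pi^*\omega_Y$ that enter are those provided by Lemma \ref{lemmaoy}: it is a smooth closed nonnegative $(1,1)$-form on $X$, positive off $\bigcup_{i=1}^p E_i$, and $\omega_X := \pi^*\omega_Y - c R(h)$ is K\"ahler. Since these hold for every choice of the $k_i$, nothing in the argument distinguishes the general case from the case $k_i = 1$ of \cite{SW2}. First I would rewrite the flow, as in Section \ref{sectestimates}, as the Monge--Amp\`ere flow $\dot\varphi = \log\big((\hat{\omega}_t + \ddbar\varphi)^n/\Omega\big)$, $\omega = \hat{\omega}_t + \ddbar\varphi$.

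For (i), I would first record the upper bounds $\varphi \le C$ and $\dot\varphi \le C$, which follow from the parabolic maximum principle exactly as in \cite{TZha, SoT3, SW2} (using only $\hat{\omega}_t^n \le C\,\Omega$ with $\Omega$ a fixed volume form), and then deduce the two-sided bound $\|\varphi\|_{L^\infty} \le C$ from the identity
\begin{equation*}
(\partial_t - \Delta_\omega)\big((T-t)\dot\varphi + \varphi + nt\big) = \tr{\omega}{\pi^*\omega_Y} \ge 0,
\end{equation*}
the minimum principle, the value at $t = 0$, and $\dot\varphi \le C$; note this step uses nothing about $\pi^*\omega_Y$ beyond semipositivity. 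For (ii), I would run the parabolic Schwarz lemma, applied as in \cite{SW2} to a quantity of the form $\log\tr{\omega}{\pi^*\omega_Y} - A\big((T-t)\dot\varphi + \varphi + nt\big)$; for $A$ large this has negative $(\partial_t - \Delta_\omega)$ where $\pi^*\omega_Y > 0$, so (since it is $-\infty$ on $\bigcup_i E_i$) it attains its maximum at $t = 0$, giving $\tr{\omega}{\pi^*\omega_Y} \le C$ and hence $\omega(t) \ge c\,\pi^*\omega_Y$. The input is a one-sided curvature-type bound for $\pi^*\omega_Y$, available because $\pi^*\omega_Y$ is smooth on $X$, equal to the pull-back of the smooth form $\omega_Y$ away from $\bigcup_i E_i$, and described near $\bigcup_i E_i$ by the explicit local model of Section \ref{sectlocalmodel} (Lemmas \ref{lemmakahler} and \ref{lemmahato}).

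For (iii), I would note that on a compact $K \subset X \setminus \bigcup_i E_i$ one has $\pi^*\omega_Y \ge c_K\,\omega_X$, so (ii) gives $\omega(t) \ge c_K'\,\omega_X$ there; together with $\omega^n = e^{\dot\varphi}\Omega \le e^C\Omega$ this forces $c_K\,\omega_X \le \omega(t) \le C_K\,\omega_X$ and $|\dot\varphi| \le C_K$ on $K$, and the $C^j(K)$ bounds then come from standard local parabolic estimates (Yau's second-order estimate, Evans--Krylov or Calabi's third-order estimate, Schauder bootstrapping), as in \cite{SW2}. For (iv), since $\dot\varphi \le C$ the function $\varphi(t) - Ct$ is non-increasing and, by (i), bounded below, so $\varphi(t)$ converges pointwise on $X$ to a bounded $\varphi_T$; then $\hat{\omega}_t \to \pi^*\omega_Y$ in $C^\infty$ and $\varphi(t) \to \varphi_T$ in $L^1$ give $\omega(t) \to \omega_T := \pi^*\omega_Y + \ddbar\varphi_T$ weakly, with $\omega_T \ge 0$, while on compact subsets of $X \setminus \bigcup_i E_i$ the bounds of (iii) upgrade this to $C^\infty$ convergence and $\omega_T \ge c\,\pi^*\omega_Y > 0$; statement (v) is then just the conjunction of these two modes of convergence. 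The main point requiring attention --- and the only step not a line-by-line transcription of \cite{SW2} --- is the verification that the degenerate form $\pi^*\omega_Y$ still carries the geometric information used in (ii) and (iii), namely the one-sided curvature-type bound and the equivalence with $\omega_X$ off $\bigcup_i E_i$; this is precisely what Section \ref{sectlocalmodel} (Lemma \ref{lemmaoy} and formula \eqref{hatg}) is designed to supply, and I expect everything else to be formal.
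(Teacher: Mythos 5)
Your proposal matches the paper's approach: the paper's own proof of Lemma \ref{basicestimates} is essentially a citation to Tian--Zhang \cite{TZha} (together with \cite{SoT1, Zha1, SW1, SW2}), with the single observation that the only new point is in (ii), where the same maximum-principle argument bounding $\log \tr{\omega(t)}{\pi^*\omega_Y}$ from above still works because $\pi$ is holomorphic away from the exceptional divisors and the quantity tends to $-\infty$ along the $E_i$ (the latter being the case when $k_i \ge 2$, which by Lemma \ref{lemmaoy} is the only new situation). Your sketch reproduces exactly the standard arguments behind that citation for (i) and (iii)--(v) and correctly isolates the same two facts for (ii), so it is the paper's proof at a finer level of detail.
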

\begin{proof}  This lemma is essentially due to Tian-Zhang \cite{TZha} (see also \cite{SoT1, Zha1} and the expositions in \cite{SW1, SW2}).  The only minor difference here occurs in (ii) since $\pi$ is no longer a holomorphic map between manifolds.  However, the same maximum principle argument for  bounding $\log \tr{\omega(t)}{ \pi^* \omega_Y}$ from above can be applied  since $\pi$ is holomorphic away from the exceptional divisors and 
 this quantity tends to negative infinity along $E_1, \ldots, E_p$.   \qed
\end{proof}

Part (i) of Theorem \ref{mainthm} then follows immediately from the lemma.  To establish parts (ii)-(v) we need estimates near the exceptional divisors $E_i$.

\pagebreak[3]
\section{Estimates near the exceptional divisor} \label{secthirz}

In this section we establish some estimates needed for parts (ii) and (iii) of Theorem \ref{mainthm}.  For simplicity we assume that there is only one exceptional divisor $E$ which blows down via $\pi$ to the orbifold point  $y_0 \in Y$.  To obtain estimates near $E$, we will work in a local uniformizing chart around the orbifold point, which we identify with the unit ball $B$ in $\mathbb{C}^n$.  As in Section \ref{sectlocalmodel}, we write $B_R$ for the ball in $\mathbb{C}^n$ of radius $R>0$ and $\oeuc$ for the Euclidean metric, which is uniformly equivalent in $B$ to $\oorb$.  We write $\omega(t)$ and $\omega_0$ for their images in $B \setminus \{ 0 \}$ under the map $\pi$ (which is a biholomorphism from $\pi^{-1} (B \setminus \{0 \})$ to $B \setminus \{0 \}$).  We use coordinates $z^1, \ldots, z^n$ on $\mathbb{C}^n$ and write $r = \sqrt{|z^1|^2 + \cdots + | z^n |^2}$.

The key estimates are contained in the following lemma (cf. Lemmas 2.5, 2.6 of \cite{SW2}).

\begin{lemma} \label{lemmaest2}
Let $\omega=\omega(t)$ be a solution of the K\"ahler-Ricci flow (\ref{krf0}) on $X$, for $t \in [0,T)$, under the assumptions of Theorem \ref{mainthm}.
Then there exist uniform constants $C$, $\delta'>0$ and $R_0=R_0(n,k) \in (0,1)$ such that on $B_{R_0} \setminus \{0 \}$,
\begin{enumerate}
\item[(i)] $\displaystyle{\omega \le \frac{C}{r^2} \, \omega_{\emph{Eucl}}.}$
\item[(ii)]$\displaystyle{\omega \le \frac{C}{r^{2(1-\delta')}} (\omega_0 + \omega_{\emph{Eucl}}).}$
\item[(iii)] $\displaystyle{
|W|^2_{g} \le \frac{C}{r^{2(1-\delta)}}}$ for $\delta = k/(k+1)>0$ and
where $W = \sum_{i=1}^n \left( \frac{x_i}{r} \frac{\partial}{\partial x_i} + \frac{y_i}{r} \frac{\partial}{\partial y_i} \right)$, the unit length radial vector field with respect to $g_{\emph{Eucl}}$, with $z_i = x_i + \sqrt{-1} y_i$.
\end{enumerate}
\end{lemma}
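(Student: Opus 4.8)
The plan is to run a maximum principle argument for the quantity $\log\tr{\omega}{\oeuc}$ (or a suitable modification thereof) on the punctured ball, following the strategy of Lemmas 2.5 and 2.6 of \cite{SW2} but keeping careful track of the weights $r^{2(k-1)}$ and $r^{-2}$ coming from Lemma \ref{lemmahato}. The starting point is the standard Aubin--Yau/Chern--Lu type computation: along the K\"ahler--Ricci flow, $\left(\ddt{} - \Delta_\omega\right)\log\tr{\omega}{\oeuc} \le C\,\tr{\omega}{\oeuc}$, since $\oeuc$ is flat (so only the positive bisectional-curvature terms of $\oeuc$, which vanish, and a $-\ric(\omega)$ term that is controlled by $\Delta_\omega\log\tr{\omega}{\oeuc}$ survive). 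The difficulty is that $\tr{\omega}{\oeuc}$ is not bounded — it blows up at $E$ — so one cannot apply the maximum principle directly on $B_{R_0}$; one must subtract off a barrier that absorbs the blow-up near $0$ and dominates on the boundary sphere $\partial B_{R_0}$.

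For part (i), I would consider $Q = \log\tr{\omega}{\oeuc} - A\varphi + \log|s|^2_h$ where $\varphi$ is the potential from Section \ref{sectestimates}, $s$ is the section of $[E]$ cutting out $E$ with $|s|^2_h = \pi^* r^{2k}$ (equation (\ref{r2k})), and $A$ is a large constant. The term $-A\varphi$ is the usual device to generate a good negative term $-A\tr{\omega}{\oeuc}$ (using $\omega \ge c\,\pi^*\omega_Y \ge c'\,r^{2(k-1)}\oeuc$ from Lemma \ref{basicestimates}(ii) and (\ref{orbineq})) that beats the $+C\tr{\omega}{\oeuc}$ on the right-hand side, at the cost of a bounded error and a term involving $\Delta_\omega\varphi = \tr{\omega}{\omega} - \tr{\omega}{\hat\omega_t}$. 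The term $\log|s|^2_h$ is subharmonic away from $E$ up to a bounded curvature correction ($\ddbar\log|s|^2_h = -R(h)$, which is bounded), and crucially $\log|s|^2_h \to -\infty$ along $E$, so $Q$ cannot attain its maximum on $E$; combined with the $C^\infty$ bounds on compact subsets of $X \setminus E$ (Lemma \ref{basicestimates}(iii)) to control $Q$ on $\partial B_{R_0}$, the maximum of $Q$ on $\overline{B_{R_0}}\setminus E$ is either on $\partial B_{R_0}$ or at an interior point where the evolution inequality applies. Running the maximum principle gives $\tr{\omega}{\oeuc} \le C / |s|^2_h \sim C/r^{2k}$ — wait, that is weaker than claimed, so in fact one should use the weight $\log|s|^{2/k}_h = \frac1k\log|s|^2_h$ instead, which pulls down to $\log r^2$ and yields exactly $\omega \le (C/r^2)\oeuc$ after exponentiating. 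One must check that $\ddbar\log|s|^{2/k}_h = -\frac1k R(h)$ is still bounded — it is — so the argument goes through verbatim with this rescaled barrier.

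For part (ii), the bound in (i) combined with $\omega \ge c\,\oorb \ge c'\,r^{2(k-1)}\oeuc$ already gives $\det\omega / \det\oeuc \le C r^{-2n}\cdot(\text{stuff})$, but the sharper statement with the $\omega_0$ term and the small exponent $\delta'$ requires interpolating: one writes $\omega \le (\text{bounded})\cdot(\tr{\omega}{\omega_0 + \oeuc})^{?}$ and feeds in (i) to bootstrap. Concretely I expect to run a second maximum principle, now for $\log\tr{\omega}{(\omega_0+\oeuc)} - A\varphi + \epsilon\log|s|^2_h$ with a \emph{small} exponent $\epsilon = \delta'$ on the weight, using that $\omega_0+\oeuc$ has bounded geometry on $B_{R_0}$ (again only needing an upper bound on its bisectional curvature, which holds since it is a fixed smooth metric) and that the $-A\varphi$ term together with Lemma \ref{basicestimates}(ii) still dominates; the gain is that a small power of $|s|^2_h$ suffices to kill the boundary and $E$ contributions because the blow-up rate we now need to beat is milder. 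Part (iii) is the volume-type estimate on the radial derivative: since $|W|^2_g = g(W,\bar W)$ and $W$ is $\oeuc$-unit, $|W|^2_g \le \tr{\omega}{\oeuc}$ trivially, but that only gives $r^{-2}$, not $r^{-2(1-\delta)}$ with $\delta = k/(k+1)$; the improvement comes from observing that $W$ is (up to a bounded factor) the real part of the radial $(1,0)$ vector field $\sum z^i \partial_i / r$, whose $\omega$-norm is controlled by the $\ov{z^i}z^j$ component of $\omega$, and from (\ref{hatg}) that component carries an \emph{extra} power of $r^{2(k-1)}$ relative to the worst $r^{-2}$ direction — so one should combine the lower bound $\omega \ge c\,\pi^*\omega_Y$ with the explicit form (\ref{hatg}) and a Cauchy--Schwarz/interpolation between the radial and tangential blow-up rates, the exponent $k/(k+1)$ emerging from balancing $r^{2(k-1)}$ against $r^{-2}$.

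The main obstacle I anticipate is \textbf{step (iii)} and, within steps (i)--(ii), the bookkeeping of exactly which power of $|s|_h$ to put in the barrier so that (a) its $\ddbar$ stays bounded, (b) it still diverges to $-\infty$ on $E$, and (c) after exponentiating one lands on the sharp exponent in $r$; the radial estimate (iii) is genuinely more delicate because the naive bound loses the factor one needs, and recovering it requires exploiting the anisotropy of the model metric (\ref{hatg}) rather than treating $\omega$ as a scalar multiple of $\oeuc$. I would also need to verify at the outset that all these maximum-principle arguments are legitimate on the non-compact punctured ball, which is handled precisely by the $\log|s|^2_h \to -\infty$ trick ruling out escape to $E$ together with the interior estimates of Lemma \ref{basicestimates}(iii) pinning down the boundary values on $\partial B_{R_0}$.
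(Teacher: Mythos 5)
Your plan for parts (i) and (ii) is in the right spirit, but it has a gap that the paper closes with an $\varepsilon$-regularization you don't include. You correctly identify that the barrier should be $\log|s|_h^{2/k}$ rather than $\log|s|_h^2$; however, with the exponent \emph{exactly} $2/k$ the quantity you propose to maximize does not necessarily tend to $-\infty$ along $E$. Indeed, $|s|_h^{2/k}\,\mathrm{tr}_{\oeuc}\omega$ is comparable to $\mathrm{tr}_{\omega_0}\omega$, which for each fixed $t<T$ is bounded but need not vanish at $E$; the barrier exactly cancels the potential blow-up of the trace, leaving the sign of the combination undetermined. The paper resolves this by using $|s|_h^{2(1+\varepsilon)/k}$ inside the logarithm, so the extra factor $|s|_h^{2\varepsilon/k}$ forces $H_\varepsilon\to-\infty$ at $E$ and legitimizes the maximum principle on the punctured ball; one then sends $\varepsilon\to 0$ at the end. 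The paper also takes $\log\mathrm{tr}_{\omega_0}\omega$ as the leading term (so that the leading term itself is bounded above for each $t$, since $\omega_0$ is a genuine K\"ahler metric on $X$) and couples it with $A\log(|s|_h^{2(1+\varepsilon)/k}\mathrm{tr}_{\oeuc}\omega)-A^2\varphi$; the resulting bound yields (i) and (ii) simultaneously (no second maximum principle is needed for (ii): the bound $(\mathrm{tr}_{\omega_0}\omega)^{1/A}\,\mathrm{tr}_{\oeuc}\omega\le C/r^2$ already gives (ii) with $\delta'=1/(A+1)$ after rewriting the trace). You should also double-check your trace conventions: in the paper's notation $\mathrm{tr}_{\omega}\oeuc$ is the trace of $\oeuc$ against $\omega$, which is the quantity bounded by the inverse metric; the Schwarz-type evolution inequality you quote is for $\log\mathrm{tr}_{\tilde\omega}\omega$, with $\tilde\omega$ the fixed reference.

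Part (iii) is the real problem, and your sketch of it is a different and incorrect approach. You propose to get the radial bound from the lower bound $\omega\ge c\,\pi^*\omega_Y$ plus the anisotropy of (\ref{hatg}) and a ``Cauchy--Schwarz/interpolation'' between the radial and tangential rates. But a lower bound on $\omega$ cannot produce an upper bound on $|W|^2_g=g(W,\overline W)$: the anisotropy in (\ref{hatg}) describes the fixed model metric, and there is no a priori reason that the evolving metric inherits this directional structure. The paper instead runs a second, independent maximum principle on
$Q_\varepsilon=\log(|V|_\omega^{2(1+\varepsilon)/k}\,\mathrm{tr}_{\oeuc}\omega)-t$
with $V=z^i\partial_i$; at an interior space-time maximum the parabolic computation (using that $V$ is holomorphic and $\oeuc$ is flat) forces $0\le-1$, a contradiction, so $Q_\varepsilon$ is bounded, giving $(\mathrm{tr}_{\oeuc}\omega)\,|V|_\omega^{2/k}\le C$ after $\varepsilon\to 0$. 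Then the exponent $k/(k+1)$ comes from a simple algebraic trick, not from balancing powers of $r$ in (\ref{hatg}): multiply the bound above by the trivial inequality $|V|_\omega^2\le(\mathrm{tr}_{\oeuc}\omega)\,|V|_{\oeuc}^2=(\mathrm{tr}_{\oeuc}\omega)\,r^2$, so the traces cancel and $|V|_\omega^{2(k+1)/k}\le Cr^2$, i.e.\ $|V|_\omega^2\le Cr^{2k/(k+1)}$, and finally $|W|_g^2\le 2r^{-2}|V|_\omega^2\le Cr^{-2/(k+1)}$. Without this second maximum principle and the cancellation trick, you cannot recover the sharp exponent.
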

\begin{proof}  
First, we recall a well-known local computation that holds for any solution $\omega(t)$ of the K\"ahler-Ricci flow (\ref{krf0}), where $\tilde{\omega}$ is a fixed K\"ahler metric on a manifold $X$.  
From \cite{Y1, A, Cao},
\begin{eqnarray} \nonumber
\left( \ddt{} - \Delta \right) \log \tr{\tilde{\omega}}{\omega} & = & \frac{1}{\tr{\tilde{\omega}}{\omega}  } \left( - g^{i \ov{j}} \tilde{R}_{i \ov{j}}^{ \ \ k \ov{\ell}} g_{k \ov{\ell}} - g^{i \ov{j}} \tilde{g}^{k \ov{\ell}} g^{p\ov{q}} \tilde{\nabla}_i g_{k \ov{q}} \tilde{\nabla}_{\ov{j}} g_{p\ov{\ell}} + \frac{ |\nabla \tr{\tilde{\omega}}{\omega}|^2}{\tr{\tilde{\omega}}{\omega}} \right) \\
& \le &   \frac{1}{\tr{\tilde{\omega}}{\omega}  } \left( - g^{i \ov{j}} \tilde{R}_{i \ov{j}}^{ \ \ k \ov{\ell}} g_{k \ov{\ell}}\right) \le  \tilde{C} \, \tr{\omega}{\tilde{\omega}}, \label{tr1}
\end{eqnarray}
for $\tilde{C}$ depending only on the lower bound of the bisectional curvature of $\tilde{g}$.

Let $s$ be a holomorphic section of the line bundle $[E]$ vanishing to order 1 along $E$, defined locally by (\ref{se}).   We define a Hermitian metric $h$ on the line bundle $[E]$ by Lemma \ref{lemmaoy}.  Then from (\ref{r2k}) we see that
 on $B_{R_0}$ for some uniform $R_0 \in (0,1)$,
\begin{equation}
 |s|^{2}_h = r^{2k}.
\end{equation} 
Moreover,   there exists a $c>0$ sufficiently small and a uniform constant $C>0$ such that on $X$,
\begin{equation}
\hat{\omega}_{t} - c R(h) \ge \frac{1}{C} \omega_0,
\end{equation}
where $R(h) = -\frac{\sqrt{-1}}{2\pi} \partial \ov{\partial} \log h$.

Define a function $H_{\ve}$ on $(\ov{B_{R_0}} \setminus \{0 \}) \times [0,T)$ by
\begin{equation}
H_{\ve} = \log \tr{\omega_0}{\omega} + A \log \left( |s|^{2(1+\ve)/k}_h \tr{\oeuc}{\omega} \right) -A^2 \varphi,
\end{equation}
for $\ve>0$, where $A$ is a constant to be determined.   We see from (\ref{orbineqs}) that $H_{\varepsilon}(x,t) \rightarrow -\infty$ as $x$ approaches $0$, for any $t \in [0,T)$.  From Lemma \ref{basicestimates}, we have uniform estimates for $H_{\ve}$ on the boundary of $B_{R_0}$.    Assume there exists a point  $(x_0, t_0) \in ( B_{R_0} \setminus \{0 \} )\times (0,T)$ such that  $H_{\ve}(x_0,t_0) = \sup_{(\ov{B_{R_0}} \setminus \{ 0 \}) \times [0,t_0]} H_{\ve}$.   
At $(x_0,t_0)$, using (\ref{tr1}),
\begin{equation}
0 \le \left( \frac{\partial}{\partial t} - \Delta \right) H_{\ve} \le C' \tr{\omega}{\omega_0} + A\, \tr{\omega}{ \left(A\hat{\omega}_{t_0} - \frac{(1+\ve)}{k} R(h)\right)} - A^2 \log \frac{\omega^n}{\Omega} + A^2n,
\end{equation}
for some uniform $C'>0$.
Choosing $A$ large enough, we obtain
\begin{equation}
A\left(A\hat{\omega}_{t_0} - \frac{(1+\ve)}{k} R(h)\right) \ge (C'+1) \omega_0.
\end{equation}
Thus at $(x_0,t_0)$ we have
\begin{equation}
\left( A^2 \log \frac{\omega^n}{\Omega} + \tr{\omega}{\omega_0} \right) (x_0, t_0) \le C.
\end{equation}
Since, for any $\kappa>0$, the function $\tau \in (0, \infty) \mapsto \log \tau + \kappa/\tau \in (0,\infty)$ is uniformly bounded from below and tends to infinity as $\tau \rightarrow 0^+$, this  implies that
\begin{equation}
(\tr{\omega_0}{\omega}) (x_0,t_0) \le C.
\end{equation}
Hence, at $(x_0,t_0)$, using Lemma \ref{lemmahato},
\begin{equation}
\omega \le C\omega_0 \le \frac{C}{|s|_h^{2/k}} \oeuc.
\end{equation}
Since $\varphi$ is uniformly bounded by Lemma \ref{basicestimates},  it follows that $H_{\ve}$ is uniformly bounded at $(x_0, t_0)$, independent of $\ve$.  Since we already have estimates for $H_{\ve}$ on the boundary of $B_{R_0}$ and at $t=0$ this gives us a uniform bound for $H_{\ve}$ on $(\ov{B_{R_0}} \setminus \{0 \} ) \times [0, T)$.  Letting $\ve \rightarrow 0$ we have (i) since by (\ref{orbineqs}),  $|s|_h^{2/k} \tr{\oeuc}{\omega} \le C\tr{\omega_0}{\omega}$.  Moreover,
\begin{equation}
(\tr{\omega_0}{\omega})^{1/A} (\tr{\oeuc}{\omega}) \le \frac{C}{r^2},
\end{equation}
and thus
\begin{equation}
(\tr{(\omega_0+\oeuc)}{\omega})^{1+1/A} \le \frac{C}{r^2},
\end{equation}
giving (ii).

For (iii), we consider the holomorphic vector field $V= z^i \frac{\partial}{\partial z^i}$ on $B_{R_0} \setminus \{0 \}$.  Using (\ref{hatg}) we see that 
 there exists a uniform $C>0$ such that
\begin{equation} \label{Vest}
\frac{1}{C} r^{2k}\le |V|^2_{\omega_0} \le C r^{2k}.
\end{equation}
We consider the quantity 
\begin{equation}
Q_{\varepsilon} = \log ( |V|_{\omega}^{2(1+\varepsilon)/k} \tr{\oeuc}{\omega}) -t,
\end{equation}
and observe from (\ref{Vest}) that $Q_{\varepsilon}(x)$ tends to negative infinity as $x$ approaches the origin.  By the same computation as in Lemma 2.6 of (\cite{SW2}), if $Q_{\varepsilon}$ achieves a maximum at a point $(x_0,t_0)  \in B_{R_0} \setminus \{0 \} \times (0,T)$ then
\begin{equation}
0 \le \left( \frac{\partial}{\partial t} - \Delta \right) Q_{\varepsilon}(x_0,t_0) \le -1,
\end{equation}
a contradiction.  Hence $Q_{\varepsilon}$ is uniformly bounded from above, and letting $\varepsilon$ tend to zero we obtain 
\begin{equation}
 (\tr{\oeuc}{\omega}) |V|^{2/k}_{\omega} \le C.
 \end{equation} 
 But since $|V|^2_{\omega} \le (\tr{\oeuc}{\omega}) |V|^2_{\oeuc}$ we can multiply by $|V|_{\omega}^{2/k}$ to get
\begin{equation}
|V|^{2(k+1)/k}_{\omega} \le C |V|^2_{\oeuc} \le C r^{2},
\end{equation}
and hence $|V|^2_{\omega} \le C r^{2k/(k+1)}$.  Then
\begin{equation}
|W|_g^2 \le \frac{2}{r^2} |V|_{\omega}^2 \le \frac{C}{r^{2(1- k/(k+1))}},
\end{equation}
giving (iii).
This completes the proof of the lemma.
 \qed
\end{proof}

Note that  the radial bound (iii) of Lemma \ref{lemmaest2} coincides with the estimate of Lemma 2.6 in \cite{SW2} for $k=1$.  We obtain here a stronger bound if $k>1$.  It is now straightforward to establish:

\begin{lemma} \label{klemma} We have:
\begin{enumerate}
\item[(i)]  For $0< r < R_0$, the diameter of the $(2n-1)$-sphere $S_r$ of radius $r$ in $B_{R_0}$ with respect to the metric induced by $g(t)$ is uniformly bounded from above, independent of $r$.
\item[(ii)]  There exists a uniform constant $C$ such that for any $z \in B_{R_0}$ the length of the radial path $\gamma(\lambda)=\lambda z$ for $\lambda \in (0,1]$ with respect to $g(t)$ or $g_T$ is uniformly bounded from above by $C |z|^{\delta}$ for $\delta=k/(k+1)$.
\end{enumerate}
\end{lemma}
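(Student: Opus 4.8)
\textbf{Proof proposal for Lemma \ref{klemma}.}

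The plan is to deduce both statements directly from the pointwise metric bounds in Lemma \ref{lemmaest2}, integrating them along the appropriate curves. For part (i), I would parametrize a great-circle path on the sphere $S_r$ by arc length with respect to $g_{\textrm{Eucl}}$; such a path has Euclidean length at most $\pi r$ and its Euclidean-unit tangent vector is orthogonal to the radial direction. The estimate (i) of Lemma \ref{lemmaest2} gives $\omega \le (C/r^2)\oeuc$, so the $g(t)$-length of any tangent vector of Euclidean length one is at most $C/r$; integrating over a path of Euclidean length $\le \pi r$ yields a $g(t)$-length bounded by $\pi C$, a constant independent of $r$. Since any two points of $S_r$ can be joined within $S_r$ by such a path, the diameter of $(S_r, g(t))$ is uniformly bounded. (One should note that although $B_{R_0}$ is a uniformizing chart, the sphere $S_r$ for $r<R_0$ stays away from the orbifold point, so all quantities are genuinely smooth there and no orbifold subtlety arises; the bound is in particular independent of $t$.)

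For part (ii), I would use the radial estimate (iii) of Lemma \ref{lemmaest2}. Along the radial path $\gamma(\lambda) = \lambda z$ for $\lambda \in (0,1]$, the velocity vector $\dot\gamma(\lambda)$ points in the radial direction and has Euclidean length $|z|$, so $\dot\gamma(\lambda) = |z|\,W$ evaluated at the point $\lambda z$, where $W$ is the Euclidean-unit radial field. Hence $|\dot\gamma(\lambda)|_{g(t)} = |z|\,|W|_{g(t)} \le |z| \cdot C/(\lambda|z|)^{1-\delta}$ by (iii), with $\delta = k/(k+1)$, using that the Euclidean radius at $\gamma(\lambda)$ is $\lambda|z|$. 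The length of $\gamma$ is therefore at most
\begin{equation}
\int_0^1 |z|\, \frac{C}{(\lambda |z|)^{1-\delta}}\, d\lambda = C |z|^{\delta} \int_0^1 \lambda^{\delta - 1}\, d\lambda = \frac{C}{\delta}\, |z|^{\delta},
\end{equation}
which is finite since $\delta>0$, giving the claimed bound $C|z|^\delta$ after adjusting the constant. The identical computation applies with $g_T$ in place of $g(t)$, since the bound (iii) passes to the limit $g_T$ on $B_{R_0}\setminus\{0\}$ (the convergence $g(t)\to g_T$ being smooth on compact subsets away from the origin, the inequality is preserved pointwise on $B_{R_0}\setminus\{0\}$).

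I do not expect a serious obstacle here: the lemma is essentially a bookkeeping consequence of Lemma \ref{lemmaest2}, and the only points requiring a little care are (a) checking that the exponent $1-\delta < 1$ so that the radial integral converges — this is exactly why the improved radial bound for $k \ge 1$ (rather than a bound like $C/r^2$) is needed — and (b) being careful that all the estimates are uniform in $t$ and that the sphere $S_r$ and the radial path avoid the orbifold point so that the smooth-manifold estimates of Lemma \ref{lemmaest2} apply verbatim. The constant $R_0 = R_0(n,k)$ and the uniform constants $C$ are inherited directly from Lemma \ref{lemmaest2}.
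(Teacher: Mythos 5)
Your proof is correct, and it is essentially the same argument as the one the paper relies on (the paper defers to Lemma 2.7 of \cite{SW2}, which for $k=1$ performs exactly this direct integration of the pointwise bounds from the analogue of Lemma \ref{lemmaest2}). The only cosmetic slip is that Lemma \ref{lemmaest2}(iii) bounds $|W|^2_g$, so the pointwise bound you plug in should be $\sqrt{C}/(\lambda|z|)^{1-\delta}$ rather than $C/(\lambda|z|)^{1-\delta}$, and likewise in part (i) the bound $g(t)\le (C/r^2)\geu$ gives $|v|_{g(t)}\le \sqrt{C}/r$ for a Euclidean-unit vector $v$; this only changes the constant and does not affect the conclusion. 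The remark about orthogonality to the radial direction in part (i) is unnecessary, since the bound $\omega\le (C/r^2)\oeuc$ applies to all tangent vectors, not just tangential ones.
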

\begin{proof} See Lemma 2.7 in \cite{SW2}. \qed
\end{proof}



\pagebreak[3]
\section{Estimates on lengths of paths away from $E$}

In this section we prove some  estimates on lengths of paths with respect to $g(t)$.  These are the key technical estimates needed to identify the metric completion of  $(Y', d_{g_T})$ as in part (ii) of Theorem \ref{mainthm}.
We assume we are in the setting of that theorem.  As in the previous section we will assume that there is only 
one exceptional divisor $E$.

We will often work locally.  Write $B$ for a small ball centered at the origin of $\mathbb{C}^n$, and write $\tilde{B}$ for the corresponding neighborhood in the blow-up:
\begin{equation}
\tilde{B} = \{ ([Z_1, \ldots, Z_n], \sigma) \in \mathbb{P}^{n-1} \times B \ | \ \sigma \textrm{ is in the line } \lambda \mapsto (\lambda Z_1^k, \ldots, \lambda Z_n^k) \}.
\end{equation}
We have a blow-down map $\pi: \tilde{B} \rightarrow B/\mathbb{Z}_k$ sending the exceptional divisor $E \cong \mathbb{P}^{n-1}$ to the origin in $B/\mathbb{Z}_k$.  We will write $g(t)$ for the evolving K\"ahler metric, and we may consider this as a metric  on $\tilde{B} \setminus E$ or via $\pi$ as a metric on on $(B/\mathbb{Z}_k) \setminus \{0 \}$. We will usually regard $g(t)$ as a smooth K\"ahler metric on $B \setminus \{0 \}$ which is invariant under the action of  $\mathbb{Z}_k$.

 Denote by $\oX$ the smooth K\"ahler metric on $\tilde{B}$ which in $\mathbb{C}^n$ is given by (cf. Lemma \ref{lemmahato})
\begin{equation}
\oX =  \frac{\sqrt{-1}}{2\pi} \sum_{i,j} \left( k r^{2(k-1)} ( \delta_{ij} + \frac{(k-1)}{r^2} \ov{z^i} z^j ) + \frac{k}{r^2} (\delta_{ij} - \frac{\ov{z^i} z^j}{r^2} ) \right) dz^i \wedge d\ov{z^j}.
\end{equation}
Since $\omega_X$ is smooth on $\tilde{B}$, it is uniformly equivalent to the fixed initial K\"ahler metric $\omega_0$ restricted to $\tilde{B}$.

We begin with a lemma.  The authors thank Zhou Zhang for suggesting that the estimate (\ref{ote}) below should hold.  Note that this simplifies  the argument of Lemma 3.2 in \cite{SW2} (cf. Lemma \ref{lemmaintN} below).  

\begin{lemma}  \label{oXE} There exists a uniform constant $C$ such that for all $t \in [0,T)$,
\begin{equation} \label{ote}
\omega(t)|_{E} \le C \oX|_{E}.
\end{equation}
\end{lemma}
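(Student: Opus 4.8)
The plan is to run a maximum principle argument on $E$ for the quantity $\tr{\omega_X}{\omega}$ restricted to $E$, exploiting the fact that $E\cong\mathbb{P}^{n-1}$ is compact (so no boundary terms arise) and that the normal direction to $E$ only helps. First I would observe that, because $E$ is $\pi$-preimage of the orbifold point and $\omega_X$ is smooth on $\tilde B$, the restriction $\omega_X|_E$ is nothing but (a multiple of) the Fubini--Study metric on $\mathbb{P}^{n-1}$; indeed from the explicit formula for $\omega_X$ one reads off that at points of $E$ (where $y_{(i)}=0$, i.e. $r=0$ in the base direction is not the right way to say it — rather one uses the chart coordinates $\zeta_j=w^j_{(n)}$, $\zeta_n=y_{(n)}$ as in Lemma \ref{lemmakahler}) one has $\omega_X|_E = k\,\ddbar\log(1+\sum_{j=1}^{n-1}|\zeta_j|^2)$, a fixed positive metric independent of $t$. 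So it suffices to bound $\tr{\omega_X}{\omega}$ along $E$.

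The key computation is the parabolic inequality (\ref{tr1}) applied with $\tilde\omega=\omega_X$: at an interior maximum of $\log\tr{\omega_X}{\omega}$ on $\tilde B$ one would get a bound in terms of $\tr{\omega}{\omega_X}$ and the bisectional curvature of $\omega_X$, which is the standard Schwarz-lemma/second-order-estimate mechanism. The subtlety is that $\omega_X$ is only a local model, not globally defined on $X$, so one cannot directly maximize over all of $X$. Instead I would restrict attention to $E$ itself: along $E$, the metric $\omega(t)$ is a genuine smooth K\"ahler metric (the flow is smooth up to $t<T$ on all of $X$, including $E$), and the evolution equation for $\omega(t)|_E$ can be analyzed intrinsically. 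Concretely, one considers $u := \log \tr{\omega_X|_E}{\omega|_E}$ as a function on the compact manifold $E$, depending on $t$, picks a point where $u$ attains its spatial maximum, and runs the maximum principle there. The curvature term that appears involves the bisectional curvature of the fixed metric $\omega_X|_E$ (bounded, since $E$ is compact and $\omega_X|_E$ is a fixed Fubini--Study-type metric) and a term $\tr{\omega}{\omega_X}$ that one absorbs using the lower bound $\omega(t)\ge c\,\pi^*\omega_Y$ from Lemma \ref{basicestimates}(ii) together with the bound $\omega(t)\le (C/r^2)\oeuc$ and $|V|^2_\omega\le Cr^{2k/(k+1)}$ from Lemma \ref{lemmaest2} — these control $\omega$ in all directions near $E$, hence on $E$ by continuity.

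The main obstacle I expect is handling the mixed normal-tangential terms: the Laplacian $\Delta$ in (\ref{tr1}) is the full Laplacian of $\omega(t)$ on $X$ (or $\tilde B$), not the Laplacian along $E$, so when one restricts to $E$ the ``missing'' normal second derivatives must be shown to have a favorable sign or be controllable. This is exactly the point where one uses that $E$ has negative normal bundle $\mathcal{O}(-k)$: the normal Hessian of $\log\tr{\omega_X}{\omega}$ contributes a term involving $R(h)$ (the curvature of $[E]$, which is $\le 0$ in the fiber direction as computed in Lemma \ref{lemmakahler}), so it works in our favor. I would make this precise by adding a small multiple of $\log|s|^2_h$ or of $\varphi$ to the test quantity (as in the proof of Lemma \ref{lemmaest2}), chosen so that the perturbation forces the maximum to occur at $E$ while contributing only a bounded amount, and then letting the perturbation parameter go to zero. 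Once $\tr{\omega_X}{\omega}\le C$ on $E$ uniformly in $t$, inverting gives $\omega|_E\le C\,\omega_X|_E$ as claimed. Alternatively — and this may be cleaner — one notes that $\omega(t)|_E$ represents a $t$-dependent K\"ahler class on $E$ whose cohomology class $[\omega(t)|_E]=[\omega_0|_E]+t\,c_1(K_X)|_E$ stays bounded (since $[\omega_0]+Tc_1(K_X)=[\pi^*\omega_Y]$ pulls back to a class vanishing on $E$, so $[\omega(t)|_E]\to 0$ as $t\to T$), and combines this cohomological boundedness with the smooth estimates of Lemma \ref{basicestimates}(iii) applied on a neighborhood of $E$ minus a shrinking tube — but since that neighborhood degenerates, the maximum principle route above is the safer one to write down in detail.
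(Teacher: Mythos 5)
Your proposed maximum-principle argument does not match the paper's proof, and as written it has a real gap. The paper instead proves the pointwise inequality
\[
\omega \wedge \omega_X^{n-2} \wedge \omega_{\infty} \le C\,\omega_X^{n-1} \wedge \omega_{\infty} \quad \textrm{on } \tilde B,
\]
directly from part (i) of Lemma \ref{lemmaest2} (i.e.\ $\omega \le (C/r^2)\,\omega_{\mathrm{Eucl}}$) together with the explicit lower bound for $\omega_X^{n-1}\wedge\omega_{\infty}$ in terms of $\omega_{\mathrm{Eucl}}^{n-1}\wedge\omega_{\infty}$. Since $\omega_{\infty}$ annihilates $T_pE$, evaluating this wedge inequality at $p\in E$ on a basis adapted to $T_pE$ isolates the tangential trace and gives $\tr{\omega_X|_E}{\omega|_E}\le C$. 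This is a purely pointwise, algebraic argument --- no parabolic maximum principle enters at all.

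The gap in your version is exactly at the step you yourself flag as the ``main obstacle.'' When you restrict $u=\log\tr{\omega_X}{\omega}$ to $E$ and take a spatial maximum on the compact manifold $E$, you only know that the \emph{tangential} Hessian is $\le 0$ there; the inequality (\ref{tr1}) involves the \emph{full} Laplacian $\Delta_{g(t)}$, which includes normal second derivatives of $u$ that you cannot control. Your suggested remedies do not repair this. Appealing to the negativity of the normal bundle, or to $R(h)\le 0$ in the fibre direction, does not produce a sign for $\partial_n\partial_{\bar n}\,\log\tr{\omega_X}{\omega}$ along $E$; that Hessian is governed by how $g(t)$ varies in the normal direction, which is precisely what we are trying to bound. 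And adding a term such as $A\log|s|^2_h$, as in Lemma \ref{lemmaest2}, pushes the maximum \emph{away} from $E$ (since $\log|s|^2_h\to-\infty$ there), so it yields information on the complement of $E$, not on $E$ itself --- the opposite of what you need. Separately, the curvature term $\tr{\omega}{\omega_X}$ appearing in (\ref{tr1}) with $\tilde\omega=\omega_X$ is not under control near $E$: the only uniform lower bound available is $\omega(t)\ge c\,\pi^*\omega_Y$, and $\pi^*\omega_Y$ degenerates on $E$, so $\tr{\omega}{\omega_X}$ may blow up as $t\to T$. Your cohomological remark that $[\omega(t)|_E]\to 0$ is correct, but it only gives integral bounds such as (\ref{intN}) (used in the \emph{next} lemma), not the pointwise bound needed here. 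The paper's wedge-product device is the mechanism that converts the already-established pointwise upper bound $\omega\le(C/r^2)\,\omega_{\mathrm{Eucl}}$ into a tangential trace bound on $E$; that is the missing idea.
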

\begin{proof}
We first claim that on $\tilde{B}$.
\begin{equation} \label{claim00}
\omega \wedge \oX^{n-2} \wedge \omega_{\infty} \le C \oX^{n-1} \wedge \omega_{\infty},
\end{equation}
for some uniform constant $C$, where $\omega_{\infty}$ is given by (\ref{omegainfty}).  To see this, we may compute at a point $z \in \tilde{B} \setminus E$.

From part (i) of Lemma \ref{lemmaest2} and Lemma \ref{lemmahato}, we have
\begin{equation}
\omega \wedge \omega_X^{n-2} \wedge \omega_{\infty}  \le \frac{C}{r^{2(n-1)}} \oeuc^{n-1} \wedge \omega_{\infty}.
\end{equation}
On the other hand,
 since $\oX$, $\oeuc$  and $\omega_{\infty}$ are $\textrm{U}(n)$-invariant we may assume without loss of generality that $z=(z^1, 0, \cdots,0)$.  Then compute at $z$, 
\begin{equation}
\oX = \frac{\sqrt{-1}}{2\pi} \left( k^2 r^{2(k-1)} dz^1 \wedge d\ov{z^1} +    \sum_{j=2}^n (kr^{2(k-1)} + \frac{k}{r^2}) dz^j \wedge d\ov{z^j} \right) \ge \frac{\sqrt{-1}}{2\pi} \frac{k}{r^2} \sum_{j=2}^n dz^j \wedge d\ov{z^j}.
\end{equation}
Hence for some uniform $C>0$,
\begin{equation}
\omega_X^{n-1} \wedge \omega_{\infty} \ge \frac{1}{C r^{2(n-1)}} \oeuc^{n-1} \wedge \omega_{\infty},
\end{equation}
establishing the claim (\ref{claim00}).

To finish the proof of the lemma, fix a point $p \in E$.  We may pick a unitary basis $e_1, \ldots, e_n$ for $T_pX$ with respect to $\oX$ such that the vectors $e_1, \ldots, e_{n-1}$ lie in $T_pE$.  Then $\omega_{\infty}(e_i, \ov{e_j})=0$ 
unless $i=j=n$.  Then at $p$, from (\ref{claim00}),
\begin{equation}
\omega \wedge \oX^{n-2} \wedge \omega_{\infty} (e_1, \ov{e_1}, \ldots, e_n, \overline{e_n}) \le C \oX^{n-1} \wedge \omega_{\infty} (e_1, \ov{e_1}, \ldots, e_n, \overline{e_n}), 
\end{equation}
which implies that
\begin{equation}
\omega \wedge \oX^{n-2}  (e_1, \ov{e_1}, \ldots, e_{n-1}, \overline{e_{n-1}}) \le C \oX^{n-1}  (e_1, \ov{e_1}, \ldots, e_{n-1}, \overline{e_{n-1}}), 
\end{equation}
and hence $\omega \wedge \omega_X^{n-2}|_E \le C \omega_X^{n-1}|_E,$ which gives a uniform upper bound for 
$\tr{\omega_X|_E}{\omega|_E}$ at $p$.  The inequality  (\ref{ote}) follows. \qed
\end{proof}

Next:

\begin{lemma} \label{lemmaintN}
For any projective line $N$ in $E \cong \mathbb{P}^{n-1}$, we have for $t \in [0,T)$,
\begin{equation} \label{intN}
\int_N \omega(t) \le C (T-t),
\end{equation}
for a uniform constant $C$.  Moreover, if $d_{g} =d_{g(t)}$ is the distance function on $X$ associated to the metric $\omega$ then for any points $p,q \in E$ and $t \in [0,T)$,
\begin{equation} \label{dg13}
d_{g}(p,q) \le C(T-t)^{1/3}
\end{equation}
\end{lemma}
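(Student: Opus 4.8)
The plan is to prove the cohomological bound \eqref{intN} first and then deduce the distance bound \eqref{dg13} by a covering argument along a projective line joining $p$ and $q$, combining the intrinsic diameter control of small spheres (Lemma \ref{klemma}(i)) with the radial length estimate (Lemma \ref{klemma}(ii)).

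For \eqref{intN}: the integral $\int_N \omega(t)$ depends only on the cohomology class $[\omega(t)]$ restricted to $N$. By Lemma \ref{basicestimates}, $[\omega(t)] = [\hat\omega_t] = \frac{1}{T}((T-t)[\omega_0] + t[\pi^*\omega_Y])$. Since $N \subset E$ and $\pi$ contracts $E$ to the point $y_0$, the class $[\pi^*\omega_Y]$ restricts to zero on $N$ — more precisely $\pi^*\omega_Y$ is represented by a form that is the pullback under the contraction, so its periods over curves in $E$ vanish. Hence $\int_N \omega(t) = \frac{T-t}{T}\int_N \omega_0$, which gives \eqref{intN} with $C = \frac{1}{T}\sup_N \int_N \omega_0$; the supremum over projective lines $N$ is finite since they form a compact family and $\omega_0$ is smooth. (One can instead argue via Lemma \ref{oXE}: $\int_N \omega(t) = \int_N \omega(t)|_E \le C\int_N \omega_X|_E$, but the cohomological computation is cleaner and one still needs \eqref{intN} to be proportional to $T-t$, which only the class argument delivers; Lemma \ref{oXE} is the substitute for the more involved Lemma 3.2 of \cite{SW2} and can be used to make the area comparison on $E$ explicit.)

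For \eqref{dg13}: fix $p, q \in E$ and a projective line $N \cong \mathbb{P}^1$ in $E$ through both (any two points of $\mathbb{P}^{n-1}$ lie on such a line). I would estimate $d_g(p,q)$ by constructing an explicit comparison path. The idea is to push the line $N$ slightly off $E$: choose a radius $\rho = \rho(t) \in (0, R_0)$ to be optimized, and build a path from $p$ to $q$ consisting of (a) a radial segment from $p$ out to the sphere $S_\rho$ — of length $\le C\rho^{\delta}$ by Lemma \ref{klemma}(ii) with $\delta = k/(k+1)$; (b) a path within a perturbed copy of $N$ sitting at radius comparable to $\rho$; (c) a radial segment back down to $q$, again of length $\le C\rho^{\delta}$. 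For the middle piece one bounds the length of a curve in the perturbed line $N_\rho$ by $\big(\mathrm{length}\big)^2 \le \big(\int_{N_\rho}\omega(t)\big)\cdot\big(\text{conformal factor / Cauchy--Schwarz term}\big)$; using \eqref{intN} the area of (a disk in) $N$ is $\le C(T-t)$, and one controls the remaining factor using part (i) of Lemma \ref{lemmaest2}, i.e. $\omega \le \frac{C}{r^2}\oeuc$, so on the annulus near radius $\rho$ the metric is at worst $\rho^{-2}\oeuc$. This yields a middle-segment length of order $\rho^{-1}(T-t)^{1/2}$. Optimizing $C\rho^{\delta} + C\rho^{-1}(T-t)^{1/2}$ over $\rho$ gives $d_g(p,q) \le C(T-t)^{\delta/(2(\delta+1))}$. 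Since $\delta = k/(k+1) \ge 1/2$, the exponent $\delta/(2(\delta+1)) \ge 1/(2(2\cdot(3/2)))\cdot$... — more directly, for $k = 1$ one gets exactly $1/3$, and for larger $k$ a better exponent; rounding down to the uniform value $1/3$ valid in all cases gives \eqref{dg13}.

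The main obstacle I anticipate is item (b): making precise the claim that the line $N$ can be moved off $E$ into the region where Lemma \ref{lemmaest2}(i) applies while keeping control of $\int_{N_\rho}\omega(t)$ and of how the radial coordinate $r$ varies along $N_\rho$. Concretely, in the chart $\tilde B$, a projective line in $E$ lifts to a $\mathbb{P}^1$ on which the fiber coordinate $y_{(i)}$ vanishes; perturbing it means taking $y_{(i)} = \varepsilon$, and one must check that $r = |s|_h^{1/k} \sim |y_{(i)}|^{1/k}(\text{bounded})$ stays comparable to $\rho := \varepsilon^{1/k}$ along the whole perturbed curve, and that the area $\int_{N_\varepsilon}\omega(t)$ is still $O(T-t)$ — which again follows from the cohomology class of $\omega(t)$ together with uniform smooth bounds away from $E$ (Lemma \ref{basicestimates}(iii)), since $N_\varepsilon$ stays in a fixed compact subset of $X \setminus E$ once $\varepsilon$ is fixed, but the constant must be taken uniform as $\varepsilon \to 0$, which is exactly where \eqref{intN} (a statement on $E$ itself, obtained by letting $\varepsilon \to 0$ or directly) is needed. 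Handling this uniformity carefully — rather than the optimization, which is routine — is the crux.
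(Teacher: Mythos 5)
Your proof of the cohomological bound \eqref{intN} is correct and is the same argument as the paper's: $\int_N\omega(t)=\frac{T-t}{T}\int_N\omega_0$ since $\pi^*\omega_Y$ restricted to $N\subset E$ is the pullback of a form via the constant map $N\to\{y_0\}$, hence exact (in fact zero), and $\int_N\omega_0$ is a topological constant independent of $N$.

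The distance bound is where there is a genuine gap, and it is primarily arithmetic but symptomatic of taking the wrong route. You optimize $C\rho^\delta + C\rho^{-1}(T-t)^{1/2}$, which at the critical $\rho$ gives $(T-t)^{\delta/(2(\delta+1))}$. For $k=1$ this is $(T-t)^{1/6}$, not $(T-t)^{1/3}$, so your claim that the optimization ``gives exactly $1/3$'' for $k=1$ is false. (Even if the middle-segment estimate is repaired to $\sim (T-t)^{1/3}\rho^{-1/3}$, which is what a strip/Cauchy--Schwarz argument inside $N_\rho$ actually produces, the optimized exponent is $\delta/(3\delta+1)=1/5$ for $k=1$, still short of $1/3$.) The underlying problem is the radial detour: by pushing $N$ off $E$ you pay $O(\rho^\delta)$ to move out and back, and the price never amortizes to the $(T-t)^{1/3}$ the lemma asserts.

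The route that works, and the one the paper uses, is to stay on $E$. Lemma \ref{oXE} already gives a fixed-metric upper bound $\omega(t)|_E \le C\,\omega_X|_E$ for all $t<T$, and you have the area bound $\int_N\omega(t)\le C(T-t)$. On the fixed surface $N\cong\mathbb{P}^1$, a pointwise bound $g\le Cg_0$ plus an area bound $\le A$ gives $\mathrm{diam}_g(N)\le C A^{1/3}$ by the standard strip/Cauchy--Schwarz argument (this is exactly Lemma 2.4 of [SSW], which the paper cites): connect $p,q$ by a Euclidean strip of width $\varepsilon$, pick a good line in the strip with $\int\mathrm{tr}_{g_0}g \le A/\varepsilon$, so its $g$-length is $\lesssim\sqrt{A/\varepsilon}$; the endpoints are within $g$-distance $\lesssim\varepsilon$ of $p,q$ thanks to $g\le Cg_0$; optimizing $\varepsilon + \sqrt{A/\varepsilon}$ gives $A^{1/3}$. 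Since a path inside $N$ is a path in $X$, this directly bounds $d_{g(t)}(p,q)$ with no radial segments. You should replace your construction by this one; the optimization scheme you propose cannot reach exponent $1/3$.
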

\begin{proof}
The inequality (\ref{intN}) follows from the fact that $\omega(t)$ is cohomologous to $\hat{\omega}_t$ given by (\ref{referencemetric}).  Next, (\ref{dg13}) follows from Lemma \ref{oXE} and the argument of Lemma 2.4 in \cite{SSW} (see also Lemma 3.2 in \cite{SW2}). \qed
\end{proof}



Denote by $S_{r}$ the $(2n-1)$-sphere of radius $r>0$, which we assume lies in $B$.  Denote by $A_{r, r'}$ the annulus $A_{r, r'} = \{ z \in B \ | \ r \le |z| \le r' \}$.

Given a metric $g$, we denote by $L_{g}(\gamma)$ the length of a piecewise smooth path $\gamma$ with respect to  $g$. The main result in this section is to bound the lengths of paths in $A_{r, 4r}$ with respect to $g(t)$ and $g_T$.

\begin{theorem} \label{diamA}
There exist uniform constants $c, C$ such that for $r$ with $0<r<c$  the following holds.  
\begin{enumerate}
\item[(i)] For all $t$ with $0 <T-t \le r^{\delta}$ and for all points $z', z''$ in $A_{r, 4r}$ we can find a piecewise smooth path $\gamma=\gamma(t, z', z'')$ in $A_{r,4r}$ from $z'$ to $z''$ such that
\begin{equation} \label{eqn:diamA1}
L_{g(t)} (\gamma) \le C r^{\delta/3}.
\end{equation}
\item[(ii)] For all points $z', z''$ in $A_{r, 4r}$ we can find a piecewise smooth path $\gamma=\gamma(z',z'')$ in $A_{r,4r}$ from $z'$ to $z''$ such that
\begin{equation} \label{eqn:diamA2}
L_{g_T}(\gamma) \le C r^{\delta/3}.
\end{equation}
\end{enumerate}
Here $\delta=k/(k+1)$.
\end{theorem}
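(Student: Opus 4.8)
The plan is to decompose the annulus $A_{r,4r}$ into a "radial" part and an "angular" part and to bound each using the estimates of Section~\ref{secthirz}. The starting observation is that for a point $z \in A_{r,4r}$, moving radially costs little: by Lemma~\ref{klemma}(ii), the radial segment from $z$ to $\tfrac{z}{|z|}\cdot s$ (for $r \le s \le 4r$) has $g(t)$-length at most $C|z|^\delta \le Cr^\delta$, which is already better than $Cr^{\delta/3}$. So the entire difficulty is \emph{angular} transport: given two points $z',z''$ on (or near) the same sphere $S_\rho$ with $r \le \rho \le 4r$, connect them by a path \emph{within} the annulus whose $g(t)$-length is $O(r^{\delta/3})$. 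A direct use of Lemma~\ref{klemma}(i) only gives that the diameter of each sphere $S_\rho$ is bounded by a uniform constant, which is far too weak; we need the $r^{\delta/3}$ decay, and this is where the time restriction $0<T-t\le r^\delta$ and Lemma~\ref{lemmaintN} must enter.

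The key device I would use is the following dichotomy applied to the "radius" to which we retract. For a given scale $r$, either we can shrink radially all the way down to a sphere $S_{\rho_0}$ with $\rho_0 \approx r^{?}$ at small cost and then exploit the fact that on a tiny sphere the metric is close to being collapsed onto $E$, or we stay at scale $r$ and use that the sphere has controlled extrinsic geometry. More precisely, I would proceed as follows. \textbf{Step 1.} Push both $z'$ and $z''$ radially inward to the sphere $S_{\rho}$ of the smaller of the two radii; cost $\le Cr^\delta$ by Lemma~\ref{klemma}(ii). \textbf{Step 2.} Lift the picture to $\tilde B$ via $\pi$: the sphere $S_\rho$ pulls back to a hypersurface $\tilde S_\rho$ fibering over $E\cong\mathbb{P}^{n-1}$. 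Transporting angularly on $S_\rho$ amounts to moving within this $\mathbb{P}^{n-1}$-direction. \textbf{Step 3.} Estimate the $g(t)$-distance between two points of $\tilde S_\rho$ lying over the same or nearby points of $E$: the "vertical" (circle) directions are controlled by the radial field estimate Lemma~\ref{lemmaest2}(iii), giving length $O(\rho^\delta)$ for a full loop, while the "horizontal" directions along $E$ are controlled by $\omega(t)|_E \le C\,\omega_X|_E$ (Lemma~\ref{oXE}) together with $\int_N \omega(t)\le C(T-t)$ (Lemma~\ref{lemmaintN}). This is exactly the point where $T-t \le r^\delta$ is used: it forces $\int_N\omega(t)$ to be small at scale $r$, so that a projective line $N\subset E$ has $g(t)$-length $O((T-t)^{1/2})=O(r^{\delta/2})$, and by the standard interpolation argument (as in Lemma 2.4 of \cite{SSW}, cited in the proof of Lemma~\ref{lemmaintN}) the $g(t)$-\emph{distance} between any two points of $E$ is $O((T-t)^{1/3}) = O(r^{\delta/3})$. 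Combining a path that first descends radially, then moves "horizontally near $E$" at small scale using the $r^{\delta/3}$ bound, then ascends radially, and keeping everything inside $A_{r,4r}$ by a suitable choice of the path (staying within the annulus is ensured because horizontal motion near the inner sphere does not change $|z|$, and radial motion stays between $r$ and $4r$), yields \eqref{eqn:diamA1}.

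\textbf{Step 4 (part (ii)).} For $g_T$ the same argument goes through by taking $t\to T^-$ in the estimates: Lemma~\ref{lemmaest2} and Lemma~\ref{klemma} hold for $g_T$ as well (the bounds are uniform in $t$ and pass to the limit on $B_{R_0}\setminus\{0\}$ by Lemma~\ref{basicestimates}(v)), and the angular bound now comes from the fact that $\omega_T$ restricted to $E$ vanishes cohomologically — $[\omega_T]$ pulls back from $Y$ where $E$ is collapsed to a point — so $\int_N \omega_T = 0$ for every line $N\subset E$, hence $\omega_T|_E \equiv 0$ by Lemma~\ref{oXE}, and $E$ has $g_T$-diameter zero. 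Thus at scale $r$ the angular transport with respect to $g_T$ costs $O(r^\delta)$ (just from the vertical estimate via Lemma~\ref{lemmaest2}(iii) applied to $g_T$), which is even better than $r^{\delta/3}$.

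\textbf{Main obstacle.} The delicate point is Step~3: controlling angular transport \emph{without leaving the annulus} and converting the cohomological smallness $\int_N\omega(t) \le C(T-t)$ into a genuine pointwise distance bound of order $(T-t)^{1/3}$ on all of $E$, not just along a single line. This requires the interpolation/covering argument of \cite{SSW}: cover $E$ by projective lines through a fixed point, bound the length of each such line by $C(T-t)^{1/2}$ using Lemma~\ref{oXE} and Cauchy–Schwarz against $\omega(t)\wedge\omega_X^{n-2}|_E$, and then chain $O((T-t)^{-1/6})$ of them to connect arbitrary points while keeping the total length $O((T-t)^{1/3})$. Making this chaining compatible with remaining inside $A_{r,4r}$ — i.e. performing it on the pulled-back sphere $\tilde S_\rho$ at a well-chosen inner radius $\rho$ comparable to $r$ rather than literally on $E$ — is the technical heart of the proof and the step I expect to require the most care.
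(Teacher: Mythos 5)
Your outline correctly identifies the three ingredients that drive the proof (the radial estimate of Lemma~\ref{klemma}(ii), the holomorphic vector field bound in Lemma~\ref{lemmaest2}(iii), and the cohomological smallness $\int_N\omega(t)\le C(T-t)$ of Lemma~\ref{lemmaintN}), and you correctly diagnose the difficulty: the cohomological bound lives on $E$, while the path has to stay inside $A_{r,4r}$. But you do not actually close this gap, and the mechanism you propose for closing it would not work as stated. The pullback $\tilde S_\rho$ of a sphere $S_\rho$ is a real hypersurface, and the ``horizontal lifts'' of projective lines $N\subset E$ to $\tilde S_\rho$ are not complex curves, so neither the cohomological identity $\int_N\omega(t)=[\omega(t)]\cdot[N]$ nor the Wirtinger-type comparison of $\omega$ to area survives the lift. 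Chaining $O((T-t)^{-1/6})$ lines in $E$ is exactly the device of \cite{SSW} for establishing (\ref{dg13}), but that gives a distance bound \emph{on $E$ itself}; pushing it out to radius $\rho\sim r$ is the missing step and is not a routine continuity argument, precisely because the metric $\omega(t)$ blows up like $r^{-2}\omega_{\textrm{Eucl}}$ as $r\to 0$. Your treatment of part (ii) also has a defect: $\omega_T$ is a current which is not smooth along $E$, so ``$\omega_T|_E\equiv 0$'' is not meaningful, and the conclusion ``$E$ has $g_T$-diameter zero'' does not follow.

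The paper's route is genuinely different and avoids these problems. Instead of transporting \emph{along} $E$ and lifting, the paper produces a single holomorphic disk $\tilde L=L\cap\Cone$ inside $A_{r,4r}$ containing $z'$ and $z''$ (after unitary normalizations and Lemma~\ref{lemma:cxline}), and then transfers the cohomological smallness from $E$ to $\tilde L$ by \emph{Stokes' theorem}: the $\mathbb{Z}_k$-orbit $\bigcup_j\tilde L_j$, together with a region $W\subset E$ of a line $N$ and a ``bucket'' $F$ connecting $\partial\tilde L$ to the origin, bounds a $3$-chain. The integral over $W$ is $O(T-t)\le O(r^\delta)$ by Lemma~\ref{lemmaintN}, and the integral over $F$ is $O(r^\delta)$ by an elementary volume computation against the conical metric $\tilde g=\rho^{-2(1-\delta)}d\rho^2+g_{S^{2n-1}}$ together with the pointwise bound $\omega\wedge\omega\le dV_g\wedge dV_g$ for real $2$-submanifolds. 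This yields $\int_{\tilde L}\omega(t)\le Cr^\delta$, an \emph{area} bound on a holomorphic disk at scale $r$, and then a one-variable mean-value argument plus Cauchy--Schwarz locates a path in an inscribed square of length $O(r^{\delta/3})$. For $g_T$ one simply lets $t\to T^-$ in the area bound (smooth convergence away from $E$) and repeats the same mean-value argument. So what you describe as the ``technical heart'' is handled by an argument of a different character (Stokes on a chain reaching $E$, plus a Cauchy--Schwarz extraction), and no chaining of lines in $E$ or lift to $\tilde S_\rho$ is required.
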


As a consequence:

\begin{corollary} \label{cor:diamgT}
For any $\ve>0$ there exists $c_0 >0$ such that
\begin{equation}
\emph{diam}_{g_T} (B_{c} \setminus  \{0\}) < \ve, \quad \textrm{for all } 0< c \le c_0,
\end{equation}
where $B_{c}$ denotes the ball of radius $c$ (with respect to the Euclidean metric) centered at the origin.
\end{corollary}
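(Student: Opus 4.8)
The plan is to deduce the corollary from part (ii) of Theorem \ref{diamA} by a dyadic decomposition of the punctured ball together with the radial length estimate of Lemma \ref{klemma}(ii). Fix $\ve > 0$. The idea is that any two points of $B_c \setminus \{0\}$ can be joined by a path whose $g_T$-length is controlled by a geometric series in the scales $r = 2^{-j}c$, which sums to something small when $c$ is small because the exponent $\delta/3 > 0$.

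First I would reduce to joining an arbitrary point $z \in B_c \setminus \{0\}$ to a fixed reference point, say a point $z_* $ on the sphere $S_{c/2}$; then the diameter bound follows by the triangle inequality at the cost of a factor $2$. Given $z$ with $|z| = \rho \le c$, choose the integer $j \ge 0$ with $2^{-(j+1)} c < \rho \le 2^{-j} c$, so that $z$ lies in the annulus $A_{r_j, 2 r_j}$ with $r_j = 2^{-(j+1)}c$ (adjusting the constant $4$ versus $2$ in the annulus is harmless). Apply Theorem \ref{diamA}(ii) in each annulus $A_{r_i, 4 r_i}$ for $i = 0, 1, \ldots, j$: this gives, for each scale, a path inside $A_{r_i, 4 r_i}$ of $g_T$-length at most $C r_i^{\delta/3}$ connecting a chosen point on $S_{2 r_i}$ to a chosen point on $S_{r_i} = S_{2 r_{i+1}}$. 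Concatenating these paths across consecutive scales connects $z$ (up to one more application of Theorem \ref{diamA}(ii) within its own annulus, or a short radial segment handled by Lemma \ref{klemma}(ii)) to a point near $S_{c/2}$, and then a final path in $A_{c/2, 4 \cdot c/2}$ reaches $z_*$. The total length is bounded by
\begin{equation}
\sum_{i=0}^{\infty} C r_i^{\delta/3} = C \sum_{i=0}^{\infty} (2^{-(i+1)} c)^{\delta/3} = C' c^{\delta/3},
\end{equation}
where $C' = C \sum_{i \ge 0} 2^{-(i+1)\delta/3} < \infty$ since $\delta/3 > 0$. Hence $\mathrm{diam}_{g_T}(B_c \setminus \{0\}) \le 4 C' c^{\delta/3}$, and choosing $c_0$ so that $4 C' c_0^{\delta/3} < \ve$ finishes the argument, since the bound is monotone in $c$.

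The only slightly delicate point is the bookkeeping at the interface between consecutive annuli: one must make sure the "chosen point" on $S_{r_i}$ where one path ends is the same (or a uniformly-close-by) point where the next path begins, so that no extra gaps appear. This is not a real obstacle — Theorem \ref{diamA}(ii) lets us pick the endpoints of each path freely within the annulus, so we simply run all the annulus-paths to and from a single marked point on each dyadic sphere (e.g.\ the point $(2^{-j}c, 0, \ldots, 0)$), and the sphere $S_{r_i}$ is shared as the inner boundary of $A_{r_i, 4r_i}$ and (after reindexing) the outer region of the next annulus down. The radial estimate of Lemma \ref{klemma}(ii) is available as a fallback to bridge any short radial gap with length $\le C|z|^\delta \le C c^\delta$, which is even smaller than the main term, so it does not affect the conclusion. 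Everything is uniform in $t$ in the $g(t)$-version, but for the corollary we only need the $g_T$ statement.
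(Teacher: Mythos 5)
Your argument is correct, but it is more elaborate than the paper's. You build a dyadic chain of annuli $A_{r_i,4r_i}$ down toward the origin and sum a geometric series $\sum_i C r_i^{\delta/3}$, using Lemma \ref{klemma}(ii) only as a ``fallback'' to close small radial gaps. The paper instead uses the radial estimate as the \emph{primary} tool: given $p,q\in B_c\setminus\{0\}$, fix the single annulus $A_{r,4r}\subset B_c$ containing $p$, travel from $q$ to a point $q'\in A_{r,4r}$ along a purely radial path (which by Lemma \ref{klemma}(ii) has $g_T$-length at most $Cc^\delta$, regardless of whether $q$ is inside or outside that annulus), and then apply Theorem \ref{diamA}(ii) once to join $q'$ to $p$ within $A_{r,4r}$ with length $\le Cr^{\delta/3}\le Cc^{\delta/3}$. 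In effect, the convergent geometric series you sum by hand is already packaged into the single bound $C|z|^\delta$ of Lemma \ref{klemma}(ii) (it comes from integrating the radial vector-field estimate $|W|_g\le Cr^{-(1-\delta)}$), so there is no need to iterate Theorem \ref{diamA} over scales. Your approach is still valid --- the bookkeeping at interfaces is fine because consecutive annuli overlap and endpoints can be chosen freely --- but it re-proves a summability that the radial lemma already delivers, and it only yields what the paper's two-step path gives directly.
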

\begin{proof}[Proof of Corollary \ref{cor:diamgT}]  Let $C\ge 1$ be the maximum of the two constants  of Lemma \ref{klemma}, part (ii), and Theorem \ref{diamA}.  
Choose $c_0>0$ sufficiently small so that $c_0^{\delta/3} <  \frac{\ve}{2C}$.  Take any two points $p,q \in B_{c} \setminus \{ 0 \}$.  Choose $r$ such that $p \in A_{r, 4r} \subset B_c$.  By the radial estimate (part (ii) of Lemma \ref{klemma})  there exists a radial path $\gamma_1$ in $B_c$ from $q'$ to $q$ with $q' \in A_{r,4r}$ and $L_{g_T}(\gamma_1) \le C c^{\delta} \le c^{\delta/3}$.  By Theorem \ref{diamA}, there exists a path $\gamma_2$ in $A_{r,4r}$ from $p$ to $q'$ such that $L(\gamma_2) \le C r^{\delta/3} \le C c^{\delta/3}$.  Let $\gamma$ be the piecewise smooth path in $B_c$ from $p$ to $q$ formed by joining $\gamma_2$ and $\gamma_1$.  Then $L_{g_T}(\gamma) \le 2C c^{\delta/3}  < \ve$.  This proves the required estimate. \qed
\end{proof}

We now begin the proof of Theorem \ref{diamA}.
Since we have the radial estimate for $g(t)$ and $g_T$ (Lemma \ref{klemma}), we may assume without loss of generality that $z', z'' \in S_{2r}$. In fact we can make further assumptions on $z', z''$.  The following lemma will help us.

\begin{lemma} \label{lemma:cxline}
For any $z$ in $S_r$ and any $\theta \in [0,2\pi]$ there exists a smooth path $\gamma_0$ in $S_r$ from $z$ to $ze^{i\theta}$ such that
\begin{equation}
L(\gamma_0) \le C r^{\delta}.
\end{equation}
\end{lemma}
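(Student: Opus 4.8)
The plan is to exhibit $\gamma_0$ explicitly as the orbit of $z$ under the $\mathrm{U}(1)$-action $z \mapsto ze^{i\lambda}$, $\lambda \in [0,\theta]$, which visibly stays in $S_r$, and to estimate its $g(t)$-length using the estimates of Lemma \ref{lemmaest2}. Concretely, set $\gamma_0(\lambda) = z e^{i\lambda}$ for $\lambda\in[0,\theta]$. The velocity vector is $\dot\gamma_0(\lambda) = i\, z e^{i\lambda} = i V|_{\gamma_0(\lambda)}$, where $V = z^j \partial_{z^j}$ is the holomorphic vector field appearing in the proof of part (iii) of Lemma \ref{lemmaest2}. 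Since the real vector field underlying $V$ (and its multiple by $i$) has $g$-length controlled by $|V|_\omega$, we get $|\dot\gamma_0(\lambda)|_{g(t)} \le C |V|_{\omega(t)}$ at every point of $\gamma_0$, which lies on $S_r$.

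First I would recall from the proof of Lemma \ref{lemmaest2}(iii) the pointwise bound $|V|^2_{\omega(t)} \le C r^{2k/(k+1)} = C r^{2\delta}$, valid uniformly in $t\in[0,T)$ on $B_{R_0}\setminus\{0\}$ (shrinking $R_0$ and hence the ball $B$ if necessary so that this applies on $S_r$ for all $r$ small). Then $|\dot\gamma_0(\lambda)|_{g(t)} \le C r^{\delta}$ for all $\lambda$, so
\begin{equation}
L_{g(t)}(\gamma_0) = \int_0^\theta |\dot\gamma_0(\lambda)|_{g(t)}\, d\lambda \le 2\pi C r^{\delta},
\end{equation}
which is the desired estimate after renaming the constant. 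One small point to address: the statement of the lemma does not display a subscript on $L$, so I would read it as $L = L_{g(t)}$ uniformly in $t$ (consistent with how $L$ is used in Theorem \ref{diamA} and Lemma \ref{klemma}); the same argument gives the bound for $g_T$ as well, since $|V|^2_{g_T} \le C r^{2\delta}$ follows by letting $t\to T^-$ in the estimate of Lemma \ref{lemmaest2}(iii) (this limiting bound is already implicitly used, e.g., in Lemma \ref{klemma}(ii)).

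I do not anticipate a serious obstacle here: the content is entirely contained in the radial/holomorphic-vector-field estimate of Lemma \ref{lemmaest2}(iii), and the only thing to verify is the elementary fact that the generator of the diagonal $\mathrm{U}(1)$-rotation is $iV$, together with the bound $\||iV|_g\| = \||V|_g\|$. The mildest care needed is bookkeeping: making sure $S_r \subset B_{R_0}$ so that Lemma \ref{lemmaest2} applies, and noting that the path stays on the sphere (automatic, since $|ze^{i\lambda}| = |z|$). This lemma is the base case that lets us reduce, in the proof of Theorem \ref{diamA}, the pair $z',z''\in S_{2r}$ to pairs differing only by such a phase rotation, so its role is to handle the "angular" directions cheaply while the remaining work handles directions transverse to the $\mathrm{U}(1)$-orbits.
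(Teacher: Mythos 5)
Your proof is correct and follows essentially the same approach as the paper: both parametrize the $\mathrm{U}(1)$-orbit of $z$ (the paper uses $\gamma_0(s)=ze^{is\theta}$ on $[0,1]$, you use $\gamma_0(\lambda)=ze^{i\lambda}$ on $[0,\theta]$), identify the velocity as a scalar multiple of $V=z^i\partial_{z^i}$, and invoke the bound $|V|^2_{g(t)}\le Cr^{2\delta}$ from the proof of Lemma \ref{lemmaest2}(iii). The remarks about $g_T$ and about making sure $S_r\subset B_{R_0}$ are reasonable and consistent with how the lemma is used downstream.
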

\begin{proof}
Define a  smooth path $\gamma_0: [0,1] \rightarrow S_r$ by $\gamma_0(s) = ze^{is\theta}$.  This is a path from $z$ to $ze^{i\theta}$.  We have $\dds \gamma_0 = i\theta \gamma_0$.  That is, if we write $\gamma_0(s) = (z^1(s), \ldots, z^n(s))$ then 
\begin{equation}
\dds \gamma_0 (s) = i\theta \left( z^1(s) \frac{\partial}{\partial z^1} + \cdots + z^n(s) \frac{\partial}{\partial z^n}  \right)\bigg|_{\gamma_0(s)}= i \theta V_{\gamma_0(s)},
\end{equation}
for $V= z^i \frac{\partial}{\partial z^i}$ as in the proof of Lemma \ref{lemmaest2}, which satisfies $|V|^2_{g(t)} \le C r^{2\delta}$.   Then 
\begin{equation}
L(\gamma_0) = \int_0^1 \sqrt{ \theta^2 | V|^2_{g(t)} (\gamma_0(s))} ds \le C r^{\delta},
\end{equation}
as required. \qed
\end{proof}

The proof of Theorem \ref{diamA} will only require Lemma \ref{lemmaintN} and the local estimates (i) and (iii) of Lemma \ref{lemmaest2} together with their consequence Lemma \ref{klemma}.  
Since $\geu$ and $V$ are invariant under unitary transformations of $\mathbb{C}^n$,  we may, without loss of generality, apply unitary transformations to the points $z'$ and $z''$.  In particular, 
  we may assume that $z'=(2r,0, \ldots, 0)$.  Applying a unitary transformation that fixes the first coordinate, we may assume that $z''$ is of the form $(z^1, z^2, 0, \ldots, 0)$ for some $z^1, z^2 \in \mathbb{C}$ with $|z^1|^2+ |z^2|^2=1$.

Any such point $z'' \in S_{2r}$ must be  of the form 
\begin{equation}
z''= 2r (e^{i \xi_1} \cos \eta, e^{i \xi_2} \sin \eta, 0, \ldots, 0), \quad \textrm{for }  \xi_1, \xi_2 \in [0, 2\pi], \ \eta \in [0,\pi/2].
\end{equation}
  Moreover, we may and do suppose without loss of generality that $ \eta \in [0, \emax]$ for a fixed number $\emax$ with $0<\emax<  \pi/6,$ say (indeed we are free to assume that the projections of $z'$ and $z''$ onto the unit sphere are at most a uniformly small distance from each other with respect to the standard metric).
Applying Lemma \ref{lemma:cxline} we in addition assume that $\xi_1=0$, i.e., 
\begin{equation}
z'= (2r, 0, \ldots, 0), ~~~~z''=2r(\cos\eta, e^{i \xi_2}\sin\eta, 0 \ldots, 0).
\end{equation}
 Next, by  applying a unitary transformation we may assume that
\begin{equation}
z'=2r (\cos \theta, - \sin \theta, 0 \ldots, 0), \quad z'' = 2r (\cos \theta, \sin \theta, 0, \ldots, 0)
\end{equation}
for $\theta = \eta/2 \in [0, \emax/2]$.  Note that from now  we take $\theta$ to be this fixed number in $[0, \emax/2]$.

Next define a complex line $L$ in $\mathbb{C}^n$ by 
\begin{equation}
L = \{ (2r \cos \theta, z, 0 \ldots, 0) \in \mathbb{C}^n \ | \ z \in \mathbb{C} \}.
\end{equation}
Observe that $L$ contains the points $z', z''$.  We also define a cone $\Cone$ in $\mathbb{C}^n$ by
\begin{equation}
\Cone = \{ (\rho e^{i \xi_1} \cos \eta, \rho e^{i \xi_2} \sin \eta, 0 \ldots, 0) \in \mathbb{C}^n \ | \ \xi_1, \xi_2 \in [0,2\pi], \  \eta \in [0,\emax], \ \rho \ge 0 \}.
\end{equation}
Note that $\Cone$  contains the points $z', z''$.  Define $\tilde{L} = L \cap \Cone$.  Thus $\tilde{L}$ also contains the points $z', z''$.

\begin{lemma}
$\tilde{L}$ is a complex disk whose radius is  $O(r)$,  and $\tilde{L} \subset A_{r, 4r}$.
\end{lemma}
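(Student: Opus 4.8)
The plan is to compute $\tilde L = L \cap \Cone$ explicitly in coordinates. Parametrize $L$ holomorphically by $z \in \mathbb{C}$, setting $p(z) = (2r\cos\theta, z, 0, \ldots, 0)$, so that $L$ is an affine complex line and $z \mapsto p(z)$ is a biholomorphism onto it. First I would determine precisely which $z$ give $p(z) \in \Cone$. A point of $\Cone$ has first coordinate $\rho e^{i\xi_1}\cos\eta$ with $\eta \in [0, \emax]$ and $\rho \ge 0$; since $\emax < \pi/6$ we have $\cos\eta > 0$, and matching this with the real positive number $2r\cos\theta$ forces $\xi_1 = 0$ and $\rho\cos\eta = 2r\cos\theta$. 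Matching second coordinates gives $\rho\sin\eta = |z|$ (and $\xi_2 = \arg z$ when $z \ne 0$; the case $z = 0$ corresponds to $\eta = 0$, which is permitted). Dividing, the membership condition $\eta \le \emax$ becomes $\tan\eta = |z|/(2r\cos\theta) \le \tan\emax$, i.e.
\begin{equation}
\tilde L = L \cap \Cone = \bigl\{\, p(z) \ : \ |z| \le 2r\cos\theta\,\tan\emax \,\bigr\},
\end{equation}
which is a complex disk, of radius $2r\cos\theta\tan\emax$.

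Since $\theta$ ranges over $[0, \emax/2]$ with $\emax$ a fixed constant at most $\pi/6$, the factor $\cos\theta$ is bounded above by $1$ and below by $\cos(\emax/2) > 1/2$; hence the radius $2r\cos\theta\tan\emax$ is bounded above and below by uniform positive multiples of $r$, which gives the claim that it is $O(r)$ (and in fact comparable to $r$).

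For the inclusion $\tilde L \subset A_{r,4r}$, I would simply estimate $|p(z)|^2 = 4r^2\cos^2\theta + |z|^2$ on $\tilde L$. For the lower bound, $|p(z)|^2 \ge 4r^2\cos^2\theta \ge 4r^2\cos^2(\emax/2) > r^2$, using that $\emax$ is small (e.g.\ $4\cos^2(\pi/12) > 3$). For the upper bound, $|p(z)|^2 \le 4r^2 + 4r^2\cos^2\theta\tan^2\emax \le 4r^2(1 + \tan^2\emax) = 4r^2\sec^2\emax < 16r^2$, using $\cos\emax > 1/2$. Hence $r < |p(z)| < 4r$ at every point of $\tilde L$, as required. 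There is no real obstacle here: the only steps requiring attention are the reduction $\xi_1 = 0$ and the substitution $\tan\eta = |z|/(2r\cos\theta)$, which are what turn the cone's cumbersome $(\rho,\xi_1,\xi_2,\eta)$-description into a genuine round disk in the $z$-plane, together with the observation that $z \mapsto p(z)$ is a biholomorphic parametrization of $L$ so that $\tilde L$ really is a disk and not merely a set.
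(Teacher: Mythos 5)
Your proof is correct and follows essentially the same route as the paper's: both identify $\tilde L$ with a round disk in the $z$-plane by solving the cone membership condition, yielding radius $2r\cos\theta\tan(\emax)$, and both verify $\tilde L \subset A_{r,4r}$ by elementary trigonometric bounds using $\theta,\eta \in [0,\pi/6]$. The only cosmetic difference is that the paper obtains the annulus bounds from the polar relation $\rho = 2r\cos\theta/\cos\eta$ directly, whereas you bound $|p(z)|^2 = 4r^2\cos^2\theta + |z|^2$ using the disk radius; the computations are equivalent.
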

\begin{proof}
We first show that $\tilde{L} \subset A_{r, 4r}$.
Let 
\begin{equation}
z=(z^1, z^2, 0, \ldots, 0)= (\rho e^{i \xi_1} \cos \eta, \rho e^{i \xi_2} \sin \eta, 0 \ldots, 0) \in \tilde{L}.
\end{equation} Since $|z^1|^2+ |z^2|^2 = \rho^2$, it suffices to show that $r \le \rho \le 4r$.  We have $z^1 = \rho e^{i \xi_1} \cos \eta = 2r \cos \theta$, and taking absolute values gives
$\rho = 2r {\cos \theta}/{\cos \eta}.$
Since $\theta, \eta \in [0, \pi/6]$, we have
\begin{equation}
r \le \sqrt{3}\, r \le \rho \le \frac{4}{\sqrt{3}}r \le 4r
\end{equation}
as required.

Next, observe that $(z^1, z^2, 0 \ldots, 0) \in \tilde{L}$ if and only if 
\begin{equation}
(z^1, z^2, 0 \ldots, 0) = (2r \cos \theta, 2r \cos \theta \, e^{i \xi_2} \tan \eta, 0 \ldots, 0)
\end{equation}
 for some $\eta \in [0, \emax]$ and $\xi_2 \in [0, 2\pi]$.  Hence $\tilde{L}$ is a disk of radius 
 \begin{equation}
 r_0=2r \cos \theta \, \tan (\emax)
 \end{equation} which is of order $r$. \qed
\end{proof}

Observe also that if $j \in \mathbb{Z}_k$ then $j \cdot \tilde{L}$ for $j=0,1,2, \ldots, k-1$ are disjoint.  For later purposes, write $\tilde{L}_j = j \cdot \tilde{L}$.  By symmetry of $\omega(t)$, anything we prove about $\tilde{L}= \tilde{L}_0$ will hold also for $\tilde{L}_j$ for $j=1, 2, \ldots, k-1$.
The key lemma we will need to complete the proof of Theorem \ref{diamA} is as follows:

\begin{lemma} \label{lemma:intL}  We have for all $t$ with $0< (T-t) \le r^{\delta}$,
\begin{equation}
\int_{\tilde{L}} \omega(t)  \le C r^{\delta}.
\end{equation}
\end{lemma}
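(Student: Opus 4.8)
\textbf{Proposal for the proof of Lemma \ref{lemma:intL}.}

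The plan is to bound $\int_{\tilde L}\omega(t)$ by exploiting the fact that $\tilde L$ is a complex disk of radius $O(r)$ sitting inside the annulus $A_{r,4r}$, whose boundary circle $\partial \tilde L$ lies on the sphere $S_{2r\cos\theta/\cos\emax}$ (roughly $S_{2r}$ up to a uniform factor). Since $\omega(t)$ is a closed $(1,1)$-form and $\tilde L$ is a $2$-dimensional complex submanifold, I would like to say that $\int_{\tilde L}\omega(t)$ depends only on the cohomology class of $\omega(t)$ relative to its boundary; the natural tool is a local potential. Concretely, write $\omega(t)=\hat\omega_t+\ddbar\varphi(t)$ on $B\setminus\{0\}$ with $\varphi$ uniformly bounded (Lemma \ref{basicestimates}(i)), and observe $\hat\omega_t = \frac1T((T-t)\omega_0 + t\,\pi^*\omega_Y)$, which on $\tilde B$ is a smooth metric cohomologous on the disk $\tilde L$ to $(T-t)$ times something bounded plus a smooth fixed form. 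The first step is thus to separate the contribution of $\hat\omega_t$ from the $\ddbar\varphi$ term.

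For the $\ddbar\varphi$ term, Stokes' theorem gives $\int_{\tilde L}\ddbar\varphi = \int_{\partial\tilde L}\frac{\sqrt{-1}}{2\pi}(\partial-\dbar)\varphi$ (up to constants), but $\varphi$ is only bounded, not $C^1$-bounded near $E$, so a direct boundary-integral estimate is delicate. Instead I would compare $\omega(t)$ with the fixed K\"ahler metric $\omega_X$ on $\tilde B$: since $\omega_X$ is smooth on $\tilde B$ it is uniformly equivalent to $\omega_0|_{\tilde B}$, and $\int_{\tilde L}\omega_X \le C r_0^{?}$ — but this is too crude since $\omega_X$ blows up like $r^{-2}$ in the directions along $E$. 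The better route: use that $\tilde L$, being a disk transverse to $E$ meeting it at the single point where it hits the exceptional fiber (or, depending on $\theta$, not meeting $E$ at all if $\cos\theta\neq 0$ keeps it away from the origin — in fact $\tilde L$ stays in $A_{r,4r}$, so $\pi^{-1}(\tilde L)$ is disjoint from $E$!). Since $\tilde L \subset A_{r,4r}$ and $\pi$ is a biholomorphism away from $E$, $\pi^{-1}(\tilde L)$ avoids $E$ entirely, so we may work upstairs on $\tilde B\setminus E$ where everything is smooth, and the issue is purely quantitative decay as $r\to 0$.

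So the real content is: on $\tilde L$, which sits at Euclidean distance $\sim r$ from the origin, estimate $\int_{\tilde L}\omega(t)$ by $C r^\delta$. I would do this by slicing $\tilde L$ into concentric circles $C_\rho$ (images of $S_\rho$-arcs) and a radial direction, using the local estimates of Lemma \ref{lemmaest2}: the radial estimate (iii) controls $|W|_g^2 \le C r^{-2(1-\delta)}$, so a radial segment of Euclidean length $O(r)$ has $g(t)$-length $O(r^\delta)$; and the restriction of $\omega(t)$ to the disk $\tilde L$ involves the radial–angular block, for which I would integrate using part (i), $\omega\le \frac{C}{r^2}\oeuc$, against the area form of $\tilde L$ which is $O(r^2)$ — giving $O(1)$, not good enough. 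To get the gain to $r^\delta$, I expect one must instead argue as in Lemma 3.2/3.3 of \cite{SW2}: cover $\tilde L$ and use that $\omega(t)|_{\tilde L}$ is cohomologous (rel.\ boundary, via the bounded potential) to $\hat\omega_t|_{\tilde L}$ plus an exact form, with $\int_{\tilde L}\hat\omega_t \le C(T-t)\cdot O(1) + $ (a fixed smooth term on $\tilde L$ of size controlled by $r_0^2 \cdot \|\omega_Y\|$) $\le C(T-t) + Cr^2 \le Cr^\delta$ using $T-t\le r^\delta$. The main obstacle I anticipate is precisely justifying that the $\ddbar\varphi$ contribution over $\tilde L$ is $O(r^\delta)$ despite $\varphi$ being merely bounded: this should be handled by a further cutoff/integration-by-parts argument pushing the derivative onto a test form supported away from where control is lost, or by first proving the estimate with $\omega(t)$ replaced by $\omega(t')$ for a slightly earlier time and using the $C^\infty_{\mathrm{loc}}$ convergence plus the smooth estimates (iii) of Lemma \ref{basicestimates} on compact subsets — carefully tracking the $r$-dependence of those constants.
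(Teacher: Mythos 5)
The proposal does not reach a proof; it stalls precisely where the paper's argument does its real work.  You correctly diagnose that the two ``obvious'' estimates fail: the pointwise bound $\omega \le Cr^{-2}\oeuc$ from Lemma \ref{lemmaest2}(i) only gives $\int_{\tilde L}\omega \le C$, and the potential decomposition $\omega = \hat\omega_t + \ddbar\varphi$ leaves the term $\int_{\tilde L}\ddbar\varphi$ uncontrolled because $\varphi$ is only $L^\infty$, while the $C^\infty$ bounds of Lemma \ref{basicestimates}(iii) on $A_{r,4r}$ degenerate as $r\to 0$ in a way that is not tracked anywhere in the paper and would require a new gradient estimate to make precise.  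But you stop at identifying the obstruction; no mechanism is offered that overcomes it.  In particular, your observation that $\tilde L$ avoids $E$ and ``the issue is purely quantitative decay'' points in the wrong direction: working on $\tilde B\setminus E$ alone cannot produce the extra factor $r^\delta$, since the only $t$-dependent smallness available lives \emph{on} $E$ (the class $[\omega(t)]|_E$ shrinks like $T-t$), and your approach never accesses it.

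The paper's argument sidesteps the potential entirely and uses Stokes' theorem with a carefully chosen $3$-chain.  It completes $\bigcup_{j=0}^{k-1}\tilde L_j$ (the $\mathbb{Z}_k$-orbit of $\tilde L$) into a closed cycle in $\tilde B$ by attaching the cone $F$ (and its $\mathbb{Z}_k$-translates) from $\partial\tilde L$ down to the origin, together with a piece $W$ lying inside the projective line $N\subset E$; this cycle bounds a $3$-chain $M$, so $\int\omega(t)$ over the full boundary vanishes.  Then $\int_W\omega(t)$ is controlled by the \emph{cohomological} estimate $\int_N\omega(t)\le C(T-t)$ of Lemma \ref{lemmaintN} --- this is exactly where the hypothesis $T-t\le r^\delta$ is used --- while $\int_F\omega(t)$ is controlled by the Wirtinger-type bound $|\omega|_F|\le dV_{g|F}$ together with the metric comparison $g(t)\le C\tilde g$ implied by Lemma \ref{lemmaest2}, giving $O(r^\delta)$ in Lemma \ref{lemma:intF}.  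The essential idea you are missing is that the $T-t$ smallness is a property of $[\omega(t)]$ restricted to $E$, and it must be transported out to $\tilde L$ through a boundary relation rather than through potentials or pointwise bounds.  To repair your proposal you would need to introduce the chain $M$ and the pieces $F$, $W$ and reprove Lemmas \ref{lemmaintN} and \ref{lemma:intF}; as written it is not a proof.
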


Before we prove this, we show how it implies the result of  the theorem.

\begin{proof}[Proof of Theorem \ref{diamA} given Lemma \ref{lemma:intL}]
Note that $z', z''$ lie in the disc $\tilde{L}$ which is of radius $r_0 = 2r \cos \theta \tan(\emax)$.
 Moreover, $z',z''$ lie in the square $S \subset \tilde{L}$ with the same center as $\tilde{L}$ and side of length $2r_1$ with
$r_1= 2r \sin \theta$.  (Note that $2r_1 \le 4r \sin \pi/12 < \sqrt{2} r_0$, and since a square of side $a$ fits inside a circle of radius $b$ if $a \le \sqrt{2} b$, this shows that the square $S$ does indeed lie in the disk $\tilde{L}$).

The square $S$ lies inside a plane $P \cong \mathbb{R}^2$.  We may assume that the center of $S$ corresponds to the origin in $\mathbb{R}^2$.  Let's change coordinates now, and use $x$ and $y$ for the coordinates in $\mathbb{R}^2$, so that the center of the square $S$ corresponds to $x=y=0$.
In these coordinates, we may assume that the points $z', z''$ are given by $(-r_1, 0)$ and $(r_1, 0)$ respectively.  From Lemma \ref{lemma:intL} we see that for $t$ with $(T-t) \le r^{\delta}$,
\begin{equation}
\int_S \omega(t) \le C r^{\delta}.
\end{equation}
Hence
\begin{equation}
\int_{-r_1}^{r_1} \int_{-r_1}^{r_1} (\tr{\geu}{g})(x,y)dxdy \le C r^{\delta}.
\end{equation}
Take $\ve = r \, r_1^{\delta/3}$.  Then since
\begin{equation}
\int_{-\ve}^{\ve} \int_{-r_1}^{r_1} (\tr{\geu}{g})(x,y)dxdy \le C r^{\delta},
\end{equation}
there exists $y'\in (-\ve, \ve)$ with
\begin{equation}
\ \int_{-r_1}^{r_1} (\tr{\geu}{g}) (x, y')dx \le C \frac{r^{\delta}}{\ve},
\end{equation}
Let $w', w''$ be the points $(-r_1, y')$ and $(r_1, y')$ respectively.  Let $\gamma_1$ be the straight line path in $S$ between $w'$ and $w''$.  We have
\begin{align} \nonumber
L_{g(t)} (\gamma_1) & = \int_{-r_1}^{r_1} \sqrt{g(\partial_x, \partial_x)} (x,y') dx \\ \nonumber
& \le \int_{-r_1}^{r_1} (\sqrt{\tr{\geu}{g}})(x, y') \sqrt{\geu(\partial_x, \partial_x)} (x, y') dx \\  \nonumber
& \le \left( \int_{-r_1}^{r_1} (\tr{\geu}{g})(x, y') dx \right)^{1/2} \left( \int_{-r_1}^{r_1}  \geu(\partial_x, \partial_x) (x, y') dx \right)^{1/2} \\
& \le  \frac{Cr^{\delta/2}r_1^{1/2}}{\ve^{1/2}} = C r^{\delta/2-1/2} r_1^{1/2 - \delta/6} \le C r^{\delta/3},
\end{align}
using the fact that $r_1 \le r$.
On the other hand, since $g \le r^{-2} \geu$, we may estimate the length of the straight line path $\gamma_2$ in $S$ between $z'$ and $w'$ by
\begin{equation}
L_{g(t)} (\gamma_2) = \int_{-\ve}^{\ve} \sqrt{g(\partial_y, \partial_y)}(-r_1, y) dy \le \frac{C\ve}{r} \le C r^{\delta/3}.
\end{equation}
Similarly, if we let $\gamma_3$ be the straight line path in $S$ between $w''$ and $z''$ we have
\begin{equation}
L_{g(t)} (\gamma_3) \le C r^{\delta/3}.
\end{equation}
Thus, if $\gamma$ is the piecewise smooth path in $S$ from $z'$ to $z''$ obtained by going along $\gamma_2$ then $\gamma_1$ and finally along $\gamma_3$, we see that
\begin{equation}
L_{g(t)} (\gamma) = L_{g(t)} (\gamma_1) + L_{g(t)} (\gamma_2) + L_{g(t)} (\gamma_3) \le C r^{\delta/3}.
\end{equation}
Since the path $\gamma$ lies in $S \subset \tilde{L} \subset A_{r,4r}$ we have 
established the estimate (\ref{eqn:diamA1}).

For (\ref{eqn:diamA2}), observe that, since $\omega(t)$ converges to $\omega_T$ uniformly in $C^{\infty}$ on compact subsets of $B \setminus \{0 \}$, we may let $t \rightarrow T$ in Lemma \ref{lemma:intL} to obtain
\begin{equation}
\int_{\tilde{L}} \omega_T \le C r^{\delta}.
\end{equation} 
Then we repeat word for word the argument given above with $g(t)$ replaced by $g_T$, and we obtain a path $\gamma$ between $z'$ and $z''$ with $L_{g_T} (\gamma) \le C r^{\delta}$.  This completes the proof of Theorem \ref{diamA}. \qed
\end{proof}

It now remains to prove Lemma \ref{lemma:intL}.  We will do this by exhibiting $\cup_{j=0}^k \tilde{L}_j$ as a piece of the  boundary of a certain $3$-chain inside $\tilde{B}$ and then applying Stokes' theorem.

First define a subset $F$ of $\mathbb{C}^n$ by
\begin{equation}
F = \{ ( 2ru \cos \theta, 2ru e^{iv} \cos \theta \tan (\emax), 0, \ldots, 0) \in \mathbb{C}^n \ | \ u \in [0,1], v \in [0,2\pi] \}.
\end{equation} 
Note that the points in $F$ with $u=1$ coincide with the boundary of the disk $\tilde{L}$.  Thus $F$ connects the boundary of $\tilde{L}$ to the origin in $\mathbb{C}^n$.  If we exclude $u=0$ and $u=1$ in $F$ then $F$ is a real submanifold of $\mathbb{C}^n$ of dimension 2.  We have:

\begin{lemma} \label{lemma:intF}
There exists a uniform constant $C$ such that
\begin{equation}
 \left| \int_F \omega(t)  \right| \le C r^{\delta}.
\end{equation}
\end{lemma}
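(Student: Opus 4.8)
The plan is to estimate $\int_F\omega(t)$ directly in the given parametrization, using only the Cauchy--Schwarz inequality together with the local estimates (i) and (iii) of Lemma~\ref{lemmaest2}. Write $p(u,v)=(2ru\cos\theta,\,2ru\,e^{iv}\cos\theta\tan(\emax),\,0,\ldots,0)$ for $(u,v)\in[0,1]\times[0,2\pi]$, so that $F$ is the image of $p$ and
\[
\left|\int_F\omega(t)\right|\;\le\;\int_0^{2\pi}\!\!\int_0^1\bigl|\omega(t)(\partial_u p,\partial_v p)\bigr|\,du\,dv\;\le\;\int_0^{2\pi}\!\!\int_0^1|\partial_u p|_{g(t)}\,|\partial_v p|_{g(t)}\,du\,dv,
\]
the last step because $\omega(t)(X,Y)=g(t)(JX,Y)$ forces $|\omega(t)(X,Y)|\le|X|_{g(t)}|Y|_{g(t)}$. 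Since $F\subset\overline{B_{4r}}$, for $r$ small every point of $F\setminus\{0\}$ lies where the estimates of Lemma~\ref{lemmaest2} apply.

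The key point, which I would isolate first, is that $\partial_u p$ is a \emph{radial} vector: because $p(u,v)=u\,\partial_u p(u,v)$, the vector $\partial_u p$ is parallel to the position vector, and its Euclidean length $a:=2r\cos\theta\sec(\emax)$ is independent of $(u,v)$ and satisfies $r\le a\le 3r$ (using $\theta\in[0,\emax/2]$ and $\emax<\pi/6$). Writing $\rho:=|p|_{\mathrm{Eucl}}=ua$ for the distance to the origin and $W$ for the unit Euclidean radial field, we have $\partial_u p=aW$, so the \emph{radial} estimate (iii) of Lemma~\ref{lemmaest2} gives $|\partial_u p|_{g(t)}=a\,|W|_{g(t)}\le C\,r\,\rho^{\delta-1}$. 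For the other direction a one-line computation gives $|\partial_v p|_{\mathrm{Eucl}}=2ru\cos\theta\tan(\emax)=\rho\sin(\emax)\le\rho$, so estimate (i) of Lemma~\ref{lemmaest2} yields $|\partial_v p|_{g(t)}\le C\rho^{-1}|\partial_v p|_{\mathrm{Eucl}}\le C$.

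Putting these together with $\rho=ua$ and $r\le a\le 3r$, so that $\rho^{\delta-1}\le C\,r^{\delta-1}u^{\delta-1}$ (here $\delta-1<0$), I obtain $|\partial_u p|_{g(t)}\,|\partial_v p|_{g(t)}\le C\,r^{\delta}u^{\delta-1}$, and since $\delta=k/(k+1)\in(0,1)$ the function $u^{\delta-1}$ is integrable on $[0,1]$; integrating in $u$ and $v$ gives the desired bound $|\int_F\omega(t)|\le C r^{\delta}$, with $C$ uniform in $t\in[0,T)$ because the constants in Lemma~\ref{lemmaest2} are. The only place the argument could fail, and the one point that really uses the geometry of the flow, is the estimate for $|\partial_u p|_{g(t)}$: the generic bound $\omega(t)\le C r^{-2}\omega_{\mathrm{Eucl}}$ alone would give only $|\partial_u p|_{g(t)}\le Cr/\rho\approx C/u$, whose integral over $[0,1]$ diverges, so it is essential that the metric behaves better than this along radial directions, which is exactly the content of part (iii) of Lemma~\ref{lemmaest2}.
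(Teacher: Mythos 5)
Your proof is correct and is essentially the same argument as the paper's, just organized differently: the paper introduces the auxiliary warped metric $\tilde{g}=\rho^{-2(1-\delta)}d\rho^2+g_{S^{2n-1}}$, shows $g(t)\le C\tilde{g}$ from parts (i) and (iii) of Lemma~\ref{lemmaest2}, proves the Wirtinger-type inequality $|\omega|_F|\le dV_{g|F}$, and computes $\int_F dV_{\tilde{g}|F}=O(r^\delta)$, whereas you work directly in the $(u,v)$-parametrization and apply the same two estimates of Lemma~\ref{lemmaest2} pointwise, with the elementary bound $|\omega(X,Y)|\le|X|_g|Y|_g$ playing the role of Wirtinger. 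Both hinge on the same key observation, which you correctly isolate: $\partial_u p$ is purely radial, so the improved radial bound (iii) applies and makes the integrand $O(u^{\delta-1})$, integrable near $u=0$, where the crude bound (i) alone would give a divergent $O(u^{-1})$.
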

\begin{proof}
To prove this, we first calculate the volume of $F$ with respect to a certain Riemannian metric which we now define.  Let $\tilde{g}$ be the Riemannian metric on $\mathbb{C}^n= \mathbb{R}^{2n}$ given by
\begin{equation}
\tilde{g} = \frac{1}{\rho^{2(1-\delta)}}d \rho^2 + g_{S^{2n-1}},
\end{equation}
where $\rho$ is a radial coordinate and $g_{S^{2n-1}}$ is the standard metric on $S^{2n-1}$.  (Recall that the Euclidean metric is $d\rho^2 + \rho^2 g_{S^{2n-1}}$).  

A straightforward computation shows that if $dV_{\tilde{g}|F}$ is the volume form on $F$ induced by $\tilde{g}$, then
\begin{equation} \label{intF}
\int_F dV_{\tilde{g}|F} = \frac{2 \pi}{\delta} \sin (\emax) \left( \frac{2r \cos \theta}{\cos (\emax)} \right)^{\delta},
\end{equation}
and hence is of order $O(r^{\delta})$.

Next we make the observation that if $\omega$ is a K\"ahler form on $\mathbb{C}^n$ with associated metric $g$ and $Y$ a real submanifold of dimension 2 then
\begin{equation} \label{vf}
- dV_{g|Y} \le \omega|_Y \le  dV_{g|Y},
\end{equation}
where $V_{g|Y}$ is the volume form induced by $g$ on $Y$.
It is a well-known fact for submanifolds in a K\"ahler manifold. To see this, it suffices to compute at a single point $p$.  We may choose an orthonormal basis $e_1, \ldots, e_{2n}$ for the tangent space of $\mathbb{C}^n$ at $p$ with respect to $g$ such that
\begin{equation}
\omega|_p = e_1^* \wedge e_2^* + \cdots  + e_{2n-1}^* \wedge e_{2n}^*.
\end{equation}
Now choose two unit orthogonal vectors $e$, $e'$ (with respect to $g$) lying in $T_pY$, which we may write as
\begin{equation}
e= \sum_{i=1}^{2n} a_i e_i, \ e' = \sum_{i=1}^{2n} a'_i e_i,
\end{equation}
Then $|dV_{g|Y} (e, e')| =1$.  And
\begin{align} \nonumber | \omega (e, e') | & = | a_1 a_2' - a_2 a_1' + \cdots +  a_{2n-1} a_{2n}' - a_{2n} a'_{2n-1}| \\ \nonumber
&\le |a_1 a'_2|+|a_2 a'_1|+ \cdots+ |a_{2n-1} a'_{2n}|+|a_{2n} a'_{2n-1}| \\ 
&\le (|a_1|^2 + \cdots +|a_{2n}|^2) ^{1/2} ( |a_1'|^2 + \cdots + |a'|_{2n}^2)^{1/2} =1 
\end{align}
since $e,e'$ are unit vectors. The claim (\ref{vf}) follows.

We can now finish the proof of the lemma.  From Lemma \ref{lemmaest2} we have $g(t) \le C \tilde{g}$ in $B$, for some uniform constant $C$.  Hence $dV_{g(t)|F} \le C' dV_{\tilde{g}|F}$ and
applying (\ref{vf}), (\ref{intF}),
\begin{equation}
\left| \int_F \omega(t) \right| \le    \int_F dV_{g(t)|F} \le C' \int_F dV_{\tilde{g}|F} \le C'' r^{\delta},
\end{equation}
as required. \qed
\end{proof}

As with $\tilde{L}$ it will be convenient to write $F_j = j \cdot F$, where $j=0, 1, 2, \ldots, k-1 \in \mathbb{Z}_k$.  By the invariance of $\omega(t)$, Lemma \ref{lemma:intF} holds with $F$ replaced by $F_j$ for $j=1, 2, \ldots, k-1$.  Finally:

\begin{proof}[Proof of Lemma \ref{lemma:intL}]
To finish the proof of Lemma \ref{lemma:intL}, we need to work on the blow-up $\tilde{B}$.   
 In fact, we are only interested in a complex submanifold $\tilde{B}_2$ of $\tilde{B}$ of complex dimension 2,
 defined by 
\begin{eqnarray} \nonumber
\tilde{B}_2 &  = & \{ ([Z_1, Z_2, 0, \ldots, 0], (z^1, z^2, 0, \ldots, 0)) \in \mathbb{P}^{n-1} \times B \ | \\ && \ \ \sigma \textrm{ is in the line } \lambda \mapsto \lambda( Z_1^k, Z_2^k, 0, \ldots, 0) \}
\end{eqnarray}
 Write $\pi_2$ for the restriction of $\pi$ to $\tilde{B}_2$ and $N:=\pi_2^{-1} (0)$, a projective line in the exceptional divisor $E$.

In the chart $\{ Z_1 \neq 0 \}$, we use local coordinates $y$ and $w$ given by
\begin{equation}
\sigma = \frac{y}{Z_1^k} (Z_1^k, Z_2^k, 0, \ldots, 0) \quad \textrm{and} \quad w = \frac{Z_2}{Z_1}.
\end{equation}
Note that $\sigma = (y, yw^k, 0, \ldots, 0)$.
 Recall that the disk $\tilde{L}$ is given by points 
 \begin{equation}
 (z^1, z^2, 0, \ldots,0)=(2r\cos \theta, 2r e^{iv} \cos \theta \tan \eta, 0 \ldots, 0) \in B_2
 \end{equation}
  with $\eta \in [0, \emax]$ and $v \in [0,2\pi]$.  We consider the line segments
\begin{equation}
u \in [0,1] \mapsto (2ru \cos \theta, 2r u e^{iv} \cos \theta \tan \eta, 0, \ldots, 0) \in B_2,
\end{equation}
and also the corresponding line segments we obtain by applying $j \in \mathbb{Z}_k$ for $j=1, \ldots, k-1$.
In the blow-up $\tilde{B}_2$, using the coordinates $y,w$, these line segments correspond to
\begin{equation}
u \in [0,1] \mapsto (y, w) = (2ru \cos \theta,  e^{\frac{iv}{k}}  \left( \tan \eta\right)^{1/k}) \in \tilde{B}_2.
\end{equation}
The line segments intersect $N \subset \tilde{B}_2$ at the points
\begin{equation}
W = \{ (0, [1, e^{\frac{iv}{k}} \left( \tan \eta \right)^{1/k}, 0 \ldots, 0 ] ) \ | \ v \in [0, 2\pi], \ \eta \in [0, \emax] \} \in \tilde{B}_2.
\end{equation}
The interior of $W$ is a submanifold of $\tilde{B}_2$ of real dimension two, contained within the complex submanifold $N$.  We have from (\ref{intN}),
\begin{equation}
0 \le \int_W \omega(t) \le \int_{N} \omega(t) \le C(T-t).
\end{equation}
Thus we have
\begin{equation} \label{W}
\left|  \int_W \omega(t) \right|  \le C r^{\delta} \quad \textrm{for all } t \ \textrm{with } 0<T-t \le r^{\delta}.
\end{equation}

Now $W \cup \bigcup_{j=0}^{k-1} F_j \cup  \tilde{L}_j$ defines a boundary in $\tilde{B}$.  Indeed, it is the boundary of the set 
\begin{eqnarray} \nonumber
M & = & \mathbb{Z}_k \cdot \{ ( 2ru \cos \theta, 2ru e^{iv} \cos \theta \tan \eta, 0, \ldots, 0) \in \mathbb{C}^n \ | \\ && \mbox{} \ u \in [0,1], v \in [0,2\pi], \eta \in [0, \emax] \}.
\end{eqnarray}
 Hence by Stokes' Theorem, since $\omega(t)$ is closed,
\begin{equation}
\int_{W \cup \bigcup_{j=0}^{k-1} F_j \cup  \tilde{L}_j} \omega(t)=0.
\end{equation}
Thus combining Lemma \ref{lemma:intF} with (\ref{W}) we have
\begin{equation}
\int_{\bigcup_{j=0}^{k-1} \tilde{L}_j} \omega(t)  \le Cr^{\delta},
\end{equation}
and  Lemma \ref{lemma:intL} follows. \qed
\end{proof}

\section{The Metric Completion}

In this section, we give the proof of parts (ii) and (iii) of Theorem \ref{mainthm}.  Again, we assume that there is just one $(-k)$ exceptional divisor $E$ which is mapped by $\pi$ to $y_0$.  
 
As in \cite{SW2}, we define a distance function $d_T$ on $Y$ as follows.  Since $\pi$ is an isomorphism from $X\setminus E$ to $Y \setminus \{y_0 \}$, we may regard $g_T$ as a smooth K\"ahler metric on $Y \setminus \{ y_0\}$.  We extend $g_T$ to a (possibly discontinuous) metric $\tilde{g}_T$ on $Y$ by setting $\tilde{g}_T$ to be zero at $y_0$.  Then we define a distance function $d_T: Y \times Y \rightarrow \mathbb{R}$ by
\begin{equation}
d_T(y_1, y_2) = \inf_{\gamma}  \int_{0}^1  \sqrt{ \tilde{g}_T ( \gamma'(s), \gamma'(s))}  ds,
\end{equation}
where the infimum is taken over all piecewise smooth paths $\gamma: [0,1] \rightarrow Y$ with $\gamma(0)=y_1$, $\gamma(1)=y_2$.

\begin{proposition} $(Y, d_T)$ is a compact metric space and 
$(X, g(t))$ converges to $(Y, d_T)$ in the Gromov-Hausdorff sense as $t \rightarrow T^+$.
\end{proposition}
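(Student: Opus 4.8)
The plan is to prove the statement in two parts: (A) that $d_T$ is a genuine distance function on $Y$ inducing the orbifold topology, whence $(Y,d_T)$ is a compact metric space (in fact homeomorphic to $Y$); and (B) that, for $t$ close to $T$, the blow-down map $\pi\colon X\to Y$ is an $\varepsilon_t$-Gromov--Hausdorff approximation with $\varepsilon_t\to0$, which yields the desired convergence as $t\to T^-$ (the flow on $X$ being defined for $t<T$).

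For (A), symmetry, finiteness and the triangle inequality for $d_T$ are immediate, so the only point is positive-definiteness. Given $y_1\ne y_2$, I would choose $c>0$ small enough that any of $y_1,y_2$ other than $y_0$ lies at Euclidean distance $>2c$ from $y_0$; then any path from $y_1$ to $y_2$ either stays in the compact set $Y\setminus B_c\subset Y'$, where $g_T$ is smooth and nondegenerate, so its $\tilde g_T$-length is at least $d_{g_T|_{Y\setminus B_c}}(y_1,y_2)>0$, or it enters $B_c$ and must traverse the annulus $A_{c,2c}$, so its length is at least the positive $g_T$-distance between the two boundary spheres of that annulus; either way $d_T(y_1,y_2)>0$. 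Next I would check that the identity map from $Y$ with its orbifold topology onto $(Y,d_T)$ is continuous: away from $y_0$ this is clear, and if $y_n\to y_0$ then the radial path $s\mapsto sy_n$ joins $y_n$ to $y_0$ with $\tilde g_T$-length $\le C|y_n|^\delta\to0$ by Lemma \ref{klemma}(ii). A continuous bijection from a compact Hausdorff space onto a Hausdorff space is a homeomorphism, so $(Y,d_T)$ is a compact metric space.

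For (B), fix $\varepsilon>0$. I would first pick $c>0$ so small that $\diam_{g_T}(\overline{B_c}\setminus\{0\})<\varepsilon$ (Corollary \ref{cor:diamgT}) and, using the radial estimate of Lemma \ref{klemma}(ii) together with the bound $\diam_{g(t)}(E)\le C(T-t)^{1/3}$ of Lemma \ref{lemmaintN}, $\diam_{g(t)}(\pi^{-1}(\overline{B_c}))<2\varepsilon$ for all $t$ with $T-t$ small (depending on $c$). Since $g(t)\to g_T$ in $C^\infty$ on the compact set $\overline{Y\setminus B_c}\subset Y'$ (Lemma \ref{basicestimates}), I may further require $(1-\varepsilon)g_T\le g(t)\le(1+\varepsilon)g_T$ there. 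Then for $x,x'\in X$, writing $y=\pi(x)$, $y'=\pi(x')$, I would bound $|d_{g(t)}(x,x')-d_T(y,y')|$ by $O(\varepsilon)$ as follows. To bound $d_T(y,y')$ from above: take a near-minimizing $g(t)$-path $\gamma$ from $x$ to $x'$; if it avoids $\pi^{-1}(B_c)$, push it forward and compare lengths on the core; if it enters $\pi^{-1}(B_c)$, excise the sub-arc between the first and last crossings of $\pi^{-1}(\partial B_c)$, push the two outer arcs forward, and reconnect their images---two points of $\partial B_c$---by a $g_T$-path in $\overline{B_c}\setminus\{0\}$ of length $<\varepsilon$, obtaining a path in $Y$ of $\tilde g_T$-length $\le(1+\varepsilon)L_{g(t)}(\gamma)+\varepsilon$. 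The reverse bound on $d_{g(t)}(x,x')$ is symmetric: excise a near-minimizing $d_T$-path along $\partial B_c$ and reconnect the lifted endpoints inside $\pi^{-1}(\overline{B_c})$ using the $g(t)$-diameter bound. The cases $y$ or $y'\in B_c$ reduce directly to the diameter bounds. As $\diam_{g(t)}(X)$ is uniformly bounded (same estimates plus compactness), this gives $|d_{g(t)}(x,x')-d_T(\pi x,\pi x')|\le C\varepsilon$ for all $x,x'$ once $T-t$ is small; with the surjectivity of $\pi$, this makes $\pi$ a $C\varepsilon$-approximation, so $d_{GH}((X,g(t)),(Y,d_T))\to0$ as $t\to T^-$.

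The hard part is the region around $E$ in $X$ (equivalently around $y_0$ in $Y$), where $g(t)$ is neither uniformly comparable to $g_T$ nor uniformly bounded, so one cannot simply transplant paths and compare metrics. This is exactly where the estimates of the previous sections are needed: the radial length bound and $\diam_{g(t)}(E)\le C(T-t)^{1/3}$ make $\pi^{-1}(B_c)$ have small $g(t)$-diameter, and Corollary \ref{cor:diamgT} does the same for $g_T$ near $y_0$, so that every path through this region can be rerouted along its boundary at cost $O(\varepsilon)$. The delicate point in the cutting argument is that each reconnection must be carried out in the metric ($g(t)$ or $g_T$) matching the side one is estimating, since there is no pointwise metric comparison available near $E$.
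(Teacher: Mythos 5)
Your proposal is correct, and since the paper itself only cites \cite{SW2} for the proof of this proposition, your reconstruction matches the expected cut-and-reroute argument: small-diameter estimates for the regions $\pi^{-1}(\overline{B_c})$ and $\overline{B_c}$, plus $C^\infty$ convergence on the complementary compact set, make $\pi$ an $O(\varepsilon)$-Gromov--Hausdorff approximation for $T-t$ small. (Also, the $T^+$ in the statement is a typo for $T^-$, which you have correctly read.)

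Two minor remarks on emphasis, not correctness. First, ``finiteness\ldots is immediate'' is a slight overstatement: the crude bound $g_T \le C r^{-2} g_{\textrm{Eucl}}$ from Lemma \ref{lemmaest2}(i) gives a divergent radial integral, so finiteness of $d_T$ already requires the sharper radial estimate of Lemma \ref{klemma}(ii) --- which you of course do use a line later. Second, and more substantively, you invoke Corollary \ref{cor:diamgT} (which rests on the new Theorem \ref{diamA}) to bound $\diam_{g_T}(\overline{B_c}\setminus\{0\})$. For \emph{this} proposition that is more than is needed: since $d_T$ is defined with $\tilde g_T$ vanishing at $y_0$, paths are allowed to pass through $y_0$, and two radial segments meeting at $y_0$ already give $\diam_{d_T}(\overline{B_c}) \le 2Cc^\delta$ from Lemma \ref{klemma}(ii) alone. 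That weaker estimate is what \cite{SW2} uses here, and it keeps the logic clean: Theorem \ref{diamA} and Corollary \ref{cor:diamgT} --- which bound the diameter of the punctured ball \emph{without} passing through $y_0$ --- are precisely what make the next proposition (that $(Y,d_T)$ is the metric completion of $(Y',d_{g_T})$) new, and are not needed for Gromov--Hausdorff convergence. Your argument is still valid, just slightly inefficient in its citation of the machinery.
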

\begin{proof}
The proof is the same as in \cite{SW2}. \qed
\end{proof}

The main result of this section is:

\begin{proposition}
$(Y, d_T)$ is the metric completion of $(Y \setminus \{ y_0 \}, d_{g_T})$, where we are writing $d_{g_T}$ for the distance function on $Y \setminus \{ y_0 \}$ induced by $g_T$.
\end{proposition}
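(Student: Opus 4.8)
The plan is to show two things: first, that the metric space $(Y, d_T)$ is complete, and second, that $Y\setminus\{y_0\}$ is dense in it. Density is immediate, since $y_0$ is a single point and every punctured neighborhood of $y_0$ in $Y$ is nonempty; so the real content is completeness together with the identification of $d_T$ restricted to $Y\setminus\{y_0\}$ with $d_{g_T}$. Since $(Y,d_T)$ is already known to be a compact metric space (by the previous proposition), it is automatically complete; thus the crux is to prove that the distance function $d_T$, when restricted to $Y\setminus\{y_0\}$, agrees with the intrinsic Riemannian distance $d_{g_T}$ of the smooth metric $g_T$ there. The inequality $d_T \le d_{g_T}$ is trivial because any $g_T$-path in $Y\setminus\{y_0\}$ is an admissible competitor in the infimum defining $d_T$. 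The reverse inequality $d_{g_T}\le d_T$ is where Corollary \ref{cor:diamgT} and Lemma \ref{klemma}(ii) enter.

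The key step is to rule out shortcuts through $y_0$: given $y_1, y_2\in Y\setminus\{y_0\}$ and any piecewise smooth path $\gamma$ from $y_1$ to $y_2$ in $Y$ that passes through (or near) $y_0$, I want to produce a path that stays in $Y\setminus\{y_0\}$ and is no longer, up to an error that can be made arbitrarily small. Concretely, fix $\varepsilon>0$ and choose $c_0$ from Corollary \ref{cor:diamgT} so that $\mathrm{diam}_{g_T}(B_{c}\setminus\{0\})<\varepsilon$ for all $0<c\le c_0$. If $\gamma$ enters the ball $B_{c}$, let $p$ and $q$ be the first and last points where $\gamma$ meets $\partial B_{c}$; replace the portion of $\gamma$ between $p$ and $q$ by a path inside $B_{c}\setminus\{0\}$ of $g_T$-length less than $\varepsilon$, which exists by Corollary \ref{cor:diamgT}. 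The modified path lies in $Y\setminus\{y_0\}$ (after finitely many such surgeries if $\gamma$ enters and exits $B_c$ several times — but one can reduce to a single excursion by taking $p$, $q$ to be the first/last crossings), has $g_T$-length at most $L_{g_T}(\gamma) + \varepsilon$, and joins $y_1$ to $y_2$. Hence $d_{g_T}(y_1,y_2)\le L_{g_T}(\gamma)+\varepsilon$. Taking the infimum over $\gamma$ and then letting $\varepsilon\to 0$ gives $d_{g_T}(y_1,y_2)\le d_T(y_1,y_2)$, as desired. One should be slightly careful that the infimum defining $d_T$ allows paths through $y_0$ where $\tilde g_T$ vanishes: a path literally passing through $y_0$ can be approximated by one avoiding $y_0$ using the radial estimate Lemma \ref{klemma}(ii), which shows the radial contribution near $0$ is $O(|z|^\delta)\to 0$, so the value of the infimum is unchanged whether or not we insist the competitor avoids $y_0$.

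With $d_T|_{Y\setminus\{y_0\}} = d_{g_T}$ established, completeness of $(Y,d_T)$ follows from its compactness (previous proposition), and density of $Y\setminus\{y_0\}$ is clear; therefore $(Y,d_T)$ is a completion of $(Y\setminus\{y_0\}, d_{g_T})$, and since the completion is unique it is \emph{the} metric completion. The main obstacle is the reverse inequality just discussed — specifically, verifying that the surgery near $y_0$ genuinely stays in $Y\setminus\{y_0\}$ and produces a piecewise smooth competitor, and that Corollary \ref{cor:diamgT} (which is stated for the Euclidean ball $B_c$ in the local uniformizing chart) can be globalized, i.e. that a genuinely small $g_T$-neighborhood of $y_0$ in $Y$ is contained in the image of such a coordinate ball. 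The latter is routine since the orbifold chart covers a full neighborhood of $y_0$ and $g_T$ is continuous (indeed smooth) away from $y_0$, so small $d_{g_T}$-balls are contained in small coordinate balls; but it needs to be said. The case of several exceptional divisors is identical, handling each $y_i$ separately.
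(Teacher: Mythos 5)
Your proof is correct and follows essentially the same route as the paper: the easy inequality $d_T\le d_{g_T}$ is immediate from the definition, and the hard inequality $d_{g_T}\le d_T$ is obtained by taking a near-optimal competitor for $d_T$, locating its first entry and last exit from a small coordinate ball around $y_0$, and replacing the intervening arc with a short path in the punctured ball supplied by Corollary \ref{cor:diamgT}. The only cosmetic difference is that you appeal to uniqueness of the metric completion at the end, and add a (harmless but superfluous) remark about approximating paths that literally hit $y_0$ via Lemma \ref{klemma}(ii) --- the surgery already discards the portion of the path inside $B_c$ wholesale, so that case needs no separate treatment.
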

\begin{proof}  First observe that the inequality 
\begin{equation} \label{other} 
d_{g_T} \ge d_T|_{Y \setminus \{y_0\}}.
\end{equation}
follows immediately from the definition.  

We will show:
\begin{equation} \label{firstineq}
d_{g_T} \le d_T|_{Y \setminus \{y_0\}}.
\end{equation}
To see this, let $p, q \in Y \setminus \{ y_0 \}$, and let $\ve>0$.  By the definition of $d_T$, there exists a path $\gamma:[0,1] \rightarrow Y$ from $p$ to $q$ such that 
\begin{equation}
\int_0^1 \sqrt{ \tilde{g}_T (\gamma'(s), \gamma'(s))} ds \le d_T(p,q) + \ve/4.
\end{equation}
If $\gamma$ does not pass through the origin then we have
\begin{eqnarray} \nonumber
d_{g_T}(p,q) & \le & \int_0^1 \sqrt{ g_T (\gamma'(s), \gamma'(s))} ds \\  \nonumber
& = & \int_0^1 \sqrt{ \tilde{g}_T (\gamma'(s), \gamma'(s))} ds \\ \label{1dT}
& \le &  d_T(p,q) + \ve/4 < d_T(p,q) + \ve.
\end{eqnarray}
Assume instead that $\gamma$ \emph{does} pass through the origin.
We identify a neighborhood of $y_0$ in $Y$ with a small ball $B$ in $\mathbb{C}^n$.
Applying Corollary \ref{cor:diamgT}, we see that there exists a small constant $c>0$ such that $\diam_{g_T} ( B_c \setminus \{0 \}) < \ve/4$, where $B_c$ denotes the ball of radius $c$ with respect to the Euclidean metric on $B$.   Shrinking $c$ if necessary, assume that $p,q \notin \overline{B_c}$.  Let $\gamma (s_1)$ be the point where $\gamma$ first enters $\overline{B_{c}}$ (we know it enters, since $\gamma$ passes through the origin).  Let $\gamma(s_1)$ be the point at which $\gamma$ last leaves $\overline{B_c}$.  Then since $\diam_{g_T} ( \overline{B_c} \setminus \{0 \}) \le \ve/4$, we can find a path $\sigma$ from $\gamma(s_1)$ to $\gamma(s_2)$ contained in $\ov{B_c}$ with the length of $\sigma$ less than $\ve/2$.  That is:
 \begin{equation}
 \int_0^1 \sqrt{ g_T (\sigma'(s), \sigma'(s))} ds \le \ve/2.
 \end{equation}
The path in $Y \setminus \{y_0 \}$ constructed by going first along $\gamma$ from $p$ to $\gamma(s_1)$; second along $\sigma$ from $\gamma(s_1)$ to $\gamma(s_2)$; and  third along $\gamma$ from $\gamma(s_2)$ to $q$, has length $L$, say, with respect to $g_T$ with   $L \le d_T(p,q) + \ve/4 + \ve/2.$  Thus we have 
\begin{equation}
d_{g_T}(p,q) \le L < d_T (p,q) + \ve.
\end{equation}
Combining with (\ref{1dT}), we see that since $p,q$ were any points in $Y\setminus \{ y_0\}$ and $\ve>0$ was arbitrary, this proves (\ref{firstineq}).

Thus $d_{g_T} =  d_T|_{Y \setminus \{y_0\}}$.  Since $(Y, d_T)$ is complete, it follows that it is the metric completion of $( Y \setminus \{0 \}, d_{g_T})$. \qed
\end{proof}

Thus we have proved parts (ii) and (iii) of Theorem \ref{mainthm}.

\pagebreak[3]
\section{Continuing the flow on $Y$} \label{sectiv}

In this section we first show the existence of a solution of the K\"ahler-Ricci flow on the orbifold $Y$, which converges in $C^{\infty}$ on compact subsets outside the orbifold points to the metric $g_T$.  This will establish part (iv) of Theorem \ref{mainthm}.  We give only a brief outline of the proof here, since the arguments in \cite{SW2} are very similar.

As above, we will assume there is only one exceptional divisor $E$, blowing down to the point $y_0 \in Y$.  As in Corollary 4.1 of  \cite{SW2}, we have $C^{\infty}$ estimates  for $g(t)$ on $X \setminus E$ for $t \in [0,T)$.  The estimate for the $k$th derivative degenerates like $|s|^{-2\alpha_k}_h$ for some $\alpha_k>0$.


From Lemma \ref{basicestimates}, there is a closed positive $(1,1)$-current $\omega_T$ and a bounded function $\varphi_T$ with
\begin{equation}
\omega_T = \pi^* \omega_Y + \ddbar \varphi_T.
\end{equation}
Moreover, $\varphi_T$ is constant on $E$ and hence we can write $\varphi_T = \pi^* \psi_T$ for some bounded function $\psi_T$ on $Y$ which is smooth on $Y \setminus \{ y_0 \}$.  Define
\begin{equation}
\omega' = \omega_Y + \ddbar \psi_T.
\end{equation}
Then $\omega'$ is a closed positive $(1,1)$-current on $Y$ which is smooth on $Y \setminus \{ y_0 \}$ so that in particular $\psi_T$ extends to a plurisubharmonic function on $Y$ with respect to $\omega_Y$.

Since $\omega_T^n/(\pi^* \omega_Y)^n$ is in $L^p(X)$ for some $p>1$ with respect to the measure $(\pi^* \omega_Y)^n$ (see Lemma 5.2 of \cite{SW2}) if follows from 
  it follows from \cite{Kol1, Zha1, EGZ} that $\varphi_T$ and hence $\psi_T$ is continuous.

We now construct a family of reference $(1,1)$-forms on $Y$.  Let
 $\Oorb$ be an orbifold volume form on $Y$ which coincides on the ball $B$ with the Euclidean volume form.  Define
 \begin{equation}
\chi := \ddbar \log \Oorb \in c_1(K_Y),
\end{equation}
so that, in particular, $\chi=0$ on $B$ and $\pi^* \chi$ is a smooth closed (1,1) form on $X$.  

There exists $T'>T$ such that for $t\in[T, T']$, the closed (1,1) form
\begin{equation} \label{hatotY}
\hat{\omega}_{Y,t} =  \omega_Y + (t-T) \chi
\end{equation}
is nonnegative on $Y$ and positive definite on $Y \setminus \{ y_0 \}$ and  $[\hat{\omega}_{Y,t}]$ is a K\"ahler class on $Y$.  Then we construct a family of functions $\psi_{T, \ve}$ on $Y$ which converge to $\psi_T$, following the method of \cite{SoT3}.  For $\ve>0$ sufficiently small and $K$ fixed and sufficiently large, we define a family of volume forms $\Omega_{\ve}$ on $Y$ by
\begin{equation}
\Omega_{\ve} = \frac{|s|_h^{2K} \omega^n(T-\ve)}{\ve + |s|_h^{2K}} + \ve \Oorb, \quad \textrm{on } Y\setminus \{y_0 \},
\end{equation}
and $\Omega_{\ve}|_{y_0} = \ve \Oorb|_{y_0}$.  Define $\psi_{T, \ve}$ to be the unique solution of
\begin{equation} \label{mag}
(\omega_Y + \ddbar \psi_{T, \ve})^n = C_{\ve} \Omega_{\ve}, \quad \sup_Y (\psi_{T, \ve} - \psi_T) = \sup_Y (\psi_{T} - \psi_{T,\ve}),
\end{equation}
with constants $C_{\ve}$ such that $C_{\ve} \int_Y \Omega_{\ve} = \int_Y \omega_Y^n$.  The existence of solutions to (\ref{mag}) follows from the results of \cite{Kob, Zha1, EGZ}.  Moreover, $\psi_{T, \ve}$ lies in $C^k(Y) \cap C^{\infty}(Y\setminus \{ y_0 \})$ for some $k$ which can be chosen to be sufficiently large by raising the value of $K$.

By the definition of $\Omega_{\ve}$ and a generalization of Kolodziej's stability theorem \cite{Kol2, EGZ} we have $\psi_{T, \ve} \rightarrow \psi_T$ in $L^{\infty}(Y)$ as $\ve \rightarrow 0$.  We let $\varphi_{\ve}$ solve
\begin{equation} \label{eqphie}
\frac{\partial \varphi_{\ve}}{\partial t} = \log \frac{(\hat{\omega}_{Y,t} + \ddbar \varphi_{\ve})^n}{\Oorb}, \quad \varphi|_{t=T} = \psi_{T, \ve},
\end{equation}
for $t$ in $[T, T']$.  It follows (with only minor changes from the arguments of  \cite{SW2}) that as $\ve \rightarrow 0$, $\varphi_{\ve}$ converge in $L^{\infty}([T,T'] \times Y)$ to a function $\varphi \in C^0([T, T'] \times Y) \cap C^{\infty}(([T, T'] \times Y) \setminus \{ (T, y_0) \})$.  Moreover, the convergence is smooth on compact subsets of $([T, T'] \times Y) \setminus \{ (T, y_0) \}$ and we have: (cf. \cite{SoT3})

\begin{lemma} \label{lphi}
There exists a unique $\varphi \in C^0([T, T'] \times Y) \cap C^{\infty}(([T, T'] \times Y) \setminus \{ (T, y_0) \})$ satisfying $\varphi|_{t=T} = \psi_T$ and
\begin{equation}
\hat{\omega}_{Y,t} + \ddbar \varphi>0, \quad \frac{\partial \varphi}{\partial t} = \log \frac{ (\hat{\omega}_{Y,t} + \ddbar \varphi)^n}{\Omega_{\emph{orb}}} \quad \textrm{on } \ ([T, T'] \times Y) \setminus \{ (T, y_0) \}.
\end{equation}
\end{lemma}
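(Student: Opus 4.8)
The plan is to establish existence by passing to the limit in the approximating equations \eqref{eqphie}, and to establish uniqueness by a maximum principle argument at the level of potentials. For existence, the family $\varphi_\ve$ solving \eqref{eqphie} has already been shown to converge in $L^\infty([T,T']\times Y)$ to some $\varphi\in C^0([T,T']\times Y)$, with the convergence smooth on compact subsets of $([T,T']\times Y)\setminus\{(T,y_0)\}$; hence $\varphi$ satisfies the parabolic Monge--Amp\`ere equation and the positivity $\hat\omega_{Y,t}+\ddbar\varphi>0$ away from $(T,y_0)$, and $\varphi|_{t=T}=\lim_\ve \psi_{T,\ve}=\psi_T$ by Kolodziej's stability theorem. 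The main point for existence is really to control the approximate solutions uniformly away from the one bad spacetime point: I would use the $L^p$-bound on $\omega_T^n/(\pi^*\omega_Y)^n$ (Lemma 5.2 of \cite{SW2}) and the Kolodziej/Zhang/Eyssidieux--Guedj--Zeriahi estimates to get the $L^\infty$-bound on $\varphi_\ve$, a lower bound $\hat\omega_{Y,t}+\ddbar\varphi_\ve\ge c\,\omega_Y$ on compact subsets of $Y\setminus\{y_0\}$ via the parabolic Schwarz lemma (exactly as in Lemma \ref{basicestimates}(ii)), and then the higher-order estimates follow by the standard local parabolic bootstrap of \cite{SW2}.

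For uniqueness, suppose $\varphi_1,\varphi_2$ are two solutions with the stated regularity and the same initial data $\psi_T$. Set $\Psi=\varphi_1-\varphi_2$. Then $\Psi\in C^0([T,T']\times Y)\cap C^\infty(([T,T']\times Y)\setminus\{(T,y_0)\})$, $\Psi|_{t=T}=0$, and $\Psi$ satisfies
\begin{equation}
\frac{\partial \Psi}{\partial t} = \log\frac{(\hat\omega_{Y,t}+\ddbar\varphi_1)^n}{(\hat\omega_{Y,t}+\ddbar\varphi_2)^n}
\end{equation}
on $([T,T']\times Y)\setminus\{(T,y_0)\}$. I would apply the maximum principle to $\Psi$ on $Y$, being careful about the single excluded point: since $\Psi$ is continuous on the compact space $[T,T']\times Y$ and vanishes at $t=T$, for any $\eta>0$ the function $\Psi-\eta(t-T)$ attains its maximum over $[T,T']\times Y$, and one argues this maximum cannot be attained at an interior point of $(T,T']\times (Y\setminus\{y_0\})$ by the usual concavity-of-$\log\det$ computation, nor (for $\eta$ fixed, because of the $-\eta(t-T)$ term) can a maximizing sequence escape to $(T,y_0)$ where the difference tends to $0$. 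Letting $\eta\to0$ gives $\Psi\le0$, and symmetrically $\Psi\ge0$, so $\varphi_1=\varphi_2$. This is the argument of \cite{SoT3} and is essentially identical to the uniqueness part of the corresponding statement in \cite{SW2}.

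The main obstacle is the behavior at the single singular spacetime point $(T,y_0)$: one does not have a priori derivative control there, so both the existence bootstrap and the maximum principle have to be run on $Y\setminus\{y_0\}$ (or on compact exhaustions of it) and then the continuity of $\varphi$ up to $(T,y_0)$ must be invoked to close the argument. For existence this is handled by the $L^p$-estimate feeding into the Kolodziej-type $L^\infty$ and stability results, which is what forces the hypothesis that $K$ be chosen large in the definition of $\Omega_\ve$; for uniqueness it is handled by the penalization term $-\eta(t-T)$, which pushes any would-be maximum away from $t=T$ and hence away from the only point where control is lost. Since all of these ingredients are already in place in the excerpt or quoted from \cite{SW2, SoT3}, the proof amounts to assembling them, and I would only sketch the parts that differ from \cite{SW2} — namely the orbifold bookkeeping near $y_0$.
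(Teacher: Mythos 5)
Your existence argument coincides with the paper's: $\varphi$ is obtained as the $L^\infty$-limit of the $\varphi_\ve$ solving \eqref{eqphie}, with $C^\infty$-convergence away from the single spacetime point $(T,y_0)$, and continuity up to $t=T$ from the Kolodziej-type stability estimate that gives $\psi_{T,\ve}\to\psi_T$ in $L^\infty$. This is exactly what the paper sketches (deferring details to \cite{SW2}).

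For uniqueness, you take a genuinely different and more elementary route than the paper. The paper points to \cite{SoT3} and later proves the strictly stronger Lemma \ref{lemmau}, whose proof needs the logarithmic barrier $s^2\log|\sigma|_h^2$ and the two-parameter family $\varphi_{s,\ve}$, precisely because there the competitor $\tilde\varphi$ is only assumed bounded (hence possibly singular at $y_0$ for all $t$), so the maximum principle cannot be applied at $y_0$. You instead exploit the stronger regularity built into Lemma \ref{lphi} --- smoothness at $y_0$ for $t>T$ --- and run a direct maximum principle on $\Psi=\varphi_1-\varphi_2$ with the time-penalization $-\eta(t-T)$. That argument is correct and shorter, though your case analysis is slightly misphrased: since $\Psi-\eta(t-T)$ is continuous on the compact $[T,T']\times Y$, its maximum is always attained, and the dichotomy is simply (a) the max is at $t=T$, where $\Psi\equiv 0$, or (b) the max is at some $(t_0,y_*)$ with $t_0>T$, where $\Psi$ is smooth (even if $y_*=y_0$) and the concavity of $\log\det$ together with $\partial_t\ge\eta>0$ gives a contradiction. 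There is no ``escaping sequence'' to rule out, and $y_0$ need not be excluded from the interior case. A small caveat worth recording: at the orbifold point the computation is done in the uniformizing chart with $\mathbb{Z}_k$-invariant lifts, which you flag but do not elaborate. In summary, your proof is correct; your uniqueness argument is a cleaner alternative that works only under the stated $C^0\cap C^\infty$-off-$(T,y_0)$ regularity, whereas the paper's barrier argument (Lemma \ref{lemmau}) buys uniqueness in the much larger $L^\infty$ class, which the paper needs in Section \ref{sectafter}.
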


By definition of $\hat{\omega}_{Y,t}$, the solution $\omega(t)=\hat{\omega}_{Y,t}+\ddbar \varphi$ then solves the K\"ahler-Ricci flow on $Y$ in the orbifold sense for $t \in (T, T')$ and $\omega(t)$ converges to $\omega_T$ as $t \rightarrow T^+$ in $C^{\infty}$ on compact subsets of $Y \setminus \{ y_0 \}$.  We may extend $g(t)$ beyond $T'$ to be a maximal solution  of the flow.  This establishes (iv) of Theorem \ref{mainthm}.

We end this section by establishing a more general uniqueness lemma, which we will use later.

\begin{lemma} \label{lemmau} If  $\tilde{\varphi} \in L^\infty([T, T']\times Y) \cap C^{\infty}( [T, T']\times (Y\setminus \{y_0 \})) $ satisfies 
\begin{equation}
\tilde{\varphi}|_{t=T}=\psi_T \qquad \ddt{\tilde{\varphi}} = \log \frac{(\hat{\omega}_{Y,t} + \ddbar \tilde{\varphi})^n}{\Omega_{\emph{orb}}},  \qquad \textrm{on } [T, T']\times (Y \setminus\{y_0 \}),
\end{equation} then
$\tilde{\varphi}$ is equal to the function $\varphi$ of Lemma \ref{lphi} on $[T, T'] \times Y \setminus \{y_0\}$.
\end{lemma}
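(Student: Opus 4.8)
The statement is a uniqueness (comparison) assertion for a degenerate parabolic Monge--Amp\`ere equation on the non-compact manifold $Y\setminus\{y_0\}$. The plan is to subtract the two equations to obtain a linear parabolic equation for the difference of potentials, and then run a maximum principle in which the puncture at $y_0$ is controlled by an unbounded barrier. This is the argument used for the uniqueness parts of \cite{SoT3, SW2} and of Lemma \ref{lphi}, with the orbifold point $y_0$ playing the role of the exceptional divisor.

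First I would set $\theta=\tvarphi-\varphi$, with $\varphi$ the function of Lemma \ref{lphi}. Write $\omega_\varphi=\hat\omega_{Y,t}+\ddbar\varphi$ and $\omega_{\tvarphi}=\hat\omega_{Y,t}+\ddbar\tvarphi$; both are K\"ahler on $Y\setminus\{y_0\}$ (for $\omega_{\tvarphi}$ this is implicit in $\tvarphi$ being a solution, since the right-hand side of its equation is finite there). With $g_{(s)}=(1-s)\omega_\varphi+s\,\omega_{\tvarphi}$, subtracting the two equations and using $\log\det\omega_{\tvarphi}-\log\det\omega_\varphi=\int_0^1\frac{d}{ds}\log\det g_{(s)}\,ds$ gives $\ddt{\theta}=a^{i\bar j}\partial_i\partial_{\bar j}\theta$ on $[T,T']\times(Y\setminus\{y_0\})$, where $a^{i\bar j}=\int_0^1 g_{(s)}^{i\bar j}\,ds$ is positive definite (but with no control near $y_0$), $\theta|_{t=T}=0$, and $\theta$ is bounded. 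Since $-\theta$ solves the \emph{same} linear equation, it suffices to prove $\theta\le 0$.

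Next I would introduce the barrier. Let $M_0=\sup_X|s|^2_h$ and set $\psi=\log\bigl(M_0/|s|^2_h\bigr)$, which is smooth and nonnegative on $X\setminus E$ and, since $|s|^2_h$ equals $\pi^*(r^{2k})$ near $E$, descends via $\pi$ to a function on $Y\setminus\{y_0\}$ equal to $\log M_0-k\log r^2$ in a uniformizing chart near $y_0$; in particular $\psi\to+\infty$ at $y_0$. The crucial feature is the sign of its complex Hessian: near $y_0$ one has $\ddbar\psi=R(h)\le 0$ by Lemma \ref{lemmakahler} (equivalently $-R(h)\ge 0$ there), so $a^{i\bar j}\partial_i\partial_{\bar j}\psi\le 0$ near $y_0$ for \emph{any} positive $a^{i\bar j}$; away from $y_0$, $\ddbar\psi=R(h)$ is a fixed smooth form, and since $\varphi,\tvarphi$ are smooth there the metrics $\omega_\varphi,\omega_{\tvarphi}$---hence $(a^{i\bar j})^{-1}$---are bounded below on compact subsets of $Y\setminus\{y_0\}$, so $a^{i\bar j}\partial_i\partial_{\bar j}\psi\le C_U$ on $Y\setminus U$, where $U$ is any fixed neighbourhood of $y_0$ and $C_U$ depends on $U$.

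Finally, for $\ve>0$ I would consider $\Theta_\ve=\theta-\ve\psi-(C_U+1)\ve(t-T)$; by the two estimates above it satisfies $(\partial_t-a^{i\bar j}\partial_i\partial_{\bar j})\Theta_\ve<0$ on $(T,T']\times(Y\setminus\{y_0\})$. Since $\psi\to+\infty$ at $y_0$ and $\theta$ is bounded above, $\Theta_\ve\to-\infty$ there, so $\sup\Theta_\ve$ over $[T,T']\times(Y\setminus\{y_0\})$ is attained on a compact set disjoint from $y_0$; being a strict subsolution, $\Theta_\ve$ cannot attain its maximum at any time $t>T$ (neither at an interior time nor at $t=T'$), so the maximum occurs at $t=T$, where $\Theta_\ve=-\ve\psi\le 0$. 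Hence $\theta\le\ve\psi+(C_U+1)\ve(T'-T)$ pointwise, and letting $\ve\to 0$ with the point fixed gives $\theta\le 0$; applying the same to $-\theta$ gives $\theta\equiv 0$, i.e.\ $\tvarphi=\varphi$ on $[T,T']\times(Y\setminus\{y_0\})$. The hard part is precisely the non-compactness of $Y\setminus\{y_0\}$ together with the complete lack of a priori control on $\omega_{\tvarphi}$ near $y_0$; both are absorbed into the barrier $\psi$, whose Hessian is negative semidefinite near $y_0$ by the explicit local model of Section \ref{sectlocalmodel}, so that the elliptic term keeps the right sign irrespective of the unknown coefficients.
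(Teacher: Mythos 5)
Your argument is correct but takes a genuinely different route from the paper. The paper compares the smooth two--parameter family $\varphi_{s,\ve}$ (solutions of the $s$-perturbed flow, built in the estimates preceding the lemma) with $\tilde\varphi$ directly at the level of the nonlinear Monge--Amp\`ere equation, using the barrier $s^2\log|\sigma|^2_h$ whose strength is coupled to the perturbation parameter $s$; the $s$-term $s(\omega_Y - sR(h))$ on the right-hand side of (\ref{rhs}) supplies the strict positivity that lets the maximum principle close, and one then passes $s,\ve\to 0$. You instead linearize the difference of the two equations for $\varphi$ and $\tilde\varphi$ into a degenerate parabolic operator $\partial_t - a^{i\bar j}\partial_i\partial_{\bar j}$ with uncontrolled coefficients near $y_0$, and add an unbounded barrier $\ve\psi$ with $\psi=-\log|s|^2_h+\textrm{const}$, noting that $\ddbar\psi = R(h)\le 0$ near $E$ so that $a^{i\bar j}\partial_i\partial_{\bar j}\psi\le 0$ there regardless of the unknown coefficients, while away from $y_0$ one absorbs the fixed Hessian of $\psi$ into the constant $(C_U+1)\ve(t-T)$. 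This is more elementary: it sidesteps both the family $\varphi_{s,\ve}$ and the manipulations with the nonlinear equation, at the cost of relying on the sign of $R(h)$ near $E$ rather than only on the softer condition $\omega_Y-\delta R(h)>0$ used by the paper. One small point you should make explicit: the inequality $R(h)\le 0$ near $E$ is true only for the specific Hermitian metric $h$ agreeing near $E$ with the pull-back $h_{\textrm{FS}}^{-k}$ of Section~\ref{sectlocalmodel} (where $-R(h)=k\,\pi^*\omega_{\textrm{FS}}\ge 0$), so $h$ must be chosen to match the local model near $E$ rather than the more general $h$ appearing in the paper's proof. Both proofs also implicitly use that $\omega_{\tilde\varphi}>0$ on $[T,T']\times(Y\setminus\{y_0\})$, which follows from $\omega_{\tilde\varphi}^n>0$ (finiteness of $\partial_t\tilde\varphi$), positivity at $t=T$, and continuity; worth a sentence in either write-up.
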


The key point in this lemma is that \emph{a priori} $\tilde{\varphi}$ may be singular at $y_0$ for values of $t$ in $(T, T']$.  We follow a method similar to that in \cite{SoT3}.
Fix $\delta>0$ sufficiently small.  For $\ve \in (0,\delta)$ and $s \in (-\delta, \delta)$ we consider $\varphi_{s,\ve}$ a solution of the following parabolic Monge-Amp\`ere equation:
\begin{equation}
\ddt{\varphi_{s, \ve}} = \log \frac{ (\hat{\omega}_{Y,t}+s\omega_Y+ \ddbar \varphi_{s, \ve})^n }{\Oorb}, ~~~\varphi_{s,\ve} |_{t=T}=(1+s) \psi_{T, \ve},
\end{equation}
for $t \in [T,T'']$, some $T'' \in (T, T']$.   Write $\omega_{s, \ve}= \hat{\omega}_{Y,t}+s\omega_Y+ \ddbar \varphi_{s, \ve}$.
 The solution $\varphi_{s, \ve}$ is smooth for $\ve>0$ and we have uniform $C^{\infty}$ estimates (independent of $\ve$ and $s$) away from the point $(T,y_0)$.

\begin{lemma} There exists $C>0$ such that on $[T, T'']\times Y$,
\begin{enumerate}
\item[(i)] $|\varphi_{s, \ve}|\leq C.$
\item[(ii)] $\displaystyle{\left| \frac{\partial \varphi_{s, \ve}}{\partial t} \right| \leq C}.$
\item[(iii)] $\displaystyle{\left| \frac{\partial \varphi_{s, \ve}}{\partial s} \right| \leq C}.$
\end{enumerate}
\end{lemma}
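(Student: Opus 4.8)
The plan is to derive all three bounds from the parabolic maximum principle applied directly to the solution $\varphi_{s,\ve}$, which is smooth on $[T,T'']\times Y$ for $\ve>0$. Uniformity in $\ve$ and $s$ will come from using only the fixed data $\omega_Y$, $\Oorb$ and $\chi$, together with the bound $\|\psi_{T,\ve}\|_{L^\infty(Y)}\le C$ (uniform in $\ve$, since $\psi_{T,\ve}\to\psi_T$ in $L^\infty(Y)$ by Kolodziej stability), and from shrinking $T''-T$ so that, since $|\chi|\le C\omega_Y$ on $Y$ with $\chi$ vanishing near $y_0$, one has $\hat\omega_{Y,t}+s\omega_Y=(1+s)\omega_Y+(t-T)\chi\ge\tfrac12\omega_Y$ for $|s|$ small. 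For (i), I would first prove the upper bound: at a spatial maximum of $\varphi_{s,\ve}(\cdot,t)$ we have $\ddbar\varphi_{s,\ve}\le 0$, hence there $\frac{\partial}{\partial t}\varphi_{s,\ve}\le\log\bigl((\hat\omega_{Y,t}+s\omega_Y)^n/\Oorb\bigr)\le C_0$, because $\hat\omega_{Y,t}+s\omega_Y$ is a uniformly bounded nonnegative orbifold $(1,1)$-form and $\Oorb$ a fixed positive orbifold volume form; the maximum principle applied to $\varphi_{s,\ve}-C_0(t-T)$ then gives $\varphi_{s,\ve}\le(1+s)\psi_{T,\ve}+C_0(t-T)\le C$. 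The lower bound in (i) follows afterwards from (ii) by integrating in time, $\varphi_{s,\ve}(t)=(1+s)\psi_{T,\ve}+\int_T^t \frac{\partial}{\partial t}\varphi_{s,\ve}\,dt'$.

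For (ii) the first step is a uniform estimate $\omega_{s,\ve}\ge c\,\hat\omega_{Y,t}$ (equivalently $\omega_{s,\ve}\ge c'\omega_Y$). This I would prove exactly as in Lemma \ref{basicestimates}(ii), i.e. by the maximum principle for $\log\tr{\hat\omega_{Y,t}}{\omega_{s,\ve}}$ (corrected by a term $-A\varphi_{s,\ve}$ if needed): the point is that this quantity tends to $-\infty$ at $y_0$, where $\omega_{s,\ve}$ is a genuine metric for $\ve>0$ while $\hat\omega_{Y,t}$ degenerates, so its spatial maximum is attained in the region where $\hat\omega_{Y,t}$ has bounded geometry and the standard Aubin--Yau computation applies. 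Given this, differentiating the flow in $t$ yields $\left(\frac{\partial}{\partial t}-\Delta\right)\frac{\partial}{\partial t}\varphi_{s,\ve}=\tr{\omega_{s,\ve}}{\chi}$, and since $\chi$ vanishes near $y_0$ and $|\chi|\le C\omega_Y\le C'\omega_{s,\ve}$ globally, $|\tr{\omega_{s,\ve}}{\chi}|\le C$. Combining this with the identity $\left(\frac{\partial}{\partial t}-\Delta\right)\bigl((t-T)\tfrac{\partial}{\partial t}\varphi_{s,\ve}-\varphi_{s,\ve}\bigr)=n-(1+s)\tr{\omega_{s,\ve}}{\omega_Y}\le n$, with the bound on $\varphi_{s,\ve}$ from (i), and with the maximum principle, one obtains the two-sided bound on $\frac{\partial}{\partial t}\varphi_{s,\ve}$; the behaviour near $(T,y_0)$ is governed by the initial value $\frac{\partial}{\partial t}\varphi_{s,\ve}|_{t=T}=n\log(1+s)+\log C_\ve+\log(\Omega_\ve/\Oorb)$, which is finite for each $\ve>0$.

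Part (iii) is the cleanest. Differentiating the flow in $s$, with $\omega_{s,\ve}=\hat\omega_{Y,t}+s\omega_Y+\ddbar\varphi_{s,\ve}$, gives
\[
\left(\frac{\partial}{\partial t}-\Delta\right)\frac{\partial \varphi_{s,\ve}}{\partial s}=\tr{\omega_{s,\ve}}{\omega_Y},\qquad \frac{\partial \varphi_{s,\ve}}{\partial s}\Big|_{t=T}=\psi_{T,\ve}.
\]
Since $\tr{\omega_{s,\ve}}{\omega_Y}\ge 0$, the minimum principle immediately gives $\frac{\partial}{\partial s}\varphi_{s,\ve}\ge\min_Y\psi_{T,\ve}\ge -C$, uniformly in $\ve$ and $s$. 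For the upper bound, the estimate $\omega_{s,\ve}\ge c'\omega_Y$ from the previous paragraph gives $\tr{\omega_{s,\ve}}{\omega_Y}\le n/c'$, so the maximum principle yields $\frac{\partial}{\partial s}\varphi_{s,\ve}\le\max_Y\psi_{T,\ve}+(n/c')(t-T)\le C$.

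The main obstacle is the degeneracy of $\hat\omega_{Y,t}$ (and hence of $\omega_{s,\ve}|_{t=T}$) at the orbifold point $y_0$: the reference metric there has unbounded bisectional curvature, which is exactly why $\frac{\partial}{\partial t}\varphi_{s,\ve}$ may be large near $(T,y_0)$ and why a naive application of the maximum principle to the trace quantities is not available. As in the estimates near the exceptional divisor (Lemma \ref{lemmaest2}), this is handled by working in the uniformizing chart and exploiting that the relevant trace quantities blow up to $-\infty$ at $y_0$, so that their extrema are never attained there; the metric lower bound $\omega_{s,\ve}\ge c\,\hat\omega_{Y,t}$ is the linchpin for both (ii) and (iii).
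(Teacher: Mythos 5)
Your overall strategy is genuinely different from the paper's, and there are real gaps in it.

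The paper proves (ii) and (iii) by adding a term $\pm A\varphi_{s,\ve}$ to the quantity being estimated: since $\Delta_{\omega_{s,\ve}}\varphi_{s,\ve}=n-\tr{\omega_{s,\ve}}{(\hat\omega_{Y,t}+s\omega_Y)}$ and $\hat\omega_{Y,t}+s\omega_Y\ge c\,\omega_Y$ as a uniform \emph{form} inequality, the term $-A\Delta\varphi_{s,\ve}$ produces $+Ac\,\tr{\omega_{s,\ve}}{\omega_Y}$, which absorbs the bad traces $\tr{\omega_{s,\ve}}{\chi}$ and $\tr{\omega_{s,\ve}}{\omega_Y}$ without ever invoking a lower bound on the metric $\omega_{s,\ve}$. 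You instead try to first establish the metric lower bound $\omega_{s,\ve}\ge c\,\omega_Y$ and then bound the traces directly. That is a different route, and it is where the trouble lies.

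First, the uniformity claim. You announce at the outset that the bounds will be uniform in $\ve$ and $s$, yet you also (correctly) observe that $\partial_t\varphi_{s,\ve}\big|_{t=T}=n\log(1+s)+\log C_\ve+\log(\Omega_\ve/\Oorb)$, which near $y_0$ is $\sim\log\ve$ and tends to $-\infty$. So the bound in (ii) cannot be uniform in $\ve$, and hence neither can the upper bound in (iii), which consumes the lower bound on $\dot\varphi_{s,\ve}$. This is not a defect of the lemma: the lemma is applied for fixed $\ve$ to pass $s\to 0$, so $\ve$-dependence is harmless. But your framing is contradictory and suggests you have not tracked which constants depend on what; this would propagate into confusion downstream.

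Second, the metric lower bound. You propose to prove $\omega_{s,\ve}\ge c\,\omega_Y$ ``exactly as in Lemma \ref{basicestimates}(ii)'' by running a parabolic Schwarz-type maximum principle on $\log\tr{\hat\omega_{Y,t}}{\omega_{s,\ve}}$, arguing the quantity tends to $-\infty$ at $y_0$. Two problems: the trace written is the wrong one (that trace tends to $+\infty$ at $y_0$; you want $\tr{\omega_{s,\ve}}{\hat\omega_{Y,t}}$, or equivalently $\tr{\omega_{s,\ve}}{\omega_Y}$, whose logarithm goes to $-\infty$); and, more seriously, the parabolic Schwarz inequality $(\partial_t-\Delta)\log\tr{\omega}{\omega_Y}\le C\,\tr{\omega}{\omega_Y}$ requires an upper bound on the bisectional curvature of the \emph{fixed degenerate} metric $\omega_Y$, which blows up as one approaches $y_0$. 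Saying the maximum is ``never attained at $y_0$'' does not by itself give a $\ve$- or $s$-independent constant $C$, because the maximum point can drift toward $y_0$. For fixed $\ve$ the bound $\omega_{s,\ve}\ge c_\ve\,\omega_Y$ actually holds for a trivial reason --- $\omega_{s,\ve}$ is a smooth orbifold metric depending smoothly on $(s,t)$ in the compact parameter set $[-\delta,\delta]\times[T,T'']$, while $\omega_Y$ degenerates at $y_0$ --- but that is not the argument you gave, and it makes the Schwarz-lemma machinery superfluous. The paper's $\pm A\varphi_{s,\ve}$ trick avoids having to address any of this and yields a shorter, self-contained proof of the same three bounds.
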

\begin{proof} Part (i) follows easily from the maximum principle.  
For (ii), compute
\begin{equation}
\left( \ddt{} - \Delta_{\omega_{s, \ve}} \right) \frac{\partial \varphi_{s, \ve}}{\partial t} = \tr{\omega_{s,\ve}}{\chi} \ge - C \tr{\omega_{s, \ve}}{\omega_Y}.
\end{equation}
Then if $Q = \partial \varphi_{s, \ve}/\partial t + A \varphi_{s, \ve}$ with $A$ sufficiently large, we have
\begin{eqnarray} \nonumber
\left( \ddt{} - \Delta_{\omega_{s, \ve}} \right) Q & = & \tr{\omega_{s, \ve}}{\chi} + A \ddt{\varphi_{s,\ve}} - A \tr{\omega_{s,\ve}}{(\omega_{s,\ve} - \hat{\omega}_{Y,t} - s \omega_Y)} \\ 
& \ge & - C + AQ.
\end{eqnarray} 
By a maximum principle argument, $Q$ is bounded from below, giving a lower bound for $\partial \varphi_{s,\ve}/\partial t$.  The upper bound can be proved similarly.

For (iii), compute
\begin{equation}
\left( \ddt{} - \Delta_{\omega_{s, \ve}} \right) \frac{\partial \varphi_{s, \ve}}{\partial s} = \tr{\omega_{s,\ve}}{\omega_Y} \geq 0.
\end{equation}
The lower bound of $\partial \varphi_{s,\ve}/ \partial s$ follows immediately.  For the upper bound, it suffices to obtain an upper bound of the quantity
 \begin{equation}
 H= \frac{\partial \varphi_{s, \ve}}{\partial s} -A\varphi_{s,\ve},
 \end{equation}
for some constant $A$.  Choosing $A$ sufficiently large we have, using (ii),
\begin{eqnarray}
\left( \ddt{} - \Delta_{\omega_{s, \ve}} \right) H & = & \tr{\omega_{s, \ve}}{\omega_Y} - A \ddt{\varphi_{s,\ve}} + A \tr{\omega_{s,\ve}}{(\omega_{s,\ve} - \hat{\omega}_{Y,t} - s \omega_Y)} \le C, 
\end{eqnarray}
and the upper bound of $H$  follows. \qed
\end{proof}

It follows that $\varphi_{s, \ve}$ converges in $L^\infty(Y)$ to a function $\varphi_{0, \ve}$ as $s \rightarrow 0$.  Moreover, the convergence is smooth away from $(T, y_0)$ and so $\varphi_{0,\ve}$ solves (\ref{eqphie}) and hence must equal $\varphi_{\ve}$.

\begin{proof}[Proof of Lemma \ref{lemmau}]  It suffices to prove the result for the shorter time interval $[T,T'']$.  Let $\sigma$ be a defining section of the exceptional divisor $E$ and $h$ be a smooth Hermitian metric equipped on the associated line bundle such that $\omega_Y- \delta R(h)>0$ for sufficiently small $\delta>0$. We also assume that $|\sigma|^2_h\leq 1$.

First, restrict to $s>0$ and define a smooth function $u_{s, \ve}$ on $[T, T''] \times Y \setminus \{ y_0 \}$ by
\begin{equation}
u_{s, \ve} = \varphi_{s, \ve} - \tilde{\varphi} - s^2 \log |\sigma|^2_h.
\end{equation}  Observe that $u_{s, \ve}(y) \rightarrow \infty$ as $y \rightarrow y_0$.   Compute
\begin{eqnarray} \nonumber
\ddt{u_{s,\ve}} & = & \log \frac{(\hat{\omega}_{Y,t} + s \omega_Y + \ddbar \varphi_{s, \ve})^n}{(\hat{\omega}_{Y,t} + \ddbar \tilde{\varphi})^n} \\ \label{rhs}
& = & 
 \log \frac{( \hat{\omega}_{Y,t} + \ddbar \tilde{\varphi} + \ddbar u_{s,\ve} + s(\omega_Y - sR(h) )   )^n}{(\hat{\omega}_{Y,t} + \ddbar\tilde{\varphi})^n}.
\end{eqnarray}
If $u_{s, \ve}$ attains a minimum on $Y \setminus \{ y_0 \}$ then at that point the right hand side of (\ref{rhs}) is nonnegative.
Hence by the maximum principle,
\begin{equation}
u_{s,\ve} \geq \inf_Y u_{s, \ve}|_{t=T}   \ge \inf_Y \{(\psi_{T,\ve}-\psi_T)+ s\psi_{T,\ve}\}
\end{equation} and so
\begin{equation}
\varphi_{s,\ve} \geq \tilde{\varphi} + \inf_Y \{(\psi_{T,\ve}-\psi_T)+ s\psi_{T,\ve}\}+ s^2 \log |\sigma|^2_h.
\end{equation}
Letting $s, \ve \rightarrow 0$, we have $\varphi \geq \tilde{\varphi}.$

For the other inequality consider $s<0$ and define 
\begin{equation}
v_{s, \ve} = \varphi_{s, \ve} - \tilde{\varphi} + s^2 \log |\sigma|^2_h,
\end{equation}
  which has $v_{s, \ve} (y) \rightarrow -\infty$ as $y \rightarrow y_0$.  Then
\begin{equation}
\ddt{v_{s,\ve}} = \log \frac{( \hat{\omega}_{Y,t} + \ddbar \tilde{\varphi} + \ddbar v_{s,\ve} - (-s)(\omega_Y - (-s) R(h) )   )^n}{(\hat{\omega}_{Y,t} + \ddbar\tilde{\varphi})^n}.
\end{equation}
Arguing again using the maximum principle, we have
\begin{equation}
v_{s,\ve} \leq\sup_Y v_{s, \ve}|_{t=T}= \sup_Y \{(\psi_{T,\ve}-\psi_T)+ s\psi_{T,\ve}\} \end{equation} 
and so
\begin{equation} 
\varphi_{s,\ve} \leq \tilde{\varphi} + \sup_Y \{(\psi_{T,\ve}-\psi_T)+ s\varphi_{T,\ve}\}  - s^2 \log |\sigma|_h^2.
\end{equation}
By letting $s, \ve \rightarrow 0$, we have $\varphi\leq \tilde{\varphi},$
and this completes the proof of the lemma.  \qed
\end{proof}

\pagebreak[3]
\section{Estimates near the orbifold point after time $T$} \label{sectafter}

In this section we prove (v) of Theorem \ref{mainthm} by constructing a solution of the K\"ahler-Ricci flow on $Y$ by a different method.  
We do this using a family of flows on the original manifold $X$.   For simplicity we again assume that $\pi: X \rightarrow Y$ contracts only a single  exceptional divisor $E$ on $X$ to a point $y_0$.   For the sake of readability we include in this section some arguments which are similar to those given in Section 6 of \cite{SW2}.

As in Section \ref{sectiv}, we consider the reference forms $\hat{\omega}_{Y,t}= \omega_Y + (t-T) \chi$ for $t \in [T, T']$ given by (\ref{hatotY}), where $\chi = \ddbar \log \Oorb$ vanishes on $B$.  
  Define a smooth function $\gamma$ on $Y$ such that 
\begin{equation}
\gamma(z) = r^{2(n-k)} \ \ \textrm{for } z \in B,
\end{equation}
for $r^2 = |z^1|^2 + \cdots  + |z^n|^2$, with $\gamma$ positive outside $B$.
Then, using the definition of $\pi$, one can see that there exists a smooth volume form $\Omega_X$ on $X$ with
\begin{equation}
\pi^* \Oorb = (\pi^* \gamma) \Omega_X.
\end{equation}
Note that $\pi^* \gamma$ is not smooth along $E$ in general.

In this section we will prove the existence of an \emph{a priori} different and possibly non-smooth orbifold solution $\tilde{\varphi}$ of the parabolic complex Monge-Amp\`ere equation
\begin{equation} \label{maY}
\ddt{\tilde{\varphi}} = \log \frac{ ( \hat{\omega}_{Y,t} + \ddbar \tilde{\varphi} )^n }{ \Oorb}, ~~~\tilde{\varphi} |_{t=T}= \psi_T,
\end{equation}
on $[T,T'] \times (Y \setminus \{ y_0 \})$, by considering a family of Monge-Amp\`ere flows on $X$.  We want $\tilde{\varphi}= \tilde{\varphi}(t,y)$ to be in  $L^{\infty}([T,T'] \times Y)$ and smooth on $[T,T'] \times (Y \setminus \{ y_0 \})$.  Note that, at this stage, we do not require the solution to be smooth in a neighborhood of $y_0$ for $t > T$.


Let $\gamma_{\ve}$ be a family of smooth functions on $Y$ converging smoothly to $\gamma$ and satisfying
\begin{equation}
\gamma_\ve(z) = (\ve + r^{2k})^{(n-k)/k} \ \ \textrm{for } z \in B.
\end{equation}
By the discussion in Section \ref{sectlocalmodel}, $\pi^*\gamma_\ve$ is smooth on $X$ for $\ve >0$.



Note that if $\ve>0$ is sufficiently small then $\hat{\omega}_{Y,t} - \frac{\ve}{T} \omega_Y$ is nonnegative on  $Y$ and positive definite on $Y \setminus \{y_0\}$, for $t \in [T, T']$.  We consider for $\ve \in (0, \ve_0)$, with $\ve_0>0$ sufficiently small,
 the following family of  Monge-Amp\`ere flows on $X \times [T, T']$:
\begin{equation} \label{maX1}
\ddt{\varphi_{\ve}} = \log \frac{(  \pi^* (\hat{\omega}_{Y,t}- \frac{\ve}{T} \omega_Y) + \frac{\ve}{T} \omega_0  + \ddbar \varphi_{\ve})^n}{(\pi^*\gamma_{\ve}) \Omega_X }, ~~~ \varphi_{\ve} |_{t=T} = \varphi(T-\ve).
\end{equation}
Observe that at $t=T$, the (1,1) form $\pi^* (\hat{\omega}_{Y,t}- \frac{\ve}{T} \omega_Y) + \frac{\ve}{T} \omega_0$ is equal to $\hat{\omega}_{T-\ve}$.
Also,  if $\ve \rightarrow 0$ then we have pointwise convergence
\begin{equation}
(\pi^*\gamma_{\ve}) \Omega_X \rightarrow \pi^*\Oorb, \quad
\pi^* (\hat{\omega}_{Y,t}- \frac{\ve}{T} \omega_Y) + \frac{\ve}{T} \omega_0 \longrightarrow \pi^*  \hat{\omega}_{Y,t}, \quad \textrm{and } \varphi(T- \ve) \rightarrow \pi^* \psi_T.
\end{equation}
Hence if we have $C^{\infty}$ estimates of $\varphi_{\ve}$ on compact subsets of  $[T,T'] \times (X \setminus E)$, then the functions $(\pi|_{X\setminus E}^{-1})^* \varphi_{\ve}$ converge to a solution $\tilde{\varphi}$ of (\ref{maY}) on $[T, T'] \times Y\setminus \{y_0\}$.

We now derive uniform estimates for solutions $\varphi_{\ve}$ of (\ref{maX1}).

\begin{lemma}There exists $C>0$ such that for all $\ve \in(0,\ve_0)$
\begin{equation}
|\varphi_{\ve}| \leq C,
\end{equation}
on $[T, T']\times X$,

\end{lemma}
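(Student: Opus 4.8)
The plan is to establish the upper and lower bounds for $\varphi_\ve$ separately, each by a parabolic maximum principle applied not to $\varphi_\ve$ directly but to the difference of $\varphi_\ve$ with a bounded solution of an auxiliary (possibly degenerate) complex Monge--Amp\`ere \emph{equation} on $X$. The obstacle is that the reference forms $\hat\omega_{Y,t,\ve}:=\pi^*(\hat\omega_{Y,t}-\tfrac{\ve}{T}\omega_Y)+\tfrac{\ve}{T}\omega_0$ collapse at $y_0$ as $\ve\to0$, and the density $(\pi^*\gamma_\ve)\Omega_X$ degenerates near $E$, so a naive maximum principle applied to $\varphi_\ve$ only yields an $\ve$-dependent bound. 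What rescues the argument is that, because $k<n$, the function $\gamma_\ve$ is bounded \emph{above} uniformly in $\ve$ (on $B$ it equals $(\ve+r^{2k})^{(n-k)/k}\le C$), so $(\pi^*\gamma_\ve)\Omega_X\le C\,\omega_0^n$ with $C$ independent of $\ve$, while $\int_X(\pi^*\gamma_\ve)\Omega_X\to\int_Y\Oorb>0$.

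For the upper bound, fix the K\"ahler metric $\Theta:=C'\omega_0$ on $X$ with $C'$ large enough that $\Theta\ge\hat\omega_{Y,t,\ve}$ for all $t\in[T,T']$ and $\ve\in(0,\ve_0)$ (possible since $\pi^*\omega_Y$ and $\pi^*\chi$ are smooth and bounded with respect to $\omega_0$), and let $\phi_\ve$ be the unique solution, normalized by $\sup_X\phi_\ve=0$, of $(\Theta+\ddbar\phi_\ve)^n=a_\ve(\pi^*\gamma_\ve)\Omega_X$, where $a_\ve>0$ makes the two sides have equal total mass. Since the density $a_\ve(\pi^*\gamma_\ve)\Omega_X/\omega_0^n$ is bounded uniformly in $\ve$ (hence uniformly in every $L^p$) and $a_\ve$ stays between two positive constants, Kolodziej's estimate \cite{Kol1} gives $\|\phi_\ve\|_{L^\infty}\le C$ uniformly. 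Now $\omega_\ve:=\hat\omega_{Y,t,\ve}+\ddbar\varphi_\ve$ satisfies $\partial_t\varphi_\ve=\log\big(\omega_\ve^n/(\Theta+\ddbar\phi_\ve)^n\big)+\log a_\ve$, and writing $u_\ve=\varphi_\ve-\phi_\ve$ and using $\hat\omega_{Y,t,\ve}\le\Theta$ one has $0<\omega_\ve\le\Theta+\ddbar\phi_\ve+\ddbar u_\ve$, so at a point where $u_\ve(\cdot,t)$ attains its spatial maximum (where $\ddbar u_\ve\le0$) one gets $\partial_t u_\ve=\partial_t\varphi_\ve\le\log a_\ve\le C$. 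Integrating along the flow and using that $\varphi_\ve|_{t=T}=\varphi(T-\ve)$ is uniformly bounded (Lemma \ref{basicestimates}(i)) yields $\varphi_\ve\le\phi_\ve+C\le C$.

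The lower bound is parallel, but now one compares with a \emph{degenerate} Monge--Amp\`ere equation. Take the nef and big form $\theta:=\tfrac12\pi^*\omega_Y$ (shrinking $T'-T$ and $\ve_0$ as needed, using $\chi\equiv0$ near $y_0$ and that $\omega_Y$ is a genuine metric elsewhere, so that $\hat\omega_{Y,t,\ve}\ge\theta$ for all relevant $t,\ve$), note $[\theta]^n=(\tfrac12)^n[\omega_Y]^n>0$, and let $\phi'_\ve$ be the bounded solution of $(\theta+\ddbar\phi'_\ve)^n=b_\ve(\pi^*\gamma_\ve)\Omega_X$ provided by \cite{Zha1, EGZ}, which is uniformly bounded for the same reason as above. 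With $v_\ve=\varphi_\ve-\phi'_\ve$ one has, from $\hat\omega_{Y,t,\ve}\ge\theta$, that $\omega_\ve\ge\theta+\ddbar\phi'_\ve+\ddbar v_\ve$, so at a spatial minimum of $v_\ve$ one gets $\partial_t\varphi_\ve\ge\log b_\ve\ge-C$, and the minimum principle together with the uniform bound on $\varphi(T-\ve)$ gives $\varphi_\ve\ge\phi'_\ve-C\ge-C$. The main obstacle is precisely the $\ve\to0$ degeneration just described, which is why the proof must invoke the uniform $L^\infty$ estimate for (degenerate) complex Monge--Amp\`ere equations with $L^p$ densities; the one remaining technical point is that $\phi_\ve$ and $\phi'_\ve$ are only known to be bounded, so the pointwise maximum-principle step should really be run through the Bedford--Taylor comparison principle (or through a decreasing smooth approximation of these barriers), exactly as in Section~6 of \cite{SW2} following \cite{SoT3}.
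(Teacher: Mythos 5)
Your approach is essentially the same as the paper's: solve an auxiliary elliptic complex Monge--Amp\`ere equation whose solution is uniformly bounded in $L^\infty$ via the Kolodziej--Zhang--EGZ estimates (noting that $\pi^*\gamma_\ve$ is uniformly bounded above because $k<n$), and then compare with $\varphi_\ve$ through the parabolic maximum principle. The one place where the paper's execution is cleaner than yours is the lower bound: the paper's barrier $\rho_\ve$ solves $(\alpha\pi^*\omega_Y+\tfrac{\ve}{T}\omega_0+\ddbar\rho_\ve)^n=C_\ve(\pi^*\gamma_\ve)\Omega_X$ with the reference form $\alpha\pi^*\omega_Y+\tfrac{\ve}{T}\omega_0$, which is genuinely K\"ahler for each $\ve>0$, so $\rho_\ve$ is smooth and the pointwise maximum principle applies to $v_\ve=\varphi_\ve-\rho_\ve$ with no further ado. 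Your choice of the degenerate nef-and-big form $\theta=\tfrac12\pi^*\omega_Y$ (with no $\ve\omega_0$ regularization) gives a $\phi'_\ve$ that is only known to be bounded and may fail to be $C^2$ along $E$, so the minimum-principle step must be routed through a comparison principle or a regularization of the barrier --- exactly the technical point you flag at the end. Retaining the $\tfrac{\ve}{T}\omega_0$ summand in the lower-bound reference form removes this wrinkle; otherwise the proposal is correct.
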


\begin{proof}
Choose $\alpha, \beta>0$ such that for all $t\in [T, T']$, 
\begin{equation}
\alpha \omega_Y \leq \hat{\omega}_{Y,t} - \frac{\ve}{T} \omega_Y \leq \beta \omega_Y.
\end{equation}
For the lower bound of $\varphi_{\ve}$, let $\rho_{\ve}$ be the smooth solution of the following complex Monge-Amp\`ere equation on $X$:
\begin{equation}
(\alpha \pi^* \omega_Y + \frac{\ve}{T} \omega_0  + \ddbar \rho_\ve)^n = C_{\ve} (\pi^*\gamma_{\ve}) \Omega_X, \quad \sup_X \rho_\ve =0,
\end{equation}
where $C_{\ve}$ is a constant chosen so that the integrals of the left and right sides match.  Observe that $C_{\ve}>0$ is uniformly bounded from above and below away from zero.
Applying the result of \cite{Zha1, EGZ}   we see that $| \rho_{\ve}|$ is uniformly bounded for all $\ve \in (0, \ve_0)$.

Let $v_\ve= \varphi_{\ve} - \rho_\ve$.  Then
\begin{equation}
 \ddt{v_\ve} = \log \frac{ C_{\ve}( \alpha \pi^*\omega_Y + \frac{\ve}{T} \omega_0 + \ddbar\rho_\ve + \ddbar v_\ve + \pi^*(\hat{\omega}_{Y,t} - \frac{\ve}{T} \omega_Y-\alpha \omega_Y))^n}      {(\alpha \pi^*\omega_Y  + \frac{\ve}{T} \omega_0 + \ddbar\rho_\ve)^n}.
\end{equation}
Applying the maximum principle we see that $v_{\ve}$ is uniformly bounded from below, independent of $\ve$.  This gives the lower bound for $\varphi_{\ve}$.
For the upper bound of $\varphi_{\ve}$ we argue similarly.  
\qed
\end{proof}

Define K\"ahler metrics $\omega_{\ve}$ on $X\times [T,T']$ by
\begin{equation}
\omega_{\ve} =  \pi^* (\hat{\omega}_{Y,t}- \frac{\ve}{T} \omega_Y) + \frac{\ve}{T} \omega_0  + \ddbar \varphi_{\ve}.
\end{equation}
Observe that 
\begin{equation}
\ddt{} \omega_{\ve}  = - \textrm{Ric}(\omega_{\ve}) - \ddbar \log ( (\pi^* \gamma_{\ve}) \Omega_X) + \pi^* \chi.
\end{equation}

We have a lemma.

\pagebreak[3]
\begin{lemma} \label{lemmaf} There exists $C>0$, such that for all $\ve\in (0,\ve_0)$
\begin{enumerate}
\item[(i)] $\displaystyle{\ddbar \log \gamma_\ve \ge - C \omega_Y}$ on $Y$.
\item[(ii)] $\displaystyle{\ddbar \log \gamma_\ve > 0}$ on  $B$.
\item[(iii)] In $\pi^{-1} (B \setminus \{0 \})$ we have
\begin{align} 
\ddt{} \omega_{\ve} & \le - \emph{Ric}(\omega_{\ve}) + (n-k)  \ddbar \log r^2,
\end{align}
where we note that $\ddbar \log r^2 =  \frac{\sqrt{-1}}{2\pi} \frac{1}{r^2} \sum_{i,j} \left( \delta_{ij} - \frac{\overline{z^i} z^j}{r^2} \right) dz^i \wedge d\overline{z^j}$.

\end{enumerate}
\end{lemma}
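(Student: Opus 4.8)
The three parts of the lemma are essentially local computations, and part (ii) carries all the real content: once it is in hand, (i) follows by combining the explicit formula on $B$ with elementary uniform bounds away from $y_0$, and (iii) follows by feeding (ii) into the evolution equation for $\omega_{\ve}$ recorded immediately before the lemma. So the plan is to prove (ii) first, then deduce (i) and (iii).

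For (ii): on $B$ we have $\gamma_{\ve} = (\ve + r^{2k})^{(n-k)/k}$, so $\ddbar \log \gamma_{\ve} = \frac{n-k}{k}\,\ddbar \log(\ve + r^{2k})$, and since $n > k$ (by the Adjunction Formula remark) it suffices to show $\ddbar \log(\ve + r^{2k}) \ge 0$ on $B$, with strict positivity away from the origin. The clean way to see nonnegativity is the multinomial identity
\[
\ve + r^{2k} = \ve + \sum_{|\alpha| = k} \tbinom{k}{\alpha}\, |z^{\alpha}|^2 ,
\qquad z^{\alpha} = (z^1)^{\alpha_1} \cdots (z^n)^{\alpha_n},
\]
which exhibits $\ve + r^{2k}$ as a finite sum of squared moduli of holomorphic functions (the constant $\sqrt{\ve}$ together with the monomials $\sqrt{\binom{k}{\alpha}}\, z^{\alpha}$); hence its logarithm is plurisubharmonic and $\ddbar \log(\ve + r^{2k}) \ge 0$. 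For the strict inequality on $B \setminus \{0\}$ I would differentiate directly: a routine calculation gives
\[
\partial_i \partial_{\bar\jmath} \log(\ve + r^{2k})
= \frac{k\, r^{2(k-1)}}{\ve + r^{2k}}\, \delta_{ij}
+ \Big( \frac{k(k-1)\, r^{2(k-2)}}{\ve + r^{2k}} - \frac{k^2\, r^{4(k-1)}}{(\ve + r^{2k})^2} \Big)\, \overline{z^i}\, z^j ,
\]
and testing against a vector orthogonal to $z$ (giving $k r^{2(k-1)}/(\ve + r^{2k}) > 0$) and against one parallel to $z$ (giving $k^2 \ve\, r^{2(k-1)}/(\ve + r^{2k})^2 > 0$) shows this is positive definite for $r > 0$. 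At the origin the form vanishes when $k \ge 2$, which is harmless for the applications. Letting $\ve \to 0$ in the same computation also recovers the quoted formula for $\ddbar \log r^2$.

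For (i): fix a small ball $B' \subset B$ containing $y_0$. On $B'$ the formula above gives $\ddbar \log \gamma_{\ve} = \frac{n-k}{k}\,\ddbar \log(\ve + r^{2k}) \ge 0 \ge -C\omega_Y$, using only $\omega_Y \ge 0$. On the compact set $Y \setminus B'$ the functions $\gamma_{\ve}$ converge in $C^{\infty}$ to $\gamma$, which is smooth and strictly positive there, so $\| \log \gamma_{\ve} \|_{C^2(Y \setminus B')}$ is bounded uniformly in $\ve$; since $\omega_Y$ is a smooth positive form bounded below on $Y \setminus B'$, we get $\ddbar \log \gamma_{\ve} \ge -C \omega_Y$ there as well. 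Taking $C$ to be the larger of the two constants proves (i).

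For (iii): on $\pi^{-1}(B \setminus \{0\})$, which $\pi$ identifies biholomorphically with $B \setminus \{0\}$, we have $\chi = \ddbar \log \Oorb = 0$, and from $\pi^* \Oorb = (\pi^* \gamma)\Omega_X$ with $\gamma = r^{2(n-k)}$ we get $\Omega_X = \pi^*(\Oorb/\gamma)$. Hence
\[
\ddbar \log\big( (\pi^*\gamma_{\ve})\Omega_X \big)
= \ddbar \log \frac{\gamma_{\ve}}{\gamma} + \ddbar \log \Oorb
= \frac{n-k}{k}\, \ddbar \log(\ve + r^{2k}) - (n-k)\, \ddbar \log r^2 .
\]
Substituting this into $\ddt{}\omega_{\ve} = -\ric(\omega_{\ve}) - \ddbar \log\big((\pi^*\gamma_{\ve})\Omega_X\big) + \pi^* \chi$ and discarding the nonnegative term $\frac{n-k}{k}\,\ddbar \log(\ve + r^{2k})$ (nonnegative by (ii), and $n > k$) yields $\ddt{}\omega_{\ve} \le -\ric(\omega_{\ve}) + (n-k)\,\ddbar \log r^2$, as desired. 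I expect no genuine obstacle here: the only points needing care are the plurisubharmonicity claim in (ii), which the sum-of-squares rewriting settles without wrestling with the Hessian, and the correct bookkeeping of the non-smooth weight $\pi^*\gamma$ in the volume form, together with the fact that $\chi$ was arranged in Section \ref{sectiv} to vanish on $B$.
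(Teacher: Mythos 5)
Your proof is correct and follows the same overall structure as the paper's: establish (ii) by direct computation on $B$, deduce (i) from (ii) plus uniform bounds away from $y_0$, and obtain (iii) by feeding the positivity from (ii) into the evolution equation for $\omega_\ve$. The only place where you deviate is in the argument for (ii). The paper exploits radial symmetry: setting $\rho = \log r^2$ and $u(\rho) = \log(\ve + e^{k\rho})$, it reduces plurisubharmonicity (and strict positivity away from the origin) to the elementary inequalities $u'>0$ and $u''>0$, which follow by direct differentiation of the one-variable profile. You instead invoke the multinomial identity $\ve + r^{2k} = \ve + \sum_{|\alpha|=k}\binom{k}{\alpha}|z^\alpha|^2$ to exhibit $\ve + r^{2k}$ as a sum of squared moduli of holomorphic functions, giving nonnegativity for free, and then compute the full complex Hessian and test it on radial and tangential directions to obtain strictness. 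Both routes are correct; the paper's is a quicker reduction to a one-variable convexity check while yours gives the plurisubharmonicity conceptually (via a sum-of-squares decomposition) at the cost of an extra Hessian computation. The computation in (iii) — identifying $\Omega_X$ with $\pi^*(\Oorb/\gamma)$ away from $E$, using that $\chi$ vanishes on $B$, and discarding the nonnegative term $\tfrac{n-k}{k}\ddbar\log(\ve + r^{2k})$ — matches the paper's derivation.
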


\begin{proof} Since (i) follows from (ii), we first prove (ii).  Observe that
\begin{equation}
\ddbar \log \gamma_\ve = \frac{(n-k)}{k} \ddbar \log ( \ve + (|z^1|^2 + \cdots + |z^n|^2)^{k}).
\end{equation}
Writing $u=u(\rho) = \log (\ve + e^{k \rho})$, where $\rho = \log r^2$, it suffices to show that $u'>0$, $u''>0$.  Compute
\begin{equation}
u' = \frac{k e^{k \rho}}{\ve + e^{k\rho}} >0, \quad u'' = \frac{\ve k^2 e^{k{\rho}}}{(\ve + e^{k\rho})^2}>0,
\end{equation}
as required.  

In $\pi^{-1} (B \setminus \{0 \})$, using (ii), we have
\begin{eqnarray} \nonumber
- \ddbar \log ( (\pi^* \gamma_{\ve}) \Omega_X) + \pi^* \chi 
& =&  - \ddbar \log ( \pi^* \gamma_{\ve}) + \ddbar \log \frac{\pi^* \Oorb}{\Omega_X} \\
& \le &   \ddbar \log \pi^* r^{2(n-k)},
\end{eqnarray}
and (iii) follows.
\qed
\end{proof}





Next we prove the following estimates:

\begin{lemma} \label{lemmavole}
There exists $C>0$ and $\alpha>0$ such that for all $\ve\in(0,\ve_0)$ and on $[T, T']\times X$,
\begin{enumerate}
\item[(i)] $\displaystyle{\frac{\omega_{\ve}^n}{ \Omega_X}\leq C}.$
\item[(ii)] $\displaystyle{\omega_{\ve} \le \frac{C}{|s|^{2\alpha}_h} \pi^*\omega_Y}$.
\item[(iii)] $\displaystyle{\omega_{\ve}^n \ge \frac{1}{C} |s|^{2(n-k)/k}_h \omega_0^n}$.
\end{enumerate}
\end{lemma}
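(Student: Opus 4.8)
The plan is to obtain all three estimates by the maximum principle applied to the twisted flow (\ref{maX1}), keeping careful track of the behaviour at the initial time $t=T$ and along the exceptional divisor $E$. I write $\theta_\ve(t)=\pi^*(\hat\omega_{Y,t}-\frac{\ve}{T}\omega_Y)+\frac{\ve}{T}\omega_0$ for the reference forms, so that $\frac{\partial}{\partial t}\theta_\ve=\pi^*\chi$ and, as in the proof of the previous lemma, $\theta_\ve\ge\alpha\pi^*\omega_Y$ uniformly in $\ve$; and I use throughout the lower bound $\omega_\ve\ge c\,\pi^*\omega_Y$ on $[T,T']\times X$, obtained exactly as in Lemma \ref{basicestimates}(ii) by applying the parabolic Schwarz lemma to $\log\tr{\omega_\ve}{\pi^*\omega_Y}$ (which tends to $-\infty$ along $E$, so its maximum lies in $X\setminus E$). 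For (i), the point is to work with $F_\ve:=\log(\omega_\ve^n/\Omega_X)=\dot\varphi_\ve+\log\pi^*\gamma_\ve$ rather than with $\dot\varphi_\ve$ itself: since $\ve>0$ this is a smooth function on all of $X$, and at $t=T$ it equals $\log(\omega(T-\ve)^n/\Omega_X)$, which is bounded above uniformly in $\ve$ by Lemma \ref{basicestimates}(i) and the comparability of the fixed volume forms $\Omega$ and $\Omega_X$. Differentiating the flow gives $(\frac{\partial}{\partial t}-\Delta_{\omega_\ve})\dot\varphi_\ve=\tr{\omega_\ve}{\pi^*\chi}$, hence
\[
\left(\frac{\partial}{\partial t}-\Delta_{\omega_\ve}\right)F_\ve=\tr{\omega_\ve}{\big(\pi^*\chi-\ddbar\log\pi^*\gamma_\ve\big)}.
\]
By Lemma \ref{lemmaf}(i), $-\ddbar\log\pi^*\gamma_\ve\le C\pi^*\omega_Y$; and $\pi^*\chi\le C\pi^*\omega_Y$ since $\chi$ vanishes in $B$; together with $\omega_\ve\ge\alpha\pi^*\omega_Y$ this yields $(\frac{\partial}{\partial t}-\Delta_{\omega_\ve})F_\ve\le Cn/\alpha$, and the maximum principle gives $F_\ve\le C$, which is (i).

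For (iii), I would reduce the claim as follows: since $\pi^*\gamma_\ve=(\ve+|s|_h^2)^{(n-k)/k}\ge|s|_h^{2(n-k)/k}$ near $E$ (and $\ge c>0$ away from $E$) and $\Omega_X$ is comparable to $\omega_0^n$, it suffices to show that $G_\ve:=\log\big(\omega_\ve^n/(|s|_h^{2(n-k)/k}\Omega_X)\big)=F_\ve-\frac{n-k}{k}\log|s|_h^2$ is bounded below, uniformly in $\ve$, on $[T,T']\times(X\setminus E)$. For each fixed $\ve>0$, $\omega_\ve$ is a genuine metric on $X$ while $|s|_h^{2(n-k)/k}$ vanishes along $E$, so $G_\ve\to+\infty$ near $E$, and hence $\inf G_\ve$ is attained at $t=T$ or at an interior point away from $E$. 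At $t=T$, $G_\ve|_{t=T}$ equals $\log(\omega(T-\ve)^n/\pi^*\Oorb)$ near $E$, which is bounded below by the volume lower bound $\omega(t)^n\ge c\,|s|_h^{2(n-k)/k}\omega_0^n$ for the original flow on $X$ (established as in \cite{SW2}, with the trivial initial datum $\omega_0^n/(|s|_h^{2(n-k)/k}\omega_0^n)=|s|_h^{-2(n-k)/k}\ge1$ since $|s|_h^2\le1$). At an interior minimum one computes, using $\ddbar\log|s|_h^2=-R(h)$ away from $E$,
\[
\left(\frac{\partial}{\partial t}-\Delta_{\omega_\ve}\right)G_\ve=\tr{\omega_\ve}{\Big(\pi^*\chi-\ddbar\log\pi^*\gamma_\ve+\tfrac{n-k}{k}\ddbar\log|s|_h^2\Big)},
\]
and on $\pi^{-1}(B)$ the last two terms combine to $-\frac{n-k}{k}\ddbar\log(1+\ve|s|_h^{-2})$, which one controls from below using the monotonicity and convexity of $\rho\mapsto\log(\ve+e^{k\rho})$ established in the proof of Lemma \ref{lemmaf} together with Lemma \ref{lemmahato} — if necessary after adding a suitable multiple of $\varphi_\ve$, as in (ii) below. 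The minimum principle then gives $G_\ve\ge-C$.

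For (ii), the genuine second-order estimate, I would follow the scheme of Lemma \ref{lemmaest2}(i) — equivalently Corollary 4.1 of \cite{SW2} — adapted to the flow (\ref{maX1}): working in a uniformizing chart near $E$, apply the maximum principle to a quantity of the form $\log\tr{\omega_0}{\omega_\ve}+A\log\big(|s|_h^{2(1+\sigma)/k}\tr{\oeuc}{\omega_\ve}\big)-A^2\varphi_\ve$, using the Schwarz-lemma inequality (\ref{tr1}) for the first two terms, the boundedness of $\ddbar\log\Omega_X$ and of $\pi^*\chi$, and the fact that the weight $|s|_h^{2(1+\sigma)/k}$ forces the supremum to lie away from $E$; at that point the volume bound (i) together with $|\varphi_\ve|\le C$ close the estimate. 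The initial datum is dealt with by Lemma \ref{lemmaest2}(i)--(ii) and the comparison $|s|_h^{2\beta}\omega_0\le C\pi^*\omega_Y$ near $E$ (a consequence of Lemma \ref{lemmahato} and the bound $\omega_Y\ge\eta\,\osm$ near $y_0$ from Lemma \ref{lemmaoy}), which gives $\omega(T-\ve)\le C|s|_h^{-2\alpha'}\pi^*\omega_Y$. This produces $\omega_\ve\le C|s|_h^{-2\alpha}\pi^*\omega_Y$ with $\alpha,C$ independent of $\ve$.

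The main obstacle will be (iii), and the part of (ii) near $E$: because both $\pi^*\omega_Y$ and the weights $|s|_h^{2(n-k)/k}$ degenerate along $E$ at precisely the rate dictated by the geometry, no single auxiliary quantity — in particular not $\dot\varphi_\ve$ on its own — is bounded with $\ve$-independent constants; the correction terms $\log\pi^*\gamma_\ve$, $\log|s|_h^2$, $\varphi_\ve$ and an appropriate power of $|s|_h$ must be combined so that, at an extremum necessarily located away from $E$, the twisting/curvature error coming from $-\ddbar\log((\pi^*\gamma_\ve)\Omega_X)+\pi^*\chi$ (controlled by Lemmas \ref{lemmaf} and \ref{lemmahato}) carries the right sign, while the value at $t=T$ is supplied by the corresponding sharp estimate for the original flow on $X$.
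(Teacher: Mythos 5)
Your plan correctly identifies the key mechanism (maximum principle applied to the twisted flow \eqref{maX1}, keeping track of the behaviour at $t=T$ and along $E$), and (ii) is sketched essentially as in the paper. However, the way you propose to handle (i) and (iii) has a genuine gap.

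\textbf{The lower bound $\omega_\ve \ge c\,\pi^*\omega_Y$ is not obtained ``exactly as in Lemma \ref{basicestimates}(ii)''.} That lemma concerns the unperturbed K\"ahler--Ricci flow, where the twist is zero. Here $\partial_t\omega_\ve = -\ric(\omega_\ve) + \eta_\ve$ with $\eta_\ve = -\ddbar\log((\pi^*\gamma_\ve)\Omega_X) + \pi^*\chi$, and the twisted parabolic Schwarz lemma for $\log\tr{\omega_\ve}{\pi^*\omega_Y}$ produces a term involving $-g_\ve^{i\bar q}(\eta_\ve)_{p\bar q}g_\ve^{p\bar j}(\pi^*\omega_Y)_{i\bar j}$, which needs $\eta_\ve$ bounded below (uniformly in $\ve$) against $\pi^*\omega_Y$ to close. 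This fails: writing $u(\rho)=\log(\ve+e^{k\rho})$ with $\rho=\log r^2$, the radial part of $\ddbar\log\gamma_\ve$ is proportional to $u''(\rho)/r^2$, and $u''$ attains $k^2/4$ at $r\sim\ve^{1/2k}$, so $\ddbar\log\gamma_\ve$ has a radial component of size $\sim\ve^{-1/k}$ there, while $\omega_Y$ behaves radially like $r^{2(k-1)}\sim\ve^{(k-1)/k}$ (Lemma \ref{lemmahato}). Lemma \ref{lemmaf}(i) gives only the \emph{lower} bound $\ddbar\log\gamma_\ve\ge -C\omega_Y$, which is useless here. So the uniform lower bound $\omega_\ve\ge c\,\pi^*\omega_Y$ is not established, and since your argument for (i) (and hence for (ii), which quotes (i)) runs through it, the chain does not close. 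The paper's proof of (i) avoids this entirely: it studies $H=\log(\omega_\ve^n/\Omega_X)-A\varphi_\ve$, and uses $A\Delta_\ve\varphi_\ve \le An - A\tr{\omega_\ve}{\theta_\ve(t)}\le An - A\alpha\tr{\omega_\ve}{\pi^*\omega_Y}$ to generate the negative trace term that absorbs the $C\tr{\omega_\ve}{\pi^*\omega_Y}$ coming from $\pi^*(\chi-\ddbar\log\gamma_\ve)$, together with the volume-form term $-A\log(\omega_\ve^n/\Omega_X)$ to close via $\le C-AH$.

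\textbf{Your reduction for (iii) introduces a sign problem.} You propose to bound $G_\ve=F_\ve-\frac{n-k}{k}\log|s|_h^2 = \dot\varphi_\ve + \frac{n-k}{k}\log(1+\ve/|s|_h^2)$ from below. At an interior minimum the evolution term is $\tr{\omega_\ve}{(\pi^*\chi-\frac{n-k}{k}\ddbar\log(1+\ve/|s|_h^2))}$, and writing $v(\rho)=\log(1+\ve e^{-\rho})$ with $\rho=\log|s|_h^2$, one has $v''>0$, so $-\ddbar\log(1+\ve/|s|_h^2)=-v'\ddbar\rho - v''\,\partial\rho\wedge\dbar\rho$ has a \emph{nonpositive} rank-one (radial) part. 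Thus the evolution inequality does not carry the favorable sign; and your fallback of adding a multiple of $\varphi_\ve$ does not repair it, because $\dot\varphi_\ve = G_\ve - \frac{n-k}{k}\log(1+\ve/|s|_h^2)$ reintroduces the uncontrolled logarithm into $AW - C$. The paper works directly with $W = \dot\varphi_\ve + A\varphi_\ve$: then $(\partial_t-\Delta_\ve)W = \tr{\omega_\ve}{(\pi^*\chi + A\theta_\ve)} + A\dot\varphi_\ve - An$, the form $\pi^*\chi + A\theta_\ve$ is $\ge 0$ for $A$ large (recall $\chi=0$ on $B$ and $\theta_\ve$ is positive definite away from $\pi^{-1}(B)$), whence $(\partial_t-\Delta_\ve)W\ge AW-C$, giving $\dot\varphi_\ve\ge -C$ directly; then $\omega_\ve^n = e^{\dot\varphi_\ve}(\pi^*\gamma_\ve)\Omega_X \ge c|s|_h^{2(n-k)/k}\Omega_X$ as $\gamma_\ve\ge r^{2(n-k)}$. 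The lesson is the same as for (i): the corrector $\pm A\varphi_\ve$, through $\Delta_\ve\varphi_\ve = n - \tr{\omega_\ve}{\theta_\ve}$, supplies the trace term and the zeroth-order feedback needed to close the maximum principle, without ever invoking a uniform metric lower bound or introducing the singular weight $\log|s|_h^2$.
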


\begin{proof} Using the volume form bound for the K\"ahler-Ricci flow before the singular time,  note that when $t=T$, 
\begin{equation}
\frac{\omega_\ve^n}{\Omega_X} =  \frac{\omega_{T-\ve}^n}{ \Omega_X}
\end{equation} is uniformly bounded from above, independent of $\ve$.   Using Lemma \ref{lemmaf}, we compute
\begin{equation}
(\ddt{}- \Delta_\ve)\log \frac{\omega_\ve^n}{\Omega_X} =  (\ddt{}- \Delta_\ve) \ddt{\varphi_\ve} - \Delta_\ve \log (\pi^* \gamma_{\ve})= \tr{\omega_{\ve}}{\pi^*(\chi- \ddbar \log  \gamma_{\ve})}\leq C \tr{\omega_\ve}{ \pi^*\omega_Y}.
\end{equation}

We will apply the maximum principle to the quantity 
\begin{equation}
H= \log \frac{\omega_\ve^n}{\Omega_X}  - A\varphi_\ve.
\end{equation}
For $A$ sufficiently large, we have
\begin{equation} \label{Alarge}
A\Delta_{\ve} \varphi_{\ve} = A\tr{\omega_{\ve}}{(\omega_{\ve} - \pi^*( \hat{\omega}_{Y,t} - \frac{\ve}{T} \omega_Y ) - \frac{\ve}{T}\omega_0 )} \le An - C \tr{\omega_\ve}{ \pi^*\omega_Y}.
\end{equation}
Since $\gamma_{\ve}$ and $|\varphi_{\ve}|$ are uniformly bounded from above,
\begin{equation}
(\ddt{}-\Delta_\ve) H \le An + A \log \frac{(\pi^*\gamma_\ve) \Omega_X}{\omega_\ve^n} \le C - A H,
\end{equation}
and thus $H$ is bounded from above by the maximum principle.   This gives (i).

For (ii), we first compute using Lemma \ref{lemmaf},
\begin{eqnarray} \left( \ddt{} - \Delta_{\omega_{\ve}} \right)  \log \tr{\pi^* \omega_Y}{\omega_{\ve}} \le C'  \tr{\omega_{\ve}}{\pi^* \omega_Y} + \frac{C}{|s|^{2\alpha}_h \tr{\pi^*\omega_Y}{\omega_{\ve}}},
\end{eqnarray}
for a uniform constants $C$, $C'$ assuming that $\alpha$ is sufficiently large.  We also have
\begin{eqnarray} \nonumber
\Delta_{\ve} \varphi_{\ve}& = & \tr{\omega_{\ve}}{\left( \omega_{\ve} - \pi^*(\hat{\omega}_{Y,t} - \frac{\ve}{T}\omega_Y) - \frac{\ve}{T} \omega_0\right)} \\
& \le & n - c_0 \, \tr{\omega_{\ve}}{\pi^* \omega_Y}, \label{lapphie}
\end{eqnarray}
for some $c_0>0$.
For a large constant $A$, we define $Q = \log \left( |s|^{2\alpha}_h \tr{\pi^* \omega_Y}{\omega_{\ve}} \right) - A \varphi_{\ve}$.   
As long as $\alpha$ is sufficiently large, $Q$ tends to $-\infty$ along $E$.  We compute on $X \setminus E$,
\begin{eqnarray} \nonumber
\left( \ddt{} - \Delta_{\omega_{\ve}} \right) Q  & \le &  \frac{C}{|s|^{2\alpha}_h \tr{\pi^*\omega_Y}{\omega_{\ve}}}  - \tr{\omega_{\ve}}{\left( A c_0 \pi^*\omega_Y - \alpha R(h) - C' \pi^* \omega_Y \right)} \\
&& \mbox{} - A \log \frac{\omega_{\ve}^n}{(\pi^* \gamma_{\ve}) \Omega_X} + An. 
\end{eqnarray}
We wish to show that $Q$ is bounded from above, and since $\varphi_{\ve}$ is uniformly bounded we may assume without loss of generality that at a maximum point of $Q$,
$\frac{1}{|s|^{2\alpha}_h \tr{\pi^*\omega_Y}{\omega_{\ve}}} \le C''$.
Choosing $A$ sufficiently large we have 
\begin{eqnarray} 
\left( \ddt{} - \Delta_{\omega_{\ve}} \right) Q  & \le & C  - \tr{\omega_{\ve}}{\omega_0} - A \log \frac{\omega_{\ve}^n}{(\pi^* \gamma_{\ve}) \Omega_X} 
 \le  C  - \tr{\omega_{\ve}}{\omega_0} - A \log \frac{\omega_{\ve}^n}{\omega_0^n}. \qquad 
\end{eqnarray}
Hence at a maximum point of $Q$ we have
\begin{equation} 
A \log \frac{\omega_{\ve}^n}{\omega_0^n} + \tr{\omega_{\ve}}{\omega_0} \le C,
\end{equation}
It follows that at the maximum point of $Q$ we have $\tr{\omega_0}{\omega_{\ve}}  \le C$, and hence as long as we chose $\alpha$ sufficiently large,  $\tr{\pi^* \omega_Y}{\omega_{\ve}} \le C |s|^{-2\alpha}_h$.
 It follows that $Q$ is bounded from above at that point and we obtain (ii).

For (iii) put $W = \dot{\varphi}_{\ve} + A \varphi_{\ve}$ for $A$ to be determined.  Compute
\begin{eqnarray}
\left( \ddt{} - \Delta_{\omega_{\ve}} \right) W & = & \tr{\omega_{\ve}} \pi^*\chi + A \dot{\varphi}_{\ve} - A \tr{\omega_{\ve}}(\omega_{\ve} - (\pi^*(\hat{\omega}_{Y,t} - \frac{\ve}{T} \omega_Y) + \frac{\ve}{T} \omega_0)).
\end{eqnarray}
Then choosing $A$ large enough so that
\begin{equation}
A(\pi^*(\hat{\omega}_{Y,t} - \frac{\ve}{T} \omega_Y) + \frac{\ve}{T} \omega_0) \ge - \pi^* \chi,
\end{equation}
 we obtain
\begin{equation}
\left( \ddt{} - \Delta_{\omega_{\ve}} \right) W  \ge  A \dot{\varphi}_{\ve} - An \ge AW - C,
\end{equation}
for a constant $C$.  It follows from the maximum principle that $W$ and hence $\dot{\varphi}_{\ve}$ is bounded from below, giving (iii). \qed
\end{proof}

Note that Lemma \ref{lemmavole} shows in particular  that $\omega_{\ve}$ is uniformly bounded from above and below away from zero on compact subsets of $X \setminus E$.
We now prove an upper bound for $\omega_{\ve}$ near $E$, which is analogous to parts (i) and (ii) of Lemma \ref{lemmaest2}.

\begin{lemma} \label{lemma25}
There exists $\delta>0$ and a uniform constant $C$ such that on $X \setminus E$,
\begin{enumerate}
\item[(i)] $\displaystyle{\omega_{\ve} \le \frac{C}{|s|^{2/k}_h}  \pi^*\omega_{\emph{Eucl}}}$.
\item[(ii)] $\displaystyle{\omega_{\ve} \le \frac{C}{|s|^{2(1-\delta)/k}_h} (\omega_0} + \pi^*\omega_{\emph{Eucl}})$.
\end{enumerate}
\end{lemma}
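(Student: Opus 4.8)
My plan is to repeat, with $\omega(t)$ replaced by $\omega_\ve$, the maximum-principle arguments used to prove parts (i) and (ii) of Lemma~\ref{lemmaest2} (equivalently Lemmas~2.5 and~2.6 of \cite{SW2}). Away from a fixed neighbourhood of $E$ both estimates are immediate and uniform in $\ve$ from Lemma~\ref{lemmavole}, so it suffices to work in $\pi^{-1}(B_{R_0}\setminus\{0\})$, identified as in Section~\ref{secthirz} with a punctured ball in $\mathbb{C}^n/\mathbb{Z}_k$, where $|s|^2_h=r^{2k}$. The one genuinely new feature is that $\omega_\ve$ does not solve the K\"ahler--Ricci flow; however, by Lemma~\ref{lemmaf}(iii) we have $\partial_t\omega_\ve\le-\mathrm{Ric}(\omega_\ve)+(n-k)\ddbar\log r^2$ there with $(n-k)\ddbar\log r^2\ge 0$, so the standard computation~(\ref{tr1}) gives, for a fixed K\"ahler metric $\tilde\omega$,
\[
\Big(\tfrac{\partial}{\partial t}-\Delta_{\omega_\ve}\Big)\log\tr{\tilde\omega}{\omega_\ve}\ \le\ C_{\tilde\omega}\tr{\omega_\ve}{\tilde\omega}\ +\ \frac{(n-k)\,\tr{\tilde\omega}{\ddbar\log r^2}}{\tr{\tilde\omega}{\omega_\ve}}.
\]
Taking $\tilde\omega=\oeuc$ (which is flat, so $C_{\oeuc}=0$ and $\tr{\oeuc}{\ddbar\log r^2}=(n-1)/r^2$) and $\tilde\omega=\omega_0$, this differs from the K\"ahler--Ricci case only by the extra term, whose sole dangerous contribution is $(n-1)(n-k)/(r^2\tr{\oeuc}{\omega_\ve})$.

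Following Lemma~\ref{lemmaest2}, for $\ve'>0$ I would apply the maximum principle on $(\overline{B_{R_0}}\setminus\{0\})\times[T,T']$ to
\[
H \;=\; \log\tr{\omega_0}{\omega_\ve}\;+\;A\log\!\big(|s|^{2(1+\ve')/k}_h\,\tr{\oeuc}{\omega_\ve}\big)\;-\;A^2\varphi_\ve,
\]
with $A$ a large constant. For each fixed $\ve$ the metric $\omega_\ve$ is smooth and nondegenerate on $X$, so $\tr{\oeuc}{\omega_\ve}$ grows at most like $|s|_h^{-2/k}$ near $E$ and the factor $|s|^{2(1+\ve')/k}_h$ forces $H\to-\infty$ along $E$; on $\partial B_{R_0}$ the uniform bounds of Lemma~\ref{lemmavole} apply; and at $t=T$ one has $\omega_\ve=\omega(T-\ve)$, which is controlled — uniformly in $\ve$ — by applying Lemma~\ref{lemmaest2} at the time $T-\ve<T$ (here one expects to need the power of $|s|_h$ in the test function taken slightly larger than $2/k$, sent down afterwards, to absorb the fact that $\tr{\omega_0}{\omega(T-\ve)}$ may grow like $|s|_h^{-2}$ near $E$). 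At an interior space--time maximum $(x_0,t_0)$ one runs the computation using $|R(h)|_{\omega_0}\le C$, the identity $\Delta_{\omega_\ve}\varphi_\ve=n-\tr{\omega_\ve}{(\pi^*(\hat\omega_{Y,t}-\tfrac{\ve}{T}\omega_Y)+\tfrac{\ve}{T}\omega_0)}\le n-c_0\tr{\omega_\ve}{\omega_0}$, and the lower bound $\dot\varphi_\ve\ge-C$ from the proof of Lemma~\ref{lemmavole}(iii) (so that $-A^2\log\frac{\omega_\ve^n}{(\pi^*\gamma_\ve)\Omega_X}=-A^2\dot\varphi_\ve\le A^2C$), together with AM--GM in the form $1/\tr{\omega_0}{\omega_\ve}\le\tr{\omega_\ve}{\omega_0}/n^2$; the dangerous term $(n-1)(n-k)/(r^2\tr{\oeuc}{\omega_\ve})$ is dealt with by the usual dichotomy at $(x_0,t_0)$ (either it is bounded there, and the computation proceeds, or $\tr{\oeuc}{\omega_\ve}(x_0)$ is already small enough there that the desired bound holds), exactly as in the proof of Lemma~\ref{lemmavole}(ii). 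This yields a bound on $H$ uniform in $\ve,\ve'$; letting $\ve'\to0$ and using the a priori comparison $|s|^{2/k}_h\tr{\oeuc}{\omega_\ve}\le C\tr{\omega_0}{\omega_\ve}$ from~(\ref{orbineqs}) to solve the resulting inequality gives~(i).

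Part~(ii) then follows purely algebraically, as in Lemma~\ref{lemmaest2}: the bound $\log\tr{\omega_0}{\omega_\ve}+A\log(|s|^{2/k}_h\tr{\oeuc}{\omega_\ve})\le C$ gives $(\tr{\omega_0}{\omega_\ve})^{1/A}\,\tr{\oeuc}{\omega_\ve}\le C|s|^{-2/k}_h$, and combining this with $\tr{(\omega_0+\oeuc)}{\omega_\ve}\le\min\{\tr{\omega_0}{\omega_\ve},\tr{\oeuc}{\omega_\ve}\}$ gives $\tr{(\omega_0+\oeuc)}{\omega_\ve}\le C|s|^{-2(1-\delta)/k}_h$ with $\delta=1/(A+1)$. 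The step I expect to be the main obstacle is keeping every constant uniform in $\ve$ near $E$, where all of the a priori bounds degenerate: this is precisely why the lower bound on $\dot\varphi_\ve$ of Lemma~\ref{lemmavole}(iii) is essential (to control the volume-form term $-A^2\log(\omega_\ve^n/((\pi^*\gamma_\ve)\Omega_X))$), why the initial data must be re-expressed through Lemma~\ref{lemmaest2} at time $T-\ve$, and why the dangerous term $(n-1)(n-k)/(r^2\tr{\oeuc}{\omega_\ve})$ must be absorbed via the maximum-point dichotomy rather than directly, exactly as in \cite{SW2}.
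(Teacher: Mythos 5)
Your proposal follows essentially the same route as the paper's proof: the paper likewise localizes near $E$, uses Lemma \ref{lemmaf} to derive the two evolution inequalities for $\log \tr{\pi^*\oeuc}{\omega_{\ve}}$ and $\log \tr{\omega_0}{\omega_{\ve}}$ (with the extra term bounded by $C/(|s|_h^{2/k}\tr{\pi^*\oeuc}{\omega_{\ve}})$ and $C/\tr{\omega_0}{\omega_{\ve}}$ respectively), applies the maximum principle to $Q_\eta = \log\tr{\omega_0}{\omega_{\ve}} + A\log(|s|_h^{2(1+\eta)/k}\tr{\pi^*\oeuc}{\omega_{\ve}}) - A\varphi_{\ve}$, absorbs the singular term at the maximum by exactly the dichotomy-plus-volume-bound mechanism you invoke from Lemma \ref{lemmavole}, and deduces (ii) from (i) by the same algebraic step. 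Your only deviations --- disposing of the term $-A\log(\omega_{\ve}^n/((\pi^*\gamma_{\ve})\Omega_X))$ via the lower bound on $\dot{\varphi}_{\ve}$ rather than via the $\log\tau + \kappa/\tau$ trick, and your explicit treatment of the data at $t=T$ (a point the paper passes over in silence) --- are minor variants of the same argument.
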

\begin{proof}
It suffices to prove the estimates (i) and (ii) on a small neighborhood $V$ of $E$ in $X$, contained in $\pi^{-1}(B)$.
Applying Lemma \ref{lemmaf} and using the fact that $\oeuc$ is flat on $B$,  we have on $V \setminus E$,
\begin{eqnarray} 
\left( \ddt{} - \Delta_{\omega_{\ve}} \right) \log \tr{\pi^* \oeuc}{\omega_{\ve}} & \le & \frac{C}{|s|^{2/k}_h \tr{\pi^* \oeuc}{\omega_{\ve}}}.
\end{eqnarray}
and using (\ref{hatg}),
\begin{eqnarray} \nonumber
\left( \ddt{} - \Delta_{\omega_{\ve}} \right) \log \tr{\omega_0}{\omega_{\ve}}  & \le & C \tr{\omega_{\ve}}{\omega_0} + \frac{(n-k)}{\tr{\omega_0}{\omega_{\ve}}} \tr{\omega_0}{\left(\frac{\sqrt{-1}}{2\pi} \frac{1}{r^2} \sum_{i,j} \left( \delta_{ij} - \frac{\overline{z^i} z^j}{r^2} \right) dz^i \wedge d\overline{z^j}\right)} \\
& \le & C \tr{\omega_{\ve}}{\omega_0} + \frac{C}{\tr{\omega_0}{\omega_{\ve}}},
\end{eqnarray}
since $\omega_0 \ge \frac{c}{r^2}(\delta_{ij} - \frac{\ov{z^i}z^j}{r^2} )$ for some $c>0$.  Fix a small $\eta>0$.  We will apply the maximum principle to the quantity
\begin{equation}
Q_{\eta} = \log \tr{\omega_0}{\omega_{\ve}} + A \log( |s|_h^{2(1+ \eta)/k} \tr{\pi^* \oeuc}{\omega_{\ve}}) - A \varphi_{\ve}.
\end{equation}
Then, using (\ref{lapphie})
\begin{eqnarray} \nonumber
\left( \ddt{} - \Delta_{\omega_{\ve}} \right) Q_{\eta}  & \le & C  \tr{\omega_{\ve}}{\omega_0}+ \frac{C}{\tr{\omega_0}{\omega_{\ve}}} + \frac{CA}{|s|^{2/k}_h (\tr{\pi^* \oeuc}{\omega_{\ve}})} +A n \\ 
&& \mbox{} - A \tr{\omega_{\ve}}{\left(c_0 \pi^*\omega_Y - \frac{(1+\eta)}{k} R(h)  \right)}
  - A \log \frac{\omega_{\ve}^n}{(\pi^*\gamma_{\ve}) \Omega_X}  . \label{eQ}
\end{eqnarray}
We wish to show that at a maximum point of $Q_{\eta}$ we have $\tr{\omega_0}{\omega_{\ve}} \le C$.  It would then follow that  $Q_{\eta} \le C$, since
from (\ref{orbineq}) we have
\begin{equation} \label{orbineq2}
|s|^{2/k}_h \tr{\pi^* \oeuc}{\omega_{\ve}} \le \tr{\omega_0}{\omega_{\ve}}.
\end{equation}
Hence we may assume without loss of generality that $\tr{\omega_0}{\omega_{\ve}} \ge 1$
and, since $\varphi_{\ve}$ is uniformly bounded,
\begin{equation}
\frac{1}{|s|^{2/k}_h (\tr{\pi^* \oeuc}{\omega_{\ve}})} \le (\tr{\omega_0}{\omega_{\ve}})^{1/A}.
\end{equation}
Applying the volume form bound of Lemma \ref{lemmavole}, we may assume then that, taking $A>n-1$,
\begin{equation}
\frac{CA}{|s|^{2/k}_h (\tr{\pi^* \oeuc}{\omega_{\ve}})} \le CA(\tr{\omega_{\ve}}{\omega_0})^{(n-1)/A} \le C_A + \tr{\omega_{\ve}}{\omega_0},
\end{equation}
where $C_A$ means a constant that depends on $A$.
Hence, by  (\ref{eQ}), we have at a maximum of $Q_{\eta}$,
\begin{equation} 
0   \le  (C+1)  \tr{\omega_{\ve}}{\omega_0} - A\tr{\omega_{\ve}}{\left( c_0 \pi^*\omega_Y - \frac{(1+\eta)}{k} R(h) \right)} \label{eQ2}
  - A \log \frac{\omega_{\ve}^n}{(\pi^*\gamma_{\ve}) \Omega_X}  + C_A. \
\end{equation}
Choosing $A$ sufficiently large
we have at  a maximum point of $Q_{\eta}$,
\begin{equation} 
A \log \frac{\omega_{\ve}^n}{(\pi^*\gamma_{\ve}) \Omega_X} + \tr{\omega_{\ve}}{\omega_0} \le C,
\end{equation}
and hence
\begin{equation} 
A \log \frac{\omega_{\ve}^n}{\omega_0^n} + \tr{\omega_{\ve}}{\omega_0} \le C,
\end{equation}
giving $\tr{\omega_0}{\omega_{\ve}}  \le C$ as required.  Hence by the maximum principle we have a uniform upper bound on $Q_{\eta}$, independent of $\eta$.  Then (i) follows by applying again (\ref{orbineq2}).  Finally, (ii) follows by a similar argument to that in Lemma \ref{lemmaest2}.  \qed
\end{proof}

We can now prove:

\begin{lemma}
There exist positive constants $C$, $\alpha$, independent of $\ve$, such that on $X \setminus E$,
\begin{equation} \label{oe1}
\frac{|s|^{2\alpha}}{C} \omega_0 \le \omega_{\ve} \le \frac{C}{|s|^{2\alpha}_h} \omega_0.
\end{equation}
Fix a large positive integer $N$.  Then for each integer $m$ with $0\le m \le N$ there exist $C_m, \alpha_m>0$ such that
\begin{equation} \label{oe2}
| (\nabla^0_{\mathbb{R}})^m g_{\ve} |_{\omega_0} \le \frac{C_m}{|s|^{2\alpha_m}_h},\\
\end{equation}
where $\nabla^0_{\mathbb{R}}$ denotes the real covariant derivative with respect to $g_0$.
\end{lemma}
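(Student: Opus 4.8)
The plan is to deduce the two-sided bound (\ref{oe1}) directly from Lemma \ref{lemmavole}, and then to obtain (\ref{oe2}) by a localized parabolic bootstrap for the Monge--Amp\`ere flow (\ref{maX1}), carefully tracking the dependence of every constant on a fixed power of $|s|_h^{-1}$, as in Corollary 4.1 of \cite{SW2}.

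\textbf{The metric equivalence (\ref{oe1}).}  The upper bound is immediate: Lemma \ref{lemmavole}(ii) gives $\omega_{\ve}\le C|s|_h^{-2\alpha}\pi^*\omega_Y$, and $\pi^*\omega_Y$ is a smooth $(1,1)$-form on $X$ by Lemma \ref{lemmaoy}(ii), so $\pi^*\omega_Y\le C\omega_0$.  For the lower bound I would combine this with the volume lower bound of Lemma \ref{lemmavole}(iii): writing $\lambda_1\le\cdots\le\lambda_n$ for the eigenvalues of $\omega_{\ve}$ relative to $\omega_0$ at a point of $X\setminus E$, we have $\lambda_n\le C|s|_h^{-2\alpha}$ and $\lambda_1\cdots\lambda_n=\omega_{\ve}^n/\omega_0^n\ge C^{-1}|s|_h^{2(n-k)/k}$, hence
\begin{equation}
\lambda_1\ \ge\ \frac{\lambda_1\cdots\lambda_n}{\lambda_n^{\,n-1}}\ \ge\ \frac{1}{C}\,|s|_h^{2(n-k)/k+2\alpha(n-1)},
\end{equation}
which after enlarging $\alpha$ and $C$ gives both inequalities in (\ref{oe1}).

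\textbf{The higher order estimates (\ref{oe2}).}  I would work on the regions $U_{\rho}:=\{\,x\in X : |s(x)|_h^2\ge\rho\,\}$, $\rho\in(0,1)$, and run the standard sequence of interior parabolic estimates for (\ref{maX1}).  The inputs are uniform in $\ve$: by (\ref{oe1}) the form $\omega_{\ve}$ is equivalent to $\omega_0$ on $U_{\rho}$ with constants polynomial in $\rho^{-1}$; $|\varphi_{\ve}|\le C$; the right hand side $\dot\varphi_{\ve}=\log\big(\omega_{\ve}^n/((\pi^*\gamma_{\ve})\,\Omega_X)\big)$ is bounded on $U_{\rho}$ by a power of $\rho^{-1}$, using Lemma \ref{lemmavole}, the lower eigenvalue bound just established, and the fact that $\pi^*\gamma_{\ve}$ is two-sidedly bounded on $U_{\rho}$ uniformly in $\ve$; and the initial data $\varphi_{\ve}|_{t=T}=\varphi(T-\ve)$ is bounded in each $C^m(U_{\rho})$, uniformly in $\ve$, by a power of $\rho^{-1}$ --- this is the degenerate estimate for $g(t)$ (equivalently for the potential) before the singular time recalled at the start of Section \ref{sectiv}.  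With this in hand, the parabolic second-order (Aubin--Yau type) estimate bounds $\tr{\omega_0}{\omega_{\ve}}$ on $U_{2\rho}\times[T,T']$ by a power of $\rho^{-1}$; the parabolic Evans--Krylov theorem upgrades this to a $C^{2,\beta}$ bound on $U_{4\rho}\times[T,T']$; and parabolic Schauder estimates applied to the successively differentiated equations give, for each $m\le N$, a bound $\|\varphi_{\ve}\|_{C^m(U_{8\rho}\times[T,T'])}\le C_m\rho^{-N_m}$ with $C_m,N_m$ independent of $\ve$.  Since $g_{\ve}$ and its $g_0$-covariant derivatives are built from $\omega_0$ and the derivatives of $\varphi_{\ve}$, taking $\rho=\tfrac{1}{8}|s(x)|_h^2$ at a given $x\in X\setminus E$ then yields (\ref{oe2}) with $\alpha_m=N_m$.

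The routine part is the bootstrap; the part I expect to be the main obstacle is the bookkeeping, i.e.\ checking that each constant coming out of the second-order estimate, Evans--Krylov and Schauder depends on the degenerating data only through a fixed power of $|s|_h^{-1}$ --- the essential point being that the ellipticity ratio of $\omega_{\ve}$ with respect to $\omega_0$ on $U_{\rho}$ is polynomial in $\rho^{-1}$ --- and that the $t=T$ initial data is controlled in the same way uniformly in $\ve$.  Both are handled as in \cite{SW2}.
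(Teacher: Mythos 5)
Your proposal is correct and takes essentially the same route as the paper: the paper's own proof is a one-liner saying the two-sided bound follows "immediately from Lemmas \ref{lemmavole} and \ref{lemma25}" and that the higher-order estimates follow "by a similar argument as in \cite{SW2}," and you have simply spelled out the standard eigenvalue/volume-form argument for the lower bound and the localized parabolic bootstrap that the paper leaves implicit.
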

\begin{proof}
The inequalities (\ref{oe1}) follow immediately from Lemmas \ref{lemmavole} and \ref{lemma25}.  For the higher order estimates (\ref{oe2}) we apply a similar argument as in \cite{SW2}.  \qed
\end{proof}

Thus we have estimates of $\varphi_{\ve}$ of all orders away from $E$.  We can take a convergent subsequence of  $\varphi$.  Setting $\tilde{\varphi}$ to be the `push down' of $\varphi$ on $Y\setminus \{ y_0\}$, we have proved the following.

\begin{proposition} $\tilde{\varphi}$ constructed above solves the parabolic Monge-Amp\`ere flow
\begin{equation}
\ddt{ \tilde{\varphi}}= \log \frac{ (\hat{\omega}_{Y,t}+\ddbar\tilde{\varphi})^n}{\Omega_{\emph{orb}}}, ~~~\tilde{\varphi}|_{t=T}= \psi_T
\end{equation}
on $[T, T']\times Y\setminus \{y_0\}$ with
with $\tilde{\varphi} \in L^\infty ([T,T']\times Y) \cap C^\infty ([T, T']\times Y\setminus \{y_0\}).$
\end{proposition}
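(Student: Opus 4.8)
The plan is to obtain $\tilde{\varphi}$ as a limit, as $\ve \to 0$, of the push-downs to $Y \setminus \{y_0\}$ of the solutions $\varphi_{\ve}$ of the Monge-Amp\`ere flows (\ref{maX1}) on $X$, using the uniform estimates proved earlier in this section.

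First I would record the relevant uniform bounds. Fix a compact set $K \subset X \setminus E$; on $K$ the section norm $|s|_h$ is bounded uniformly away from $0$, so by the uniform $L^{\infty}$ estimate for $\varphi_{\ve}$ together with (\ref{oe1}) and (\ref{oe2}) the functions $\varphi_{\ve}$ are bounded, uniformly in $\ve$, in $C^m$ on $[T, T'] \times K$ for every $m$. Here I would use parabolic Schauder theory to upgrade the spatial bounds (\ref{oe2}) to full space-time bounds up to and including $t = T$, which is legitimate because the initial data $\varphi(T-\ve)$ converge in $C^{\infty}$ on compact subsets of $X \setminus E$ to $\pi^*\psi_T$, as established in Section \ref{sectiv}. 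By Arzel\`a-Ascoli and a diagonal argument over an exhaustion of $X \setminus E$, there is then a sequence $\ve_i \to 0$ with $\varphi_{\ve_i} \to \varphi$ in $C^{\infty}_{\mathrm{loc}}([T, T'] \times (X \setminus E))$; the uniform $L^{\infty}$ bound gives $\varphi \in L^{\infty}([T, T'] \times X)$.

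Next I would pass to the limit in (\ref{maX1}). On $[T, T'] \times (X \setminus E)$ we have $\omega_{\ve} = \pi^*(\hat{\omega}_{Y,t} - \tfrac{\ve}{T}\omega_Y) + \tfrac{\ve}{T}\omega_0 + \ddbar \varphi_{\ve} \to \pi^* \hat{\omega}_{Y,t} + \ddbar \varphi$ locally smoothly, and by the lower bound in (\ref{oe1}) the limit satisfies $\pi^*\hat{\omega}_{Y,t} + \ddbar \varphi \ge \tfrac{1}{C}|s|^{2\alpha}_h \omega_0 > 0$ there. Combining this with the pointwise convergences noted after (\ref{maX1}), namely $(\pi^*\gamma_{\ve})\Omega_X \to \pi^*\Oorb$ and $\varphi(T-\ve) \to \pi^*\psi_T$, the limit satisfies $\ddt{\varphi} = \log\big((\pi^*\hat{\omega}_{Y,t} + \ddbar\varphi)^n / \pi^*\Oorb\big)$ on $[T,T'] \times (X \setminus E)$ with $\varphi|_{t=T} = \pi^*\psi_T$. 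Since $\pi$ restricts to a biholomorphism $X \setminus E \to Y \setminus \{y_0\}$ and $\hat{\omega}_{Y,t}$, $\Oorb$, $\psi_T$ are all pullbacks under $\pi$, the function $\tilde{\varphi} := (\pi|_{X\setminus E}^{-1})^*\varphi \in C^{\infty}([T,T'] \times (Y \setminus \{y_0\}))$ solves (\ref{maY}) with $\tilde{\varphi}|_{t=T} = \psi_T$, and the uniform $L^{\infty}$ bound on $\varphi_{\ve}$ over all of $X$ shows that $\tilde{\varphi}$ is bounded on $Y \setminus \{y_0\}$, hence extends to an element of $L^{\infty}([T,T'] \times Y)$.

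The only point requiring real care --- the closest thing to an obstacle --- is the uniformity of the higher-order estimates up to the initial time $t = T$ used in the first step; this is why one needs the \emph{smooth} (not merely bounded) convergence of the initial data $\varphi(T-\ve)$ on compact subsets of $X \setminus E$. Everything else is a standard compactness-and-limit argument, and in particular no control near $E$ (equivalently near $y_0$) is used beyond the global $L^{\infty}$ bound, which is precisely why the conclusion asserts smoothness only on $[T,T'] \times (Y \setminus \{y_0\})$.
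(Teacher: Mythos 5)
Your proposal follows the paper's approach exactly: take the solutions $\varphi_{\ve}$ of the regularized flows (\ref{maX1}), use the uniform $L^\infty$ bound and the degenerate higher-order estimates (\ref{oe1}), (\ref{oe2}) to extract a $C^\infty_{\mathrm{loc}}$-convergent subsequence away from $E$, pass to the limit in the equation, and push down by $\pi$. The paper leaves the compactness-and-limit step as a one-line remark, whereas you helpfully spell out the Arzel\`a--Ascoli/diagonal argument and flag that one must convert the spatial bounds (\ref{oe2}) into full parabolic bounds up to $t=T$ using the smooth convergence of the initial data $\varphi(T-\ve)\to\pi^*\psi_T$ on compact subsets of $X\setminus E$ (Lemma \ref{basicestimates}); these are exactly the details implicit in the paper.
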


We can now finish the proof of Theorem \ref{mainthm}.

\begin{proof}[Proof of part (v) of Theorem \ref{mainthm}]  Apply Lemma \ref{lemmau} to see that $\tilde{\varphi}$ coincides with the solution $\varphi$ constructed in Section \ref{sectiv}.  Using Lemma \ref{lemma25}, there exist uniform constants $C$ and $\delta>0$ so that the solution $\omega(t)$ of the K\"ahler-Ricci flow on $Y$ constructed in Section \ref{sectiv} satisfies the estimates
\begin{enumerate}
\item[(i)] $\displaystyle{\omega(t) \le \frac{C}{r^2}  \oeuc}$
\item[(ii)] $\displaystyle{\omega(t) \le \frac{C}{r^{2(1-\delta)}} (\omega_0} + \oeuc)$
\item[(iii)] $\displaystyle{
|W|^2_{g} \le \frac{C}{r^{2(1-\delta)}}}$ 
where $W = \sum_{i=1}^n \left( \frac{x_i}{r} \frac{\partial}{\partial x_i} + \frac{y_i}{r} \frac{\partial}{\partial y_i} \right)$ the unit length radial vector field with respect to $g_{\textrm{Eucl}}$, with $z_i = x_i + \sqrt{-1} y_i$.
\end{enumerate}
It follows as in \cite{SW2} that $(Y,g(t))$ converges to $(Y, d_T)$ in the Gromov Hausdorff sense as $t \rightarrow T^+$. \qed
\end{proof}

\section{Application to the manifolds $M_{n,k}$} \label{sectbundles}

In this section we provide more details of the statement of  Theorem \ref{thmhirz} and show how it follows from Theorem \ref{mainthm}.

The complex manifold $M_{n,k}$ can be described as 
\begin{align} \nonumber
M_{n,k}  & =  \{ ([Z_1, \ldots, Z_n], (\sigma, \mu)) \in \mathbb{P}^{n-1} \times ( (\mathbb{C}^n \times \mathbb{C}) \setminus \{ (0,0) \}) \ | \ \sigma \textrm{ is in} \\ & \quad  \ \textrm{ the line } \lambda \mapsto (\lambda (Z_1)^k, \ldots, \lambda (Z_n)^k) \}/ \sim
\end{align}
where
\begin{equation}
([Z_1, \ldots, Z_n], (\sigma, \mu)) \sim ([Z_1, \ldots, Z_n], (\sigma', \mu'))
\end{equation}
if there exists $a \in \mathbb{C}^*$ such that $(\sigma, \mu) = (a \sigma', a \mu')$.  Then $D_0$ and $D_{\infty}$ are the divisors $\{ \sigma =0 \}$ and 
$\{ \mu =0 \}$ respectively.

We define the map $\pi: M_{n,k} \rightarrow Y_{n,k}$ by
\begin{align} \label{defpi}
\pi\left( ([Z_1, \ldots, Z_n], (\sigma, \mu) )\right) = [\mu, b^{1/k} Z_1, \ldots, b^{1/k} Z_n],
\end{align}
where $b \in \mathbb{C}$ is defined by
\begin{align}
\sigma = b ((Z_1)^k, \ldots, (Z_n)^k).
\end{align}
Note that $b^{1/k}$ is only determined up to multiplication by a $k$th root of unity, but by the definition of $Y_{n,k}$,  the element
$[\mu, b^{1/k} Z_1, \ldots, b^{1/k} Z_n]$ in $Y_{n,k}$ is uniquely defined.  Moreover $\pi$ is globally well-defined, surjective, and injective on the complement of $D_0$.  

If we identify the line bundle $\mathcal{O}(-k)$ with the open subset $\{ \mu \neq 0 \}$ of $M_{n,k}$ and $\mathbb{C}^n/\mathbb{Z}_k$ with the open subset $\{ Z_0 \neq 0\}$ of $Y_{n,k}$ via $(z_1, \ldots, z_n) \mapsto [1, z_1, \ldots, z_n]$, then $\pi$ coincides with the map described in Section \ref{sectlocalmodel}.  Thus $\pi : M_{n,k} \rightarrow Y_{n,k}$ is precisely the blow down of the $(-k)$ exceptional divisor $D_0$ to the orbifold point $[1,0, \ldots, 0]$ in $Y_{n,k}$.

It remains to check condition (\ref{condition}).  The K\"ahler class $\alpha_t$ of $\omega(t)$ is given by
\begin{align}
\alpha_t = \frac{b_t}{k} [D_{\infty}] - \frac{a_t}{k} [ D_0],
\end{align}
where 
\begin{align}
b_t = b_0 - (k+n) t \quad \textrm{and} \quad a_t = a_0 + (k-n)t.
\end{align}
Under the assumptions (a) and (b) of Theorem \ref{thmhirz} we see that the inequalities $0<a_t<b_t$ hold until time $T = a_0/(n-k)$ and the limiting class is 
\begin{align}
[\omega_0] + T c_1(K_{M_{n,k}}) = \alpha_T = \frac{b_T}{k} [D_{\infty}],
\end{align}
where $b_T = b_0 - (k+n) a_0/ (n-k)>0$.

Consider the potential function $u$ on $\mathbb{C}^n$ given by 
\begin{align}
u = \log (1 +  ( | z^1|^2 + \cdots |z^n|^2 )^k).
\end{align}
The $(1,1)$ form $\omega_u = \ddbar u$ is $\mathbb{Z}_k$-invariant and defines a  (1,1) orbifold form on $\mathbb{C}^n/\mathbb{Z}_k$.  Moreover, $\omega_u$ extends to be a closed nonnegative orbifold $(1,1)$ form $\omega_Y$ on the weighted projective space $Y_{n,k}$, which is positive definite outside of the orbifold point (see \cite{C, FIK, SW1}).  The pull-back $\pi^* \omega_Y$ is smooth and lies in the cohomology class $[D_{\infty}]$.  We can choose a Hermitian metric on the line bundle $[D_0]$ over $M_{n,k}$ so that  $\pi^* \omega_Y - c R(h)$ is K\"ahler for $c>0$ sufficiently small.
 Hence
\begin{align}
[\omega_0] + T c_1(K_{M_{n,k}}) = \frac{b_T}{k} [\pi^* \omega_Y],
\end{align}
and $\omega_Y$ satisfies the conditions of Lemma \ref{lemmaoy}.  By the remark at the end of Section \ref{sectlocalmodel} we can apply Theorem \ref{mainthm} to obtain Theorem \ref{thmhirz}.

\section{Minimal surfaces of general type}
We now give a proof of Theorem \ref{thmmin}.  We assume for simplicity that $X$ is a minimal surface of general type with a single, irreducible, $(-2)$-curve $C$.

 As discussed in the introduction, we have a holomorphic map $\Phi: X \rightarrow \mathbb{P}^N$ whose image is $Y= \Xc$ and which blows down $C$.  Write $y_0 = \Phi(C) \in Y$.   

Let $\omega(t)$ be a solution of the normalized K\"ahler-Ricci flow (\ref{krf1}) for $t \in [0,\infty)$.   Denote by $\ofs$   the Fubini-Study metric on $\mathbb{P}^N$, and write $\chi = \Phi^* \ofs$.  Observe that $\chi$ is a smooth nonnegative (1,1) form on $X$.  Let $\Omega$ be a smooth volume form on $X$ satisfying
\begin{equation}
\chi = \frac{\sqrt{-1}}{2\pi} \partial \ov{\partial} \log \Omega \ge 0,
\end{equation}
and define a family of reference metrics $\hat{\omega}_t$ by
\begin{equation}
\hat{\omega}_t = e^{-t} \omega_0 + (1-e^{-t}) \chi \in [\omega(t)].
 \end{equation}
 
We write the normalized K\"ahler-Ricci flow as a parabolic complex Monge-Amp\`ere flow of potentials.  If $\varphi= \varphi(t)$ solves
\begin{equation}
 \ddt{\varphi} = \log \frac{ (\hat{\omega}_t + \ddbar \varphi)^n }{\Omega} - \varphi, ~~~~~~~~\varphi|_{t=0} = 0,
 \end{equation}
then $\omega(t) = \hat{\omega}_t + \frac{\sqrt{-1}}{2\pi} \partial \ov{\partial} \varphi$ solves the normalized K\"ahler-Ricci flow (\ref{krf1}) with initial metric $\omega_0$.

It was shown by Tsuji \cite{Ts} and Tian-Zhang \cite{TZha} that:

\begin{theorem} \label{ttz} Let $\omega_{\emph{KE}}$ be the smooth orbifold Kahler-Einstein metric on $X_{\emph{can}}$. Then $\omega(t)$ converges to  $\Phi^* \omega_{\emph{KE}}$ in $C^{\infty}$ on compact subsets of  $X\setminus C$ as $t \rightarrow \infty$.
\end{theorem}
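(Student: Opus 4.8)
\medskip
\noindent\textbf{Proof proposal for Theorem \ref{ttz}.}
This statement is due to Tsuji \cite{Ts} and Tian--Zhang \cite{TZha}, so one option is simply to quote their work; but since the argument is of exactly the type developed in this paper, here is the plan. The idea is to rewrite the normalized flow as the Monge--Amp\`ere flow for the potential $\varphi(t)$ introduced above, and to show that $\varphi(t)$ converges, in $C^\infty$ on compact subsets of $X\setminus C$, to the bounded potential $\varphi_\infty$ of the pulled-back orbifold K\"ahler--Einstein metric. Since $K_{\Xc}$ is ample as an orbifold line bundle, the orbifold Aubin--Yau theorem (see \cite{Kob}) provides the unique orbifold metric $\omega_{\textrm{KE}} \in c_1(K_{\Xc})$ with $\ric(\omega_{\textrm{KE}}) = -\omega_{\textrm{KE}}$; pulling back by $\Phi$, one obtains a function $\varphi_\infty$, bounded on $X$ and smooth on $X\setminus C$, with $\chi + \ddbar\varphi_\infty = \Phi^*\omega_{\textrm{KE}} \ge 0$ and $(\chi+\ddbar\varphi_\infty)^2 = e^{\varphi_\infty}\Omega$ on $X$. (Because a $(-2)$-curve is contracted crepantly, $\Omega$ is a genuine smooth volume form on $X$ and only $\chi = \Phi^*\ofs$ degenerates, along $C$; existence of this bounded $\varphi_\infty$ follows from \cite{Kol1, EGZ}.)

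First I would record the zeroth order estimates. The damping term $-\varphi$ in the flow gives, by the maximum principle, a uniform bound $|\varphi(t)| \le C$, just as in Lemma \ref{basicestimates}. Differentiating the flow, $u = \dot\varphi$ satisfies $(\partial_t - \Delta_\omega)u = e^{-t}\tr{\omega}{(\chi - \omega_0)} - u$, and the maximum principle (using the a priori bounds) gives $\dot\varphi(t)\to 0$ as $t\to\infty$, in fact $|\dot\varphi(t)| \le Ce^{-\lambda t}$ for some $\lambda>0$ as in \cite{TZha}. Setting $\psi = \varphi - \varphi_\infty$ and using $(\chi+\ddbar\varphi_\infty)^2 = e^{\varphi_\infty}\Omega$, the flow becomes $\dot\psi = \log\frac{(\Phi^*\omega_{\textrm{KE}} + e^{-t}(\omega_0 - \chi) + \ddbar\psi)^2}{(\Phi^*\omega_{\textrm{KE}})^2} - \psi$; a maximum-principle/ODE comparison then yields $\|\varphi(t) - \varphi_\infty\|_{C^0(X)}\to 0$.

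The main obstacle is the second order estimate away from $C$: $\chi = \Phi^*\ofs$ is only nonnegative and degenerates in the directions tangent to $C$, so the naive Aubin--Yau / parabolic Schwarz estimate fails along $C$. I would follow the device of Lemmas \ref{lemmaest2} and \ref{lemma25}: let $s$ be the defining section of $[C]$ and $h$ a Hermitian metric on $[C]$ with $|s|^2_h$ vanishing to order one along $C$ and $\hat\omega_t - cR(h) \ge \frac{1}{C}\omega_0$, and apply the maximum principle to $Q = \log\big(|s|^{2\varepsilon}_h\,\tr{\omega_0}{\omega}\big) - A\varphi$. Since $\tr{\omega_0}{\omega}$ is comparable to $\tr{\oeuc}{\omega}$ only up to powers of $|s|_h$, one checks that $Q \to -\infty$ near $C$, so any maximum of $Q$ lies in $X\setminus C$; there the standard inequality $(\partial_t - \Delta_\omega)\log\tr{\omega_0}{\omega} \le C\tr{\omega}{\omega_0}$ for the \emph{smooth} fixed metric $\omega_0$, together with the volume form bound $\omega^2 = e^{\dot\varphi+\varphi}\Omega$ from the previous step, bounds $\tr{\omega_0}{\omega}$ and gives $\omega \le C|s|^{-2\alpha}_h\,\omega_0$ on $X\setminus C$. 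The matching lower bound $\omega \ge c|s|^{2\alpha}_h\,\omega_0$ then follows from this upper bound and the two-sided volume bound.

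Finally, on any compact $K \Subset X\setminus C$, the two-sided bounds on $\omega$, combined with local parabolic Evans--Krylov and Schauder estimates, give uniform $C^k$ bounds on $\varphi$ over $K\times[t,t+1]$ for every $k$ and all $t$. Hence any sequence $t_j\to\infty$ has a subsequence along which $\varphi(\cdot,t_j)\to\varphi'$ in $C^\infty$ on compact subsets of $X\setminus C$; since $\dot\varphi\to 0$, the limit solves $(\chi+\ddbar\varphi')^2 = e^{\varphi'}\Omega$ on $X\setminus C$, and the $C^0$ convergence $\varphi(t)\to\varphi_\infty$ forces $\varphi' = \varphi_\infty$. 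Therefore $\varphi(t)\to\varphi_\infty$ in $C^\infty$ on compact subsets of $X\setminus C$, and $\omega(t) = \hat\omega_t + \ddbar\varphi(t) \to \chi + \ddbar\varphi_\infty = \Phi^*\omega_{\textrm{KE}}$, which is the assertion of Theorem \ref{ttz}.
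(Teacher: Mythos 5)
The paper does not prove Theorem \ref{ttz}; it quotes it from Tsuji \cite{Ts} and Tian--Zhang \cite{TZha}. Your outline follows the broad structure of those papers, supplemented by the weighted Schwarz-type estimate developed elsewhere in the present paper (Lemma \ref{lemmaest3}), and the overall route is sound.

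The one step that deserves more care than the sketch suggests is the claimed $L^\infty$ convergence $\|\varphi(t) - \varphi_\infty\|_{C^0(X)} \to 0$ via a maximum-principle/ODE comparison for $\psi = \varphi - \varphi_\infty$. Rewriting the flow in terms of $\psi$ places $\Phi^*\omega_{\textrm{KE}}$ in the denominator of the Monge--Amp\`ere quotient, and this form degenerates along $C$: since $\omega_{\textrm{KE}}$ is comparable to $\oeuc$ in the orbifold uniformizing chart, its pull-back is only pinched between $c\,|s|_h\,\omega_0$ and $C\,|s|_h^{-1}\omega_0$ near $C$ (cf.\ (\ref{orbineqs}) with $k=n=2$). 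At a maximum of $\psi - Be^{-t}$ occurring close to $C$ one cannot uniformly bound the quotient $(\Phi^*\omega_{\textrm{KE}} + e^{-t}\omega_0)^2/(\Phi^*\omega_{\textrm{KE}})^2$, so the ODE comparison does not close on the nose. Tsuji and Tian--Zhang instead first derive a decaying bound on $\sup_X |\dot\varphi(t)|$ by applying the maximum principle to auxiliary quantities such as $(e^t - 1)\dot\varphi - \varphi - nt$, whose heat operator has a favorable sign (one checks $(\partial_t - \Delta_\omega)\bigl[(e^t - 1)\dot\varphi - \varphi - nt\bigr] = -\,\mathrm{tr}_\omega\,\omega_0 \le 0$); this gives $\varphi(t)$ Cauchy in $L^\infty$, and the limit is then identified with $\varphi_\infty$ by the uniqueness theory for bounded solutions of the degenerate Monge--Amp\`ere equation from \cite{Kol1, EGZ}. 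With that substitution, the remainder of your argument --- the $|s|^{2\varepsilon}_h$-weighted trace estimate together with Kodaira's lemma $\hat\omega_t - cR(h) \ge c'\omega_0$, the deduction of the two-sided metric bound from the trace bound and the volume-form bound in complex dimension two, and the local Evans--Krylov/Schauder bootstrap --- is correct and matches the cited references.
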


Moreover, the following result was proved by Zhang \cite{Zha2}.

\begin{theorem} There exist uniform constants $c,C>0$ such that for all $t\geq0$,
\begin{enumerate}
\item[(i)] $\displaystyle{||\varphi||_{L^\infty}\leq C}$.
\item[(ii)] $\displaystyle{ c\, \omega_0^2 \leq \omega^2  \leq C \omega_0^2.}$
\item[(iii)] $| R| \le C$, where $R=R(\omega)$ is the scalar curvature.
\end{enumerate}
\end{theorem}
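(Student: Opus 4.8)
The plan is to establish (i) first, deduce (ii) from it together with a comparison with the K\"ahler--Einstein potential on $\Xc$, and then prove (iii), whose upper bound on the scalar curvature is the main obstacle. For (i), the upper bound on $\varphi$ is immediate: at a spatial maximum of $\varphi(\cdot,t)$ for $t>0$ one has $\ddbar\varphi\le0$, hence $\omega^2\le\hat\omega_t^2\le C\Omega$ because $\hat\omega_t$ is a convex combination of the fixed forms $\omega_0,\chi$ and $\Omega$ is uniformly equivalent to $\omega_0^2$; thus $\dot\varphi\le\log C-\varphi$ there and comparison with the ODE $\dot y=\log C-y$ gives $\sup_X\varphi(t)\le\max\{0,\log C\}$ for all $t$. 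For the lower bound I would first bound $\dot\varphi$ from above by the parabolic maximum principle applied to a quantity such as $\dot\varphi+A\varphi$ (the standard estimate of Tsuji and Tian--Zhang, using that $\hat\omega_t$ is comparable to $\chi$ for large $t$); then the Monge--Amp\`ere equation $(\hat\omega_t+\ddbar\varphi)^2=e^{\dot\varphi+\varphi}\Omega$ has density bounded \emph{pointwise} by $C$, and since the classes $[\hat\omega_t]$ lie in a compact family of big classes with $[\hat\omega_t]^2$ bounded away from $0$ (as $c_1(K_X)^2>0$) and $\Omega$ is a fixed smooth volume form, the uniform Ko\l odziej-type $L^\infty$ estimate (Ko\l odziej; Eyssidieux--Guedj--Zeriahi) gives $\textrm{osc}_X\varphi(t)\le C$; combining with the upper bound on $\varphi$ and with $\int_X e^{\dot\varphi+\varphi}\Omega=[\omega(t)]^2\ge c>0$ (which, using $\dot\varphi\le C$, forces $\sup_X\varphi(t)\ge-C$) yields (i).

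For (ii), since $\dim_{\mathbb C}X=2$ the form $\omega^2$ is the volume form and $\omega^2=e^{\dot\varphi+\varphi}\Omega$, so by (i) the claim is equivalent to a two-sided bound on $\dot\varphi$; the upper bound was obtained above. For the lower bound I would compare with the K\"ahler--Einstein metric: let $\varphi_\infty$ be the bounded continuous solution of $(\chi+\ddbar\varphi_\infty)^2=e^{\varphi_\infty}\Omega$, so that $\omega_{\textrm{KE}}:=\chi+\ddbar\varphi_\infty$ is the pullback of the orbifold K\"ahler--Einstein metric and $\omega_{\textrm{KE}}^2=e^{\varphi_\infty}\Omega$ is uniformly equivalent to $\Omega$. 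Using the flow equation and the equation for $\varphi_\infty$ one has $\dot\varphi+\varphi-\varphi_\infty=\log(\omega^2/\omega_{\textrm{KE}}^2)$ on $X\setminus C$, and a computation gives $\left(\ddt{}-\Delta\right)(\dot\varphi+\varphi-\varphi_\infty)=\tr{\omega}{\omega_{\textrm{KE}}}-2$. At a minimum of $\dot\varphi+\varphi-\varphi_\infty$, which one forces to occur away from $C$ by adding a small multiple of $\log|s|_h^2$ (the barrier device used repeatedly in this paper, with $s$ a defining section of $[C]$), one gets $\tr{\omega}{\omega_{\textrm{KE}}}\le2$, hence $\omega_{\textrm{KE}}^2\le\omega^2$ by the arithmetic--geometric mean inequality for the eigenvalues of $\omega_{\textrm{KE}}$ relative to $\omega$; therefore $\dot\varphi+\varphi-\varphi_\infty\ge0$ there, and since the value at $t=0$ is bounded below we conclude $\dot\varphi+\varphi-\varphi_\infty\ge-C$ on $X\times[0,\infty)$, hence $\omega^2\ge c\,\omega_0^2$.

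For (iii), the lower bound on $R$ is elementary: under (\ref{krf1}) the scalar curvature evolves by $\ddt{}R=\Delta R+|\textrm{Ric}(\omega)|^2+R$, and $|\textrm{Ric}(\omega)|^2\ge\frac12 R^2$ in complex dimension $2$, so $\frac{d}{dt}R_{\min}\ge\frac12 R_{\min}^2+R_{\min}$; comparison with this ODE (whose equilibria are $0$ and $-2$) gives $R\ge-C$ for all $t\ge0$. The upper bound on $R$ is the heart of the matter and the main difficulty. Here I would run the parabolic maximum principle on a carefully chosen combination of $R$, the trace $\tr{\omega}{\chi}$ (tied to $R$ by the identity $R+\tr{\omega}{\chi}=-\Delta(\dot\varphi+\varphi)$), and the potentials $\varphi,\dot\varphi$, using the two-sided volume bound (ii) and the uniform bound on the potentials (i) as the crucial inputs to absorb the bad terms; the non-collapsing of the volume form provided by (ii) is precisely what prevents $R$ from concentrating along the contracting $(-2)$-curve $C$. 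Carrying out this computation---identifying the right auxiliary quantity and dominating the cross terms, especially near $C$ where $\omega$ degenerates in one direction---is the hard step, and is the content of Zhang's argument.
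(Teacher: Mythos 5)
The paper does not actually prove this theorem: it is quoted verbatim from Zhang \cite{Zha2} (``the following result was proved by Zhang''), so there is no internal argument to compare yours against, and your proposal has to stand on its own. Your treatment of (i), of the upper bound in (ii) (via $\dot{\varphi}\le C$ and $\omega^2=e^{\dot{\varphi}+\varphi}\Omega$), and of the lower bound in (iii) is correct and standard. The identity $\left(\ddt{}-\Delta\right)(\dot{\varphi}+\varphi-\varphi_\infty)=\tr{\omega}{\omega_{\textrm{KE}}}-2$ is also correct.

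The gap is in the lower bound of (ii), which together with the upper bound on $R$ is precisely the nontrivial content of Zhang's theorem. Your barrier step does not close. Setting $F=\dot{\varphi}+\varphi-\varphi_\infty$ and $F_\ve=F-\ve\log|s|^2_h$, at a spatial minimum of $F_\ve$ (necessarily off $C$) the maximum principle gives
$\tr{\omega}{\left(\omega_{\textrm{KE}}-\ve R(h)\right)}\le 2$, not $\tr{\omega}{\omega_{\textrm{KE}}}\le 2$. You cannot discard the extra term: since $[C]\cdot H>0$ for $H$ ample, $-[C]$ is not nef, so no choice of $h$ makes $-R(h)\ge 0$; and the error is of size $\ve\,\tr{\omega}{\omega_0}$, which is \emph{not} uniformly small because $\tr{\omega}{\omega_0}$ blows up as the minimum point approaches $C$ (the evolving metric collapses in the $C$-direction, which is exactly the regime you are trying to rule out). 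Consequently the arithmetic--geometric mean step fails there, and you cannot extract a bound for $F$ at the minimum that is independent of $\ve$; without that, letting $\ve\to0$ only yields $F\ge -C_\ve+\ve\log|s|^2_h$, which degenerates along $C$. This is the same reason the more naive route via Kodaira's lemma ($\chi-\ve R(h)\ge\delta\omega_0$) only bounds the volume form from below away from $C$. Finally, for the upper bound in (iii) you explicitly defer to ``Zhang's argument'' without supplying it, so that part is not proved either. In short: the two genuinely hard assertions --- the volume lower bound up to $C$ and the scalar curvature upper bound --- are respectively gapped and omitted; the correct course here, and the one the paper takes, is to cite \cite{Zha2}.
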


From Theorem \ref{ttz} we see that the metric $\omega(t)$ is uniformly bounded on compact subsets of $X \setminus C$.
It remains then to understand the behavior of the metric $\omega(t)$ in a neighborhood of $C$.

The local model for $X$ in a neighborhood $U$ of $C$ is given by a neighborhood of the $(-2)$-curve $D_0$ in the second Hirzebruch surface $M_{2,2}$.  Indeed $y_0$ is a double point of $X_{\textrm{can}}$, and the minimal resolution of singularities for surfaces is unique (see Theorems 6.1, 6.2 in \cite{BHPV}).   The blow-down map $\Phi$ restricted to $U$ can be identified with the map $\pi: M_{2,2} \rightarrow Y_{2,2}$ restricted to a neighborhood of $D_0$.  From now on, we make these identifications.

Hence there exists a bounded neighborhood $B$ of the origin in $\mathbb{C}^2$ and a $2$-to-$1$ map from $B \setminus \{ 0 \}$ to $U \setminus C$.  We will write $\omega=\omega(t)$ for the evolving K\"ahler metric pulled back via this map to $B \setminus \{0 \}$.  We wish to obtain bounds for $\omega(t)$ on $B \setminus \{ 0\}$.  As in Section \ref{secthirz} we will write $\oeuc$ for the Euclidean metric on $\mathbb{C}^2$.  Write $z_1,z_2$ for the coordinates on $B \subset \mathbb{C}^2$, and $r^2 = |z_1|^2 + |z_2|^2$.  We have the following result.

\pagebreak[3]
\begin{lemma} \label{lemmaest3}
There exist uniform constants $C$ and $\delta>0$ such that on $B\setminus\{0\}$,
\begin{enumerate}
\item[(i)] $\displaystyle{\omega \le \frac{C}{r^2} \, \omega_{\emph{Eucl}}.}$
\item[(ii)]$\displaystyle{\omega \le \frac{C}{r^{2(1-\delta)}} (\omega_0 + \omega_{\emph{Eucl}}),}$
\item[(iii)] $\displaystyle{
|W|^2_{g(t)} \le \frac{C}{r^{2/3}}}$ 
where $W = \sum_{i=1}^2 \left( \frac{x_i}{r} \frac{\partial}{\partial x_i} + \frac{y_i}{r} \frac{\partial}{\partial y_i} \right)$ the unit length radial vector field with respect to $g_{\emph{Eucl}}$, with $z_i = x_i + \sqrt{-1} y_i$.
\end{enumerate}
\end{lemma}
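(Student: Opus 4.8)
The plan is to reproduce the maximum-principle arguments of Lemma~\ref{lemmaest2} in the present local model, which is the case $n=k=2$, with two adjustments: the inputs from Lemma~\ref{basicestimates} are replaced by the uniform-in-time estimates of Zhang \cite{Zha2} quoted above (namely $\|\varphi\|_{L^\infty}\le C$ and $c\,\omega_0^2\le\omega^2\le C\,\omega_0^2$), and the extra $-\omega$ coming from the normalization in (\ref{krf1}) must be carried through each computation. The geometry is already in place: a neighbourhood $U$ of $C$ is identified with a neighbourhood of $D_0$ in $M_{2,2}$ and $\Phi|_U$ with $\pi\colon M_{2,2}\to Y_{2,2}$, so in this chart $\chi=\Phi^*\ofs$ is the pullback under $\pi$ of an orbifold K\"ahler form on $Y_{2,2}$; globally $\chi$ is a smooth semipositive $(1,1)$-form on $X$, degenerate precisely along $C$, because $K_X\cdot C=0$ forces $K_X=\Phi^*K_{\Xc}$ and hence $\chi$ is the curvature of a smooth metric on a genuine line bundle over $X$. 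Choosing a defining section $s$ of $[C]$ with Hermitian metric $h$ so that $|s|_h^2=r^4$ on a ball $B$ and $\hat{\omega}_t-cR(h)\ge\tfrac1C\omega_0$ on $X$ for all $t\ge0$ and $c$ small (by Lemma~\ref{lemmakahler} and a patching argument as in Lemma~\ref{lemmaoy}(iii), using $1-e^{-t}\ge\tfrac12$ for $t$ large), all the ingredients of the proof of Lemma~\ref{lemmaest2} are available.

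For (i) and (ii) I would apply the maximum principle to
\[
H_\varepsilon=\log\tr{\omega_0}{\omega}+A\log\bigl(|s|_h^{2(1+\varepsilon)/k}\tr{\oeuc}{\omega}\bigr)-A^2\varphi
\]
on $(\overline{B_{R_0}}\setminus\{0\})\times[0,t_0]$, which by (\ref{orbineqs}) tends to $-\infty$ at $C$ for each fixed $\varepsilon$ and which has boundary and initial values bounded uniformly by Zhang's estimates. At an interior maximum, the local identity (\ref{tr1}) — now carrying an additional favorable negative term produced by the $-\omega$ in (\ref{krf1}) — together with $\Delta\varphi=2-\tr{\omega}{\hat{\omega}_t}$ and the choice of $A$ large so that $A\hat{\omega}_{t_0}-\tfrac{1+\varepsilon}{k}R(h)\ge(C'+1)\omega_0$, yields $A^2\log(\omega^2/\Omega)+\tr{\omega}{\omega_0}\le C$ at that point; since $\tau\mapsto\log\tau+\kappa/\tau$ is bounded below this forces $\tr{\omega_0}{\omega}\le C$ there, and Zhang's volume bound $\omega^2\le C\omega_0^2$ then bounds $H_\varepsilon$ at that point, hence everywhere, uniformly in $\varepsilon$. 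Letting $\varepsilon\to0$ and invoking (\ref{orbineqs}) gives (i), and the $1/A$-power manipulation of the resulting bound for $(\tr{\omega_0}{\omega})^{1/A}\tr{\oeuc}{\omega}$, exactly as in Lemma~\ref{lemmaest2}, gives (ii).

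For (iii) I would use the holomorphic vector field $V=z^i\partial_{z^i}$ on $B_{R_0}\setminus\{0\}$, which satisfies $\tfrac1C r^{2k}\le|V|_{\omega_0}^2\le C r^{2k}$ by (\ref{hatg}), and consider
\[
Q_\varepsilon=\log\bigl(|V|_\omega^{2(1+\varepsilon)/k}\tr{\oeuc}{\omega}\bigr),
\]
which tends to $-\infty$ at $C$. The essential point in the normalized setting is that the term $-t$ that appeared in the analogous quantity in Lemma~\ref{lemmaest2}(iii) — and which would destroy uniformity of the bound over $[0,\infty)$ — is no longer needed: the normalization term $-\omega$ in (\ref{krf1}) supplies a uniform negative contribution, so that $(\partial_t-\Delta)Q_\varepsilon\le-c<0$ at an interior maximum, a contradiction. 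Hence $Q_\varepsilon$ is bounded above uniformly in $t$ and in $\varepsilon$; letting $\varepsilon\to0$ and using $|V|_\omega^2\le(\tr{\oeuc}{\omega})|V|_{\oeuc}^2=r^2\tr{\oeuc}{\omega}$ gives $|V|_\omega^{2(k+1)/k}\le C|V|_{\oeuc}^2\le C r^{2}$, i.e.\ $|V|_\omega^2\le C r^{2k/(k+1)}=C r^{4/3}$, and then $|W|_g^2\le\tfrac{2}{r^2}|V|_\omega^2\le C r^{-2/3}$, which is (iii).

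The main obstacle relative to Lemma~\ref{lemmaest2} is that the reference forms $\hat{\omega}_t=e^{-t}\omega_0+(1-e^{-t})\chi$ degenerate along $C$ as $t\to\infty$, so every estimate must be uniform over the infinite interval $[0,\infty)$ rather than over a fixed $[0,T)$. This is exactly where Zhang's uniform $L^\infty$ and volume-form bounds are indispensable — they replace the more elementary Lemma~\ref{basicestimates} — and where one must check that none of the constants produced by the maximum principle (in the inequality $\hat{\omega}_t-cR(h)\ge\tfrac1C\omega_0$ and in the choice of $A$) deteriorates as $t\to\infty$; the sign of the normalization term $-\omega$, far from being an obstruction, is what makes this $t$-uniformity work, most transparently in part (iii).
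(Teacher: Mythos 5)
Your proposal is correct and follows essentially the same route as the paper: run the maximum-principle arguments of Lemma \ref{lemmaest2} in the local model with $k=2$, substitute Zhang's uniform $L^\infty$ and volume bounds for Lemma \ref{basicestimates}, and use the extra $-1$ coming from the normalization term in the evolution inequality (\ref{tr2}) to discard the $-t$ in the quantity $Q_\ve$ of part (iii). The one point where you diverge is the uniform-in-$t$ positivity $\hat{\omega}_t - cR(h) \ge \frac{1}{C}\omega_0$: you derive it from the local model via Lemma \ref{lemmakahler} and a patching argument (which works but requires checking that $\chi$ dominates a multiple of $\pi^*\ddbar(r^4)$ near $C$), whereas the paper obtains it more cleanly from Kodaira's lemma, using that $[\chi]=[K_X]$ is big and nef so that $\chi-\ve R(h)>0$ for a suitable Hermitian metric $h$ on $[C]$ and small $\ve>0$.
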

\begin{proof}
The proof of this result is essentially the same as the proof of Lemma \ref{lemmaest2}.   Observe that $\omega_0$ on $B$ is equivalent to the metric $\hat{g}$ given by (\ref{hatg}) with $k=2$.  Hence by the same argument as in the proof of Lemma \ref{lemmahato} we obtain
$\omega_0 \le \frac{C}{r^2} \oeuc,$

  From the evolution equation (\ref{krf1}), the analogue of (\ref{tr1}) is
\begin{eqnarray} 
\left( \ddt{} - \Delta \right) \log \tr{\tilde{\omega}}{\omega} 
& \le &   \frac{1}{\tr{\tilde{\omega}}{\omega}  } \left( - \tr{\tilde{\omega}}{\omega} - g^{i \ov{j}} \tilde{R}_{i \ov{j}}^{ \ \ k \ov{\ell}} g_{k \ov{\ell}}\right) \le -1+ \tilde{C} \, \tr{\omega}{\tilde{\omega}}. \label{tr2}
\end{eqnarray}
For (i) and (ii), we evolve the same quantity as in the proof of Lemma \ref{lemmaest2}.  The only difference is that we replace the result of Lemma \ref{lemmaoy} by the following argument.   Since $[\chi]=[K_X]$ is big and nef, then by Kodaira's lemma (cf. \cite{Ts}, for example), there exists a Hermitian metric $h$ on the line bundle $[C]$ and a constant $\ve_0$ such that for all $0 < \ve \le \ve_0$,
 \begin{equation} \label{kod}
 \chi - \varepsilon R(h) >0.
 \end{equation}
It follows that there exists a constant $c>0$ and $\ve'>0$ such that
  \begin{equation} \label{kod2}
\hat{\omega}_t - \varepsilon' R(h) > c\, \omega_0,
 \end{equation}
 for all $t \ge 0$.  The rest of the argument for (i) and (ii) follows in the same way as in Lemma \ref{lemmaest2}.
 
 For (iii) we evolve the quantity $Q_{\varepsilon} = \log ( |V|_{\omega}^{(1+\varepsilon)} \tr{\oeuc}{\omega})$ and make use again of (\ref{tr2}) with $\tilde{\omega}= \oeuc$.  This finishes the proof of the lemma.
 \qed
\end{proof}

\begin{proof}[Proof of Theorem \ref{thmmin}]  This result follows from Theorem \ref{ttz} together with the radial bound given by Lemma \ref{lemmaest3}, part (iii).  Indeed, we can show that for every $\ve>0$ there exists $T=T(\ve)$ and $r = r({\ve})>0$ such that the diameter of a ball $B_r$ with respect to $g(t)$ for $t\ge T$ is less than $\ve$.  To see this note that we can find $T$  and $r$ such that
\begin{enumerate}
\item[(a)]  the diameter of $B_r \setminus B_{r/2}$ with respect to $g(t)$ for $t \ge T$ is less than $\ve/3$, since $g(t)$ converges to the K\"ahler-Einstein metric uniformly on compact subsets of $Y \setminus \{ y_0\}$; and
\item[(b)]  the length of any radial line in $B_r$ is less than $\ve/3$.
\end{enumerate}
For any $p, q \in B_r$ there exist $p', q' \in B_r \setminus B_{r/2}$ connected to $p$ and $q$ by radial lines.  Hence
\begin{equation}
d_{g(t)} (p,q) \le d_{g(t)}(p,p') + d_{g(t)}(p',q') + d_{g(t)}(q',q) < \ve,
\end{equation}
as required.

It is now a straightforward matter to prove the Gromov-Hausdorff convergence of $(X, g(t))$ to $(\Xc, g_{\textrm{KE}})$. \qed
\end{proof}

We end by remarking that there are plenty of examples of minimal surfaces of the type considered in Theorem \ref{thmmin}.  Indeed, let $C$ be a hyperelliptic curve defined by $y=f(x)$, where $f(x)$ has degree $n>4$ with $n$ distinct roots $p_1, ..., p_n$. Then $y\mapsto -y$ gives a $\mathbb{Z}_2$-action with fixed points as $(y=0, x=p_i)$ and those at $\infty$. Define $X = (C \times C) / \mathbb{Z}_2$, where $\mathbb{Z}_2$ acts diagonally (that is, $(-1)\cdot (z_1,z_2 )= ( (-1)\cdot z_1, (-1)\cdot z_2)$). Then $q_i=\{p_i\} \times \{p_i\}$ are the orbifold points of $X$ with structure group $\mathbb{Z}_2$.
 $X$ is an orbifold surface of general type, as can be seen by pulling back the K\"ahler-Einstein metric on $C\times C$.
Now consider $\pi: \hat X \rightarrow X$ the minimal resolution of $X$. Since each $q_i$ is an $A_1$ singularity, $\pi^{-1}(q_i)$ is an $A_1$ curve. $\hat X$ is a minimal surface of general type with only distinct irreducible $(-2)$ curves. 

\bigskip
\noindent
{\bf Acknowledgements} \  The authors thank Tom Ilmanen, Yuguang Zhang and Zhou Zhang for some helpful discussions.

\bigskip
\bigskip

$^{*}$ Department of Mathematics \\
Rutgers University, Piscataway, NJ 08854\\

$^{\dagger}$ Department of Mathematics \\
University of California San Diego, La Jolla, CA 92093


\begin{thebibliography}{99}
\bibitem[A]{A} Aubin, T.  {\em \'Equations du type Monge-Amp\`ere sur les vari\'et\'es k\"ahl\'eriennes compactes},  Bull. Sci. Math. (2) {\bf 102} (1978), no. 1, 63--95
\bibitem[BHPV] {BHPV} Barth, W. P., Hulek, K., Peters, C. A. M. and Van de Ven, A. {\em Compact complex surfaces.} Second edition. Ergebnisse der Mathematik und ihrer Grenzgebiete.  Springer-Verlag, Berlin
\bibitem[C]{C} Calabi, E. {\em  Extremal K\"ahler metrics}, in Seminar on Differential Geometry,  pp. 259--290, Ann. of Math. Stud., {\bf 102}, Princeton Univ. Press, Princeton, N.J., 1982
\bibitem[Cao]{Cao} Cao, H.-D. {\em Deformation of K\"ahler metrics to K\"ahler-Einstein metrics on compact
K\"ahler manifolds},  Invent. Math. {\bf 81} (1985),  no. 2, 359--372
\bibitem[CC]{CC} Cheeger, J. and Colding, T. H. {\em On the structure of spaces with Ricci curvature bounded below. I}, J. Differential Geom. {\bf 45} (1997), 406--480
\bibitem[CW]{CW} Chen, X. and Wang, B. {\em K\"ahler-Ricci flow on Fano manifolds (I)}, preprint, arXiv: 0909.2391
\bibitem[EGZ]{EGZ} Eyssidieux, P., Guedj, V. and Zeriahi, A. {\em Singular K\"ahler-Einstein metrics}. J. Amer. Math. Soc. {\bf 22} (2009), no. 3, 607--639
\bibitem[FIK]{FIK} Feldman, M., Ilmanen, T. and Knopf, D. {\em Rotationally symmetric shrinking and expanding gradient K\"ahler-Ricci solitons},  J. Differential Geometry {\bf 65}  (2003),  no. 2, 169--209
\bibitem[H]{H} Hamilton, R. S. {\em Four-manifolds with positive isotropic curvature},  Comm. Anal. Geom.  {\bf 5}  (1997),  no. 1, 1--92
\bibitem[Kob]{Kob} Kobayashi, R., {\em Einstein-K\"ahler V-metrics on open Satake V-surfaces
with isolated quotient singularities}, Math. Ann. {\bf 272} (1985), no. 3, 385--398
\bibitem[Kol1]{Kol1} Ko{\l}odziej, S. {\em The complex Monge-Amp\`ere equation}, Acta Math. {\bf 180} (1998), no. 1, 69--117
\bibitem[Kol2]{Kol2} Ko{\l}odziej, S. {\em The Monge-Amp\`ere equation on compact K\"ahler manifolds}, Indiana Univ. Math. J. 52 (2003), no. 3, 667--686
\bibitem[LT]{LT} La Nave, G. and Tian, G. {\em Soliton-type metrics and K\"ahler-Ricci flow on symplectic quotients}, preprint, arXiv: 0903.2413.
\bibitem[MS]{MS} Munteanu, O. and Sz\'ekelyhidi, G. {\em On convergence of the K\"ahler-Ricci flow}, preprint, arXiv: 0904.3505
\bibitem[P1]{P1} Perelman, G. {\em The entropy formula for the Ricci flow and its geometric applications},
preprint, arXiv: math.DG/0211159
\bibitem[P2]{P2} Perelman, G. unpublished work on the K\"ahler-Ricci flow
\bibitem[PSSW]{PSSW} Phong, D.H., Song, J., Sturm, J. and Weinkove, B. {\em The K\"ahler-Ricci flow and the $\bar\partial$ operator on vector fields}, J. Differential Geometry {\bf 81} (2009), no. 3, 631--647
\bibitem[PS]{PS} Phong, D.H. and Sturm, J.  {\em On stability and the convergence of the K\"ahler-Ricci flow},  J. Differential Geometry  {\bf 72} (2006),  no. 1, 149--168
\bibitem[SeT]{SeT} Sesum, N. and Tian, G. {\em Bounding scalar curvature and diameter along the K\"ahler Ricci flow (after Perelman)},  J. Inst. Math. Jussieu  {\bf 7}  (2008),  no. 3, 575--587
\bibitem[So]{So} Song, J. {\em Finite time extinction of the K\"ahler-Ricci flow}, preprint, arXiv: 0905.0939
\bibitem[SSW]{SSW}, Song, J., Sz\'ekelyhidi, G. and Weinkove, B. {\em The K\"ahler-Ricci flow on projective bundles}, to appear in Int. Math. Res. Not.
\bibitem[SoT1]{SoT1} Song, J, and Tian, G. {\em The K\"ahler-Ricci flow on surfaces of positive Kodaira dimension},
Invent. Math. {\bf 170} (2007), no. 3, 609--653
\bibitem[SoT2]{SoT2} Song, J, and Tian, G. {\em Canonical measures and K\"ahler-Ricci flow}, preprint, arXiv: 0802.2570
\bibitem[SoT3]{SoT3} Song, J, and Tian, G. {\em  The K\"ahler-Ricci flow through singularities}, preprint, arXiv: 0909.4898
\bibitem[SW1]{SW1} Song, J. and Weinkove, B. {\em The K\"ahler-Ricci flow on Hirzebruch surfaces}, to appear in J. Reine Angew. Math., arXiv: 0903.1900
\bibitem[SW2]{SW2} Song, J. and Weinkove, B.  {\em Contracting exceptional divisors by the K\"ahler-Ricci flow}, to appear in Duke Math. J., arXiv:1003.0718
\bibitem[Sz]{Sz} Sz\'ekelyhidi, G. {\em The K\"ahler-Ricci flow and K-polystability},  Amer. J. Math. {\bf 132} (2010), no. 4, 1077--109
\bibitem[T]{T} Tian, G. {\em New results and problems on K\"ahler-Ricci flow}, G\'eom\'etrie diff\'erentielle, physique math\'ematique, math\'ematiques et soci\'et\'e. II.  Ast\'erisque  {\bf 322}  (2008), 71--92
\bibitem[TZha]{TZha} Tian, G. and Zhang, Z. {\em On the K\"ahler-Ricci flow on projective manifolds of general type},  Chinese Ann. Math. Ser. B  {\bf 27}  (2006),  no. 2, 179--192
\bibitem[TZhu]{TZhu} Tian, G. and Zhu, X. {\em Convergence of K\"ahler-Ricci flow}, J. Amer. Math. Soc. {\bf 20} (2007), no. 3, 675--699
\bibitem[To]{To} Tosatti, V. {\em K\"ahler-Ricci flow on stable Fano manifolds},  J. Reine Angew. Math. {\bf 640} (2010), 67--84
\bibitem[Ts]{Ts} Tsuji, H. {\em Existence and degeneration of K\"ahler-Einstein metrics on minimal algebraic varieties of general type}, Math.
Ann. {\bf 281} (1988), 123--133
\bibitem[Y]{Y1} Yau, S.-T. {\em On the Ricci curvature of a compact K\"ahler
manifold and the complex Monge-Amp\`ere equation, I}, Comm. Pure
Appl. Math. {\bf 31} (1978), 339--411
\bibitem[Zha1]{Zha1} Zhang, Z. {\em On degenerate Monge-Amp\`ere equations over closed K\"ahler manifolds},   Int. Math. Res. Not. {\bf 2006}, Art. ID 63640, 18 pp
\bibitem[Zha2]{Zha2} Zhang, Z. {\em Scalar curvature bound for K\"ahler-Ricci flows over minimal manifolds of general type},  Int. Math. Res. Not. IMRN  {\bf 2009},  no. 20, 3901--3912
\bibitem[Zha3]{Zha3} Zhang, Z. {\em Ricci lower bound for K\"ahler-Ricci flow}, preprint, arXiv: 1110.5954
\bibitem[Zhu]{Zhu} Zhu, X. {\em K\"ahler-Ricci flow on a toric manifold with positive first Chern class},
preprint,  arXiv:math.DG/0703486
\end{thebibliography}
\end{document}